\numberwithin{equation}{section}
\newtheorem{thm}{Theorem}[section]
\newtheorem{lem}[thm]{Lemma}
\newtheorem{cor}[thm]{Corollary}
\newtheorem{prop}[thm]{Proposition}
\theoremstyle{definition}
\newtheorem{defn}[thm]{Definition}
\newtheorem{ques}[thm]{Question}
\def\zlstar{{z{\bf L}^*}}
\def\Fz{(I-\zlstar)^{-1}}
\def\Fzplus{(I+\zlstar)}
\def\Fzminus{(I-\zlstar)}
\def\Fw{(I-{{\bf L}w^*})^{-1}}
\def\Fwplus{(I+{{\bf L}w^*})}
\def\Fwminus{(I-{{\bf L}w^*})}
\def\L{{\bf L}}
\def\Fd{\mathbb F_d^+}
\def\nn{\mathbf n}
\def\mm{\mathbf m}
\def\pp{\mathbf p}
\def\abar{{\alpha^*}}
\begin{document}
\title{Clark theory in the Drury-Arveson space}
\author{Michael T. Jury}
\address{Department of Mathematics,
        University of Florida, 
        Gainesville, Florida 32611-8105}
\email{mjury@math.ufl.edu}
\thanks{Partially supported by NSF grant DMS-1101461}
\thanks{{\emph{Keywords:} Aleksandrov-Clark measure, row contraction, de Branges-Rovnyak space, Gleason problem}}
\thanks{2000 \emph{Mathematics subject classification:} 46E22, 47A20, 47B32}
\date{\today}

\begin{abstract}We extend the basic elements of Clark's theory of
  rank-one perturbations of backward shifts, to row-contractive
  operators associated to de Branges-Rovnyak type spaces $\mathcal
  H(b)$ contractively contained in the Drury-Arveson space on the unit
  ball in $\mathbb C^d$. The Aleksandrov-Clark
  measures on the circle are replaced by a family of states on a certain
  noncommutative operator system, and the backward shift is replaced
  by a canonical solution to the Gleason problem in $\mathcal
  H(b)$. In addition we introduce the notion of a ``quasi-extreme''
  multiplier of the Drury-Arveson space and use it to characterize
  those $\mathcal H(b)$ spaces that are invariant under multiplication
  by the coordinate functions.
\end{abstract}
\maketitle

\section{Introduction} The purpose of this paper is to provide one
method of extending the elementary portions of Clark's theory of
rank-one perturbations of backward shifts, to the Drury-Arveson space
$H^2_d$ of the unit ball $\mathbb B^d \subset\mathbb C^d$. This is the
space of functions holomorphic in $\mathbb B^d$ with reproducing
kernel
\begin{equation}
k(z,w)=  \frac{1}{1-zw^*}.
\end{equation}
(Here $zw^*=\sum_{j=1}^d z_jw_j^*$ is the standard Hermitian inner
product in $\mathbb C^d$.) When $d=1$, this is of course the usual
Hardy space $H^2$ in the unit disk. When $d>1$, the space $H^2_d$ is
an analytic Besov space, but is in many ways a more appropriate
higher-dimensional analog of $H^2$ than the classical Hardy space in
the ball (which has the kernel $s(z,w)=(1-{zw^*})^{-d}$). The recent
survey \cite{Sha} provides an overview.

To begin with we explain what is meant by the ``elementary portions of
Clark's theory;'' our treatment is heavily influenced by the
exposition of Sarason \cite{Sar-book} and the treatment of
Aleksandrov-Clark measures in \cite[Chapter 9]{CimMatRos}.  In
particular we take a point of view in which the de Branges-Rovnyak
spaces are central.  Let $b$ be a non-constant function analytic in
the unit disk $\mathbb D\subset \mathbb C$ and bounded by $1$
there. (In Clark's original treatment \cite{Cla} $b$ was assumed to be
an inner function; that is, $|b|=1$ almost everywhere on the unit
circle.) For this discussion we impose the simplifying normalization
$b(0)=0$. Associated to $b$ is a reproducing kernel Hilbert space
$\mathcal H(b)$, with kernel
\begin{equation}
  k^b(z,w)=\frac{1-b(z)b(w)^*}{1-zw^*}
\end{equation}
where $*$ denotes the complex conjugate. (In the inner case, $\mathcal
H(b)$ is isometrically the orthogonal complement of the Beurling
subspace $bH^2$.)  We call $\mathcal H(b)$ the {\em deBranges-Rovnyak
  space} associated to $b$; as a set it is contained in $H^2$ and for
each $f\in \mathcal H(b)$ we have $\|f\|_{H^2}\leq \|f\|_{\mathcal
  H(b)}$, so that $\mathcal H(b)$ is contractively contained in
$H^2$. The basic theory of $\mathcal H(b)$ spaces may be found in the
original book of de Branges and Rovnyak \cite{dBR-book}, and the
monograph of Sarason \cite{Sar-book}.

An important feature of the $\mathcal H(b)$ spaces is that they are
invariant under the backward shift operator
\begin{equation}
  S^*f(z) := \frac{f(z)-f(0)}{z}
\end{equation}
Indeed, this is a chief reason for interest in the $\mathcal H(b)$
spaces; the operators $X:=S^*|_{\mathcal H(b)}$ (and their analogs on
the vector-valued $\mathcal H(b)$ spaces) serve as functional models
for contractive operators on Hilbert space, see for example
\cite{BalKri,NikVas}.  We also note that, while $b$ itself may or may
not lie in $\mathcal H(b)$, it is always the case that $S^*b$ belongs
to $\mathcal H(b)$ \cite[II-8]{Sar-book}.

By the Herglotz formula, for each unimodular scalar $\alpha$ there is
a unique finite, positive measure $\mu_\alpha$ on the unit circle
$\mathbb T$ such that
\begin{equation}
 \frac{1+\alpha^*b(z)}{1-\alpha^*b(z)}=\int_{\mathbb T} \frac{1+z\zeta^*}{1-z\zeta^*}\, d\mu_\alpha(\zeta)
\end{equation}
The measures $\{\mu_\alpha \}_{\alpha\in\mathbb T}$ are called the
{\em Aleksandrov-Clark measure} or {\em AC measures} for $b$. We refer
to \cite[Chapter 9]{CimMatRos} for a discussion of their properties. Let
$P^2(\mu)$ denote the closure of the analytic polynomials in
$L^2(\mu)$.

By the ``elementary portions of Clark's theory'' we mean the following
three theorems adapted from \cite{Cla}.
\begin{thm}\label{thm:clark1}
For each $\alpha\in\mathbb T$, the formula
  \begin{equation}\label{eqn:cauchy-transform-def}
    (\mathcal V_\alpha f)(z) =(1-\alpha^*b(z))\int_{\mathbb T} \frac{f(\zeta)}{1-z\zeta^*}\,d\mu_\alpha(\zeta)
  \end{equation}
defines a unitary operator from $P^2(\mu)$ onto $\mathcal H(b)$. \cite[III-7]{Sar-book}
\end{thm}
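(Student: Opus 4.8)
The plan is to use the reproducing-kernel method: locate a family of functions that spans a dense subspace of $P^2(\mu_\alpha)$ and on which $\mathcal V_\alpha$ can be evaluated in closed form, show that $\mathcal V_\alpha$ carries this family isometrically onto (nonzero scalar multiples of) the reproducing kernels of $\mathcal H(b)$, and then conclude with general facts about isometries having dense range.

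First I would extract two facts from the normalization $b(0)=0$ and the Herglotz relation defining $\mu_\alpha$. Evaluating that relation at $z=0$ forces $\mu_\alpha(\mathbb T)=1$, so $\mu_\alpha$ is a probability measure; and since $\frac{1+z\zeta^*}{1-z\zeta^*}=\frac{2}{1-z\zeta^*}-1$, the relation rearranges to the Cauchy-transform identity
\[
\int_{\mathbb T}\frac{d\mu_\alpha(\zeta)}{1-z\zeta^*}=\frac{1}{1-\alpha^*b(z)},\qquad z\in\mathbb D .
\]
I would also note that $(\mathcal V_\alpha f)(z)$ is a well-defined holomorphic function of $z\in\mathbb D$ for every $f\in L^2(\mu_\alpha)$, since for $z$ in a compact subset of $\mathbb D$ the kernel $(1-z\zeta^*)^{-1}$ is bounded on $\mathbb T$, so the integral converges, depends holomorphically on $z$, and satisfies $|(\mathcal V_\alpha f)(z)|\le C_z\,\|f\|_{L^2(\mu_\alpha)}$ for each fixed $z$.

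The heart of the argument is the choice of test functions $h_\lambda(\zeta):=(1-\lambda^*\zeta)^{-1}$ for $\lambda\in\mathbb D$. On $\mathbb T$ each $h_\lambda$ is a uniformly convergent power series in $\zeta$, hence is a limit of analytic polynomials and lies in $P^2(\mu_\alpha)$; moreover, if $g\in P^2(\mu_\alpha)$ is orthogonal to every $h_\lambda$, then expanding $\overline{h_\lambda}$ in a geometric series in $\zeta^*$ gives $\int g(\zeta)(\zeta^*)^n\,d\mu_\alpha=0$ for all $n\ge 0$, i.e.\ $g\perp P^2(\mu_\alpha)$, so $g=0$; thus $\overline{\operatorname{span}}\{h_\lambda:\lambda\in\mathbb D\}=P^2(\mu_\alpha)$. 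Next I would compute $\mathcal V_\alpha h_\lambda$ explicitly: using on $\mathbb T$ the elementary identity
\[
\frac{1}{(1-\lambda^*\zeta)(1-z\zeta^*)}=\frac{1}{1-z\lambda^*}\left(\frac{1}{1-\lambda^*\zeta}+\frac{z\zeta^*}{1-z\zeta^*}\right),
\]
integrating against $d\mu_\alpha$, and substituting the Cauchy-transform identity above (and its conjugate, evaluated at $\lambda$) for the two resulting integrals, one should obtain
\[
(\mathcal V_\alpha h_\lambda)(z)=\frac{1}{1-\alpha b(\lambda)^*}\cdot\frac{1-b(z)b(\lambda)^*}{1-z\lambda^*}=\frac{1}{1-\alpha b(\lambda)^*}\,k^b_\lambda(z),
\]
where $k^b_\lambda=k^b(\cdot,\lambda)$. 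In particular $\mathcal V_\alpha$ maps $\operatorname{span}\{h_\lambda\}$ into $\mathcal H(b)$.

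Given this formula the rest is routine, and it is also where I expect the only real friction. For surjectivity: $b$ is non-constant and bounded by $1$, so $|b(\lambda)|<1$ on $\mathbb D$ and the scalars $1-\alpha b(\lambda)^*$ are nonzero; hence the range of $\mathcal V_\alpha$ contains every $k^b_\lambda$, and since these span a dense subspace of $\mathcal H(b)$ and the range of an isometry is closed, $\mathcal V_\alpha$ is onto once shown isometric. For isometry: from the displayed formula and $\langle k^b_\lambda,k^b_\mu\rangle_{\mathcal H(b)}=k^b(\mu,\lambda)$ one gets $\langle\mathcal V_\alpha h_\lambda,\mathcal V_\alpha h_\mu\rangle_{\mathcal H(b)}=\big((1-\alpha b(\lambda)^*)(1-\alpha^*b(\mu))\big)^{-1}k^b(\mu,\lambda)$, while $\langle h_\lambda,h_\mu\rangle_{L^2(\mu_\alpha)}=\int_{\mathbb T}(1-\lambda^*\zeta)^{-1}(1-\mu\zeta^*)^{-1}\,d\mu_\alpha(\zeta)$ is exactly the integral evaluated in the previous step with $z=\mu$; the two agree. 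So $\mathcal V_\alpha$ is isometric on the dense subspace $\operatorname{span}\{h_\lambda\}$ and extends to an isometry of $P^2(\mu_\alpha)$ into $\mathcal H(b)$; a brief approximation argument (norm convergence in the reproducing-kernel space $\mathcal H(b)$ implies pointwise convergence, and the integral formula is pointwise continuous in $f$) identifies the extension with the original formula, and closed dense range then makes it unitary. The step I expect to be the main obstacle is the closed-form evaluation of $\mathcal V_\alpha h_\lambda$: it is the only place where the defining property of $\mu_\alpha$ and the normalization $b(0)=0$ genuinely enter, and it demands careful bookkeeping with the conjugation $*$, since $\mu_\alpha$ lives on $\mathbb T$ where $\zeta^*=\zeta^{-1}$ and one must keep the analytic Cauchy kernels $(1-\lambda^*\zeta)^{-1}$ strictly separate from the anti-analytic ones $(1-z\zeta^*)^{-1}$; everything else is soft.
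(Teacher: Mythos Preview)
Your proof is correct and follows essentially the same route as the paper's own argument (the paper does not prove Theorem~\ref{thm:clark1} directly, but proves its noncommutative analog, Theorem~\ref{thm:Vmu}, by exactly this reproducing-kernel method, and remarks that the proof reduces to Sarason's when $d=1$). Your test functions $h_\lambda(\zeta)=(1-\lambda^*\zeta)^{-1}$ are precisely the one-variable incarnation of the paper's $G_w=(1-b(w)^*)(I-{\bf L}w^*)^{-1}$ (up to the harmless scalar $(1-\alpha b(\lambda)^*)^{-1}$), and your partial-fraction identity on $\mathbb T$ together with the Cauchy-transform evaluation is the $d=1$ specialization of the paper's Proposition~\ref{prop:NC-fantappie}; the density and isometry steps then match those in the proof of Theorem~\ref{thm:Vmu}.
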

To go further we assume that $b$ is an extreme point of the unit ball
of $H^\infty(\mathbb D)$; this is the case if and only if
$\int_{\mathbb T} \log(1-|b(\zeta)|)\, dm(\zeta)=-\infty$.  Since the
Radon-Nikodym derivative of the measure $\mu_\alpha$ is
\begin{equation}
\frac{d\mu_\alpha}{dm}(\zeta) = \frac{1-|b(\zeta)|^2}{|1-\alpha^*b(\zeta)|^2},  
\end{equation}
\cite[Proposition 9.1.14]{CimMatRos}, it follows that $\int_{\mathbb T} \log\left(\frac{d\mu_\alpha}{dm}\right)\,
dm=-\infty$. Thus by Szeg\H{o}'s theorem
$P^2(\mu_\alpha)=L^2(\mu_\alpha)$ for each $\alpha$, when $b$ is
extreme. In particular, in this case (and only this case) the isometry
$M_\zeta$ acting on $P^2(\mu_\alpha)$ is unitary. 
\begin{thm}\label{thm:clark2}
  Let $b$ be extreme and let $X$ denote the backward shift operator
  restricted to $\mathcal H(b)$.  Then for each $\alpha$ the rank-one
  perturbation
  \begin{equation}
    U_\alpha^*:=X+\alpha^* S^*b\otimes 1
  \end{equation}
defines a unitary operator $U_\alpha$. Each of these unitaries is
cyclic (with cyclic vector $1$), and the spectral measure of
$U_\alpha$ with respect to $1$ is the AC measure $\mu_\alpha$.
Moreover the operator $\mathcal V_\alpha$ implements the spectral
resolution of $U_\alpha$:
\begin{equation}
  U_\alpha \mathcal V_\alpha = \mathcal V_\alpha M_\zeta
\end{equation}
where $M_\zeta$ denotes multiplication by the independent variable in
$L^2(\mu_\alpha)$.\cite[III-8]{Sar-book}
\end{thm}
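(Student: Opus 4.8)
I need to prove Theorem~\ref{thm:clark2}: that $U_\alpha^* = X + \alpha^* S^*b \otimes 1$ is the adjoint of a unitary $U_\alpha$, that $U_\alpha$ is cyclic with cyclic vector $1$, that its spectral measure at $1$ is $\mu_\alpha$, and that $\mathcal V_\alpha$ intertwines $U_\alpha$ and $M_\zeta$. The cleanest route is to let $\mathcal V_\alpha$ do all the work: by Theorem~\ref{thm:clark1}, $\mathcal V_\alpha : L^2(\mu_\alpha) \to \mathcal H(b)$ is unitary (since $b$ is extreme, $P^2(\mu_\alpha) = L^2(\mu_\alpha)$), and on $L^2(\mu_\alpha)$ the operator $M_\zeta$ is unitary with cyclic vector $1$ (the constant function, which is the image of... ) and spectral measure $\mu_\alpha$. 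So if I can show $\mathcal V_\alpha M_\zeta \mathcal V_\alpha^* = U_\alpha$ — equivalently that $U_\alpha^* = \mathcal V_\alpha M_{\bar\zeta} \mathcal V_\alpha^*$ agrees with $X + \alpha^* S^*b \otimes 1$ — then every other assertion follows by transport of structure: $U_\alpha$ is unitary, $1 = \mathcal V_\alpha 1$ is cyclic, and the spectral measure of $U_\alpha$ at $1$ equals that of $M_\zeta$ at $1$, namely $\mu_\alpha$.

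Wait — I should double-check what $\mathcal V_\alpha 1$ is. From \eqref{eqn:cauchy-transform-def}, $(\mathcal V_\alpha 1)(z) = (1-\alpha^* b(z)) \int_{\mathbb T} (1-z\zeta^*)^{-1}\, d\mu_\alpha(\zeta)$, and by the Herglotz representation this Cauchy integral is $\tfrac12\left(\mu_\alpha(\mathbb T) + \tfrac{\alpha^* b(z)}{1-\alpha^* b(z)} \cdot 2\right)$-type expression; carrying out the algebra should give $(\mathcal V_\alpha 1)(z) = \tfrac{1 + \text{(const)}}{1 - \alpha^* b(z)}\cdot(1 - \alpha^* b(z)) = $ a constant times... in fact the standard computation yields $\mathcal V_\alpha 1 = k^b_0$-normalized, i.e.\ a constant function (using $b(0)=0$, one gets $\mathcal V_\alpha 1 \equiv c$). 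I would record this lemma first, as it identifies the cyclic vector of $U_\alpha$ as the constant function $1 \in \mathcal H(b)$, matching the theorem statement.

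**The main computation.** The crux is verifying the intertwining relation at the level of $\mathcal H(b)$. The plan is: fix $g = \mathcal V_\alpha f \in \mathcal H(b)$ and compute $U_\alpha^* g = \mathcal V_\alpha (M_{\bar\zeta} f)$ directly, i.e.\ show
\[
(X g)(z) + \alpha^* (S^*b)(z)\, \langle g, 1\rangle_{\mathcal H(b)}
= (1-\alpha^* b(z)) \int_{\mathbb T} \frac{\bar\zeta f(\zeta)}{1-z\zeta^*}\, d\mu_\alpha(\zeta).
\]
The right-hand side I massage by writing $\tfrac{\bar\zeta}{1-z\zeta^*} = \tfrac1z\left(\tfrac{1}{1-z\zeta^*} - 1\right)$ when $z \ne 0$ (a standard telescoping identity for the Cauchy kernel on $\mathbb T$), which splits it into $\tfrac1z (\mathcal V_\alpha f)(z)/(1-\alpha^*b(z)) \cdot (1-\alpha^*b(z)) - \tfrac1z \int f\, d\mu_\alpha$ times $(1-\alpha^*b(z))$; the first piece is essentially $\tfrac{g(z)}{z}$ up to the $b$-correction, and the second involves the constant $\int f\, d\mu_\alpha$, which by the unitarity of $\mathcal V_\alpha$ and the previous lemma equals a multiple of $\langle g, 1\rangle_{\mathcal H(b)}$. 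Collecting the $f(0)$- and $b$-terms and using $S^*b \in \mathcal H(b)$ (noted in the excerpt, \cite[II-8]{Sar-book}) should make the two sides coincide. The correction term $(1 - \alpha^* b(z))$ acting against $\tfrac1z(1 - \tfrac{1}{1-\alpha^*b(z)})$-type expressions is exactly what produces the $\alpha^* (S^*b)(z)$ summand, since $S^*b(z) = b(z)/z$.

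**The main obstacle.** The genuinely delicate point is not the formal algebra but justifying that the constant appearing when one pulls $\int_{\mathbb T} f\, d\mu_\alpha$ out of the Cauchy transform is correctly identified with $\langle g, 1 \rangle_{\mathcal H(b)}$ — this requires knowing $\mathcal V_\alpha^* 1$ explicitly (or equivalently $\langle \mathcal V_\alpha f, 1\rangle_{\mathcal H(b)} = \langle f, \mathcal V_\alpha^* 1\rangle_{L^2(\mu_\alpha)}$), and then checking that $\mathcal V_\alpha^* 1$ is (a multiple of) the constant function $1 \in L^2(\mu_\alpha)$. This is where the normalization $b(0)=0$ and the precise form of the Herglotz/Radon–Nikodym data enter, and where sign/constant bookkeeping with the factor $\alpha^*$ must be done carefully. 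A secondary point to handle cleanly is the behavior at $z=0$: the identity $\tfrac{\bar\zeta}{1-z\zeta^*} = \tfrac1z(\tfrac{1}{1-z\zeta^*}-1)$ degenerates there, so I would either argue by analytic continuation (both sides are holomorphic in $\mathbb B^d$... here $d=1$) or evaluate the $z=0$ case separately, noting $(Xg)(0) = (S^*g)(0) = g'(0)$-type value and matching it against the $z=0$ value of the right-hand side. Once the intertwining is established, cyclicity of $1$ for $M_\zeta$ in $L^2(\mu_\alpha)$ (immediate from $P^2(\mu_\alpha) = L^2(\mu_\alpha)$ together with the fact that $M_{\bar\zeta} = M_\zeta^{-1}$ is available, so polynomials in $\zeta, \bar\zeta$ are dense) and the identification of the spectral measure as $\mu_\alpha$ are standard and transport directly through the unitary $\mathcal V_\alpha$.
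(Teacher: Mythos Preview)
The paper does not itself prove Theorem~\ref{thm:clark2}; it is quoted as a classical one-variable result with a citation to \cite[III-8]{Sar-book}. Your strategy---reduce everything to the intertwining $U_\alpha^*\mathcal V_\alpha=\mathcal V_\alpha M_{\bar\zeta}$ and then transport unitarity, cyclicity, and the spectral measure through $\mathcal V_\alpha$---is correct and is essentially Sarason's argument. The algebra you sketch goes through cleanly once you observe (using $b(0)=0$) that $g(0)=(1-\alpha^*b(0))\int f\,d\mu_\alpha=\int f\,d\mu_\alpha$ directly from the definition of $\mathcal V_\alpha$, and that $k^b_0\equiv 1$ so $g(0)=\langle g,1\rangle_{\mathcal H(b)}$. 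This sidesteps your worry about computing $\mathcal V_\alpha^*1$; the constant you need is simply $g(0)$, and the two singular $1/z$ terms cancel exactly, leaving $(Xg)(z)+\alpha^*(S^*b)(z)\,g(0)$. The $z=0$ case is then automatic by analyticity.

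For comparison, the paper's proof of the multivariable analogue (Theorem~\ref{thm:clark-ball}) takes a different tack: rather than computing $\mathcal V_\alpha M_{\bar\zeta}$ on an arbitrary $f$, it verifies the intertwining on the dense span of the ``NC Szeg\H{o} kernels'' $G_w^\alpha=(1-\alpha b(w)^*)(I-{\bf L}w^*)^{-1}$, which $\mathcal V_\alpha$ sends to the reproducing kernels $k^b_w$. One then checks $\mathcal V_\alpha S_j^{\alpha*}G_w^\alpha=(X_j+\text{rank-one})k^b_w$ by a direct algebraic computation on both sides. In one variable this specializes to checking the identity on the Cauchy kernels $(1-\bar w\zeta)^{-1}$ rather than on general $f$; the advantage is that the kernel identity $\bar\zeta(1-\bar w\zeta)^{-1}$ is handled term-by-term in a power series and no $1/z$ singularity appears. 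Your approach and the paper's are equivalent in content but organized differently: yours follows the integral-transform manipulation of the classical source, while the paper's is set up to generalize to the noncommutative Fock-space kernels.
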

Finally, one can say something about eigenvalues of $U_\alpha$:
\begin{thm}\label{thm:clark3}

  The number $\zeta$ is an eigenvalue of $U_\alpha$ if and only if $b$
  has finite angular derivative at $\zeta$ with $b(\zeta)=\alpha$; in
  this case the eigenspace is one-dimensional and spanned by the
  function
  \begin{equation}
    k^b_\zeta(z):= \frac{1-\alpha^*b(z)}{1-z\zeta^*}
  \end{equation}
\cite[Theorem 8.9.9]{CimMatRos}
\end{thm}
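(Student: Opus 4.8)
The plan is to turn the eigenvalue equation $U_\alpha g=\zeta g$ into an explicit first-order functional equation, solve it, and then observe that membership of the solution in $\mathcal H(b)$ is precisely a Julia--Carathéodory condition. Since $b(0)=0$, the constant function $1$ is the reproducing kernel $k^b_0$, so $\langle f,1\rangle_{\mathcal H(b)}=f(0)$ for every $f\in\mathcal H(b)$, and therefore
\[
  U_\alpha^* f(z)=(S^*f)(z)+\alpha^* f(0)\,(S^*b)(z)=\frac{f(z)-f(0)\bigl(1-\alpha^* b(z)\bigr)}{z}.
\]
Because $U_\alpha$ is unitary (Theorem~\ref{thm:clark2}), its eigenvalues lie on $\mathbb T$, and $g$ is an eigenvector of $U_\alpha$ at $\zeta\in\mathbb T$ if and only if $U_\alpha^* g=\zeta^* g$. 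Clearing the denominator $z$ in that equation gives $g(z)(1-z\zeta^*)=g(0)\bigl(1-\alpha^* b(z)\bigr)$; if $g(0)=0$ then $g\equiv0$, and otherwise we may normalize $g(0)=1$ and conclude $g=k^b_\zeta$ with $k^b_\zeta(z)=\frac{1-\alpha^* b(z)}{1-z\zeta^*}$. A one-line substitution back into the formula for $U_\alpha^*$ verifies the converse: $U_\alpha^* k^b_\zeta=\zeta^* k^b_\zeta$ as soon as $k^b_\zeta\in\mathcal H(b)$. This already shows the eigenspace is at most one-dimensional and spanned by $k^b_\zeta$, and reduces the theorem to the claim that $k^b_\zeta\in\mathcal H(b)$ if and only if $b$ has a finite angular derivative at $\zeta$ with $b(\zeta)=\alpha$.

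For the direction ``finite angular derivative $\Rightarrow k^b_\zeta\in\mathcal H(b)$'' I would use that a finite angular derivative at $\zeta$ keeps $\|k^b_{r\zeta}\|_{\mathcal H(b)}^2=k^b_{r\zeta}(r\zeta)=\frac{1-|b(r\zeta)|^2}{1-r^2}$ bounded as $r\uparrow1$ while $b(r\zeta)\to b(\zeta)=\alpha$. A weakly convergent subsequence $k^b_{r_n\zeta}\rightharpoonup g\in\mathcal H(b)$ then satisfies $g(w)=\lim_n\langle k^b_{r_n\zeta},k^b_w\rangle=\lim_n k^b_{r_n\zeta}(w)=k^b_\zeta(w)$ for every $w$, so $k^b_\zeta=g\in\mathcal H(b)$.

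The converse is the crux. Assuming $k^b_\zeta\in\mathcal H(b)$, I would apply the Cauchy--Schwarz inequality to $\langle k^b_\zeta,k^b_w\rangle=k^b_\zeta(w)$ along $w=r\zeta$ to get
\[
  \frac{|1-\alpha^* b(r\zeta)|^2}{(1-r)^2}\ \le\ \|k^b_\zeta\|_{\mathcal H(b)}^2\,\frac{1-|b(r\zeta)|^2}{1-r^2}.
\]
Using $|1-\alpha^* b(r\zeta)|\ge 1-|b(r\zeta)|$ and $|b(r\zeta)|<1$ (maximum principle, $b$ non-constant), dividing through by $\frac{1-|b(r\zeta)|}{1-r}>0$ gives $\frac{1-|b(r\zeta)|}{1-r}\le 2\|k^b_\zeta\|_{\mathcal H(b)}^2$ for all $r\in(0,1)$. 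Hence $\liminf_{z\to\zeta}\frac{1-|b(z)|}{1-|z|}<\infty$, so by the Julia--Carathéodory theorem $b$ has a finite angular derivative at $\zeta$ with a unimodular nontangential value $b(\zeta)$; feeding the boundedness back into the displayed inequality forces $|1-\alpha^* b(r\zeta)|^2\to0$, whence $\alpha^* b(r\zeta)\to1$ and $b(\zeta)=\alpha$.

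The main obstacle is exactly this last implication: extracting the quantitative Julia--Carathéodory hypothesis from the bare fact that $k^b_\zeta$ belongs to $\mathcal H(b)$. The reproducing-kernel Cauchy--Schwarz estimate above is what accomplishes it, and it is essential that the $\mathcal H(b)$-norm of $k^b_\zeta$ (not merely its $H^2$-norm) enters, since the crude bound $|k^b_\zeta(r\zeta)|\le\|k^b_\zeta\|_{H^2}(1-r^2)^{-1/2}$ is not strong enough to control $\frac{1-|b(r\zeta)|}{1-r}$. One could instead simply cite the known equivalence between a finite angular derivative at $\zeta$ and the presence of the boundary kernel $k^b_\zeta$ in $\mathcal H(b)$ (see \cite{Sar-book} or \cite[Chapter~8]{CimMatRos}), but I would prefer to record the two short estimates above so that the argument is self-contained.
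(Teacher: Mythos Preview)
Your proof is correct. Note, however, that the paper does not give its own proof of Theorem~\ref{thm:clark3}: this is a classical one-variable result, stated in the introduction with a citation to \cite[Theorem~8.9.9]{CimMatRos}. The closest thing to a ``paper's proof'' is the argument for the multivariable analog, Theorem~\ref{thm:eigenvalues}.

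Your approach and the paper's proof of Theorem~\ref{thm:eigenvalues} follow the same outline: write the perturbed operator explicitly, turn the eigenvalue equation into a first-order functional equation, solve it to see the only candidate eigenvector is $k^b_\zeta$, and then reduce everything to the equivalence ``$k^b_\zeta\in\mathcal H(b)$ $\Longleftrightarrow$ $b$ has a finite angular derivative at $\zeta$ with $b(\zeta)=\alpha$.'' The difference is only in the last step. The paper invokes this equivalence as a black box (Theorem~\ref{thm:JC}, i.e.\ \cite{Jur-jc} in several variables, or \cite[Chapter~VI]{Sar-book} in one variable), whereas you supply the two short estimates yourself: the weak-limit argument for one direction and the reproducing-kernel Cauchy--Schwarz bound
\[
\frac{|1-\alpha^* b(r\zeta)|^2}{(1-r)^2}\le \|k^b_\zeta\|_{\mathcal H(b)}^2\,\frac{1-|b(r\zeta)|^2}{1-r^2}
\]
for the other. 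Both routes are standard; yours has the virtue of being self-contained, while the paper's citation keeps the focus on the perturbation calculation, which is what generalizes nontrivially to the ball.
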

Our goal, then, is to obtain analogs of
Theorems~\ref{thm:clark1}--\ref{thm:clark3} for deBranges-Rovnyak type
subspaces of $H^2_d$.  For this, we let $b$ be a {\em contractive
  multiplier} of $H^2_d$. That is, $b$ is an analytic function in the
ball such that $bf\in H^2_d$ whenever $f\in H^2_d$, and multiplication
by $b$ contracts norms:
\begin{equation}
  \label{eqn:b-contractive}
  \|bf\|_{H^2_d}\leq \|f\|_{H^2_d}.
\end{equation}
This operator is denoted $M_b$.  In one dimension, $b$ is a
contractive multiplier if and only if $b$ is bounded by $1$ in the
disk. In higher dimensions this condition is necessary but not
sufficient. Nonetheless the algebra of bounded multipliers of $H^2_d$
is in many ways a suitable analog of $H^\infty(\mathbb D)$ in higher
dimensions. It is true that $b$ is a contractive multiplier if and
only if the Hermitian kernel
\begin{equation}
  \label{eqn:hb-kernel-def}
  k^b(z,w):= \frac{1-b(z)b(w)^*}{1-{zw^*}}
\end{equation}
is positive semidefinite.  When this is the case it is the reproducing
kernel for a space $\mathcal H(b)$ which is contractively contained in
$H^2_d$.  Explicitly, $\mathcal H(b)$ is the range of the operator
$I-M_bM_b^*$ on $H^2_d$, equipped with the unique norm making this
operator a partial isometry. This norm is given by the expression
\begin{equation}\label{eqn:Hb-norm-def}
  \|f\|_{\mathcal H(b)}^2 = \sup_{g\in H^2_d} (\|f+bg\|_{H^2_d}^2 -\|g\|_{H^2_d}^2)
\end{equation}
(see \cite[Chapter I]{Sar-book}), though in this paper the description
of the space in terms of its kernel will be more useful.

The extension of the Clark theory to the $\mathcal H(b)$ spaces just
defined is not straightforward, for several reasons. First, the
obvious analog of the backward shift $S^*$ would be the $d$-tuple
adjoints of the coordinate multipliers $M_{z_1}, \dots M_{z_d}$ on $H^2_d$ (the
{\em $d$-shift} of Arveson \cite{Arv}, Drury \cite{Dru} and
M\"{u}ller-Vasilescu \cite{MulVas}). However the $\mathcal H(b)$ spaces
are in general not invariant for the adjoints of the $d$-shift
\cite{BBF1}.  Following \cite{BB,BBF1}, the correct operators to look
for are those that solve the {\em Gleason problem} in $\mathcal H(b)$.
That is, we seek operators $X_1, \dots X_d$ on $\mathcal H(b)$ such
that for all $f\in \mathcal H(b)$ we have
\begin{equation}
  \label{eqn:gleason-intro}
  f(z)-f(0) =\sum_{j=1}^d z_j (X_jf)(z)
\end{equation}
and such that the tuple $(X_1, \dots X_d)$ is contractive in the sense that 
\begin{equation}
  \sum_{j=1}^d \|X_jf\|^2 \leq 1-|f(0)|^2.
\end{equation}
for all $f\in \mathcal H(b)$.  (When $d=1$, the restricted backward
shift $X=S^*|_{\mathcal H(b)}$ always obeys this estimate, called the
``inequality for difference quotients'' in \cite{dBR-book}.) From
\cite{BB,BBF1} we know contractive solutions always exist, but a
principal difficulty is that, in general, such operators may not be
unique.

The next obstacle is understanding what (if anything) can play the
role of the AC measures $\mu_\alpha$.  First consider a finite,
positive measure $\mu$ on the unit sphere and define a function $b$ in
the ball by the formula
\begin{equation}
  \label{eqn:Mplus-def}
  \frac{1+b(z)}{1-b(z)}=\int_{\partial\mathbb B^d} \frac{1+z\zeta^*}{1-z\zeta^*}\, d\mu(\zeta)
\end{equation}
then $b$ will be a contractive multiplier of $H^2_d$, but importantly,
not every contractive multiplier admits such a representation
\cite{MccPut}.  The correct approach is to replace the
Herglotz-like kernel $\frac{1+z\zeta^*}{1-z\zeta^*}$ with the ``noncommutative'' Herglotz kernel 
\begin{equation}
  \label{eqn:nc-herglotz-intro}
  \left(I+\sum_{j=1}^d z_j L_j^*\right)\left(I-\sum_{j=1}^d z_j L_j^*\right)^{-1}
\end{equation}
where the $L_j$ are Hilbert space operators obeying the relations
\begin{equation}
  \label{eqn:cuntzrelations-intro}
  L_i^*L_j =\delta_{ij}I.
\end{equation}
The measure $\mu$ must then be replaced with a positive linear
functional on the operator system spanned by the NC Herglotz kernels
(\ref{eqn:nc-herglotz-intro}) and their adjoints. (Such NC Herglotz
kernels have been studied before, see
e.g. \cite{MccPut,Jur1,Pop-herglotz}.)

The remainder of the paper is organized as follows: in
Section~\ref{sec:herglotz} we reprove the NC Herglotz formula from
\cite{MccPut,Pop-herglotz} in the form in which we will need it,
define the noncommutative AC states $\{\mu_\alpha\}_{\alpha\in\mathbb
  T}$ associated to $b$, and use them to define (via a GNS type
construction) Hilbert spaces $P^2(\mu_\alpha)$. Using the NC Herglotz
kernel we are then able to construct a ``noncommutative normalized
Fantappi\`e transform'' $\mathcal V_\alpha$ which implements a unitary
equivalence between $P^2(\mu_\alpha)$ and $\mathcal H(b)$.  The
section concludes with Theorem~\ref{thm:Vmu}, which is our analog of
Theorem~\ref{thm:clark1}.

In Section~\ref{sec:moreGNS} we investigate the GNS construction in
the noncommutative $P^2(\mu)$ spaces more closely and introduce the
notion of a ``quasi-extreme'' multiplier $b$. It is these that will
substitute for the extreme points of the unit ball of
$H^\infty(\mathbb D)$.  We also introduce the coisometric $d$-tuples
of operators ${\bf S^\alpha}$ which are a partial analog of the
unitaries $U_\alpha$ in Clark's theory.  

In Section~\ref{sec:gleason} we consider the Gleason problem in
$\mathcal H(b)$ and prove the crucial result that, when $b$ is
quasi-extreme as defined in Section~\ref{sec:moreGNS}, there is in
fact a {\em unique} contractive solution ${\bf X}=(X_1, \dots X_d)$ to
the Gleason problem in $\mathcal H(b)$, and moreover for this solution
equality holds in the multivariable inequality for difference
quotients.  This result uses in a fundamental way the noncommutative
constructions of Section~\ref{sec:moreGNS}.  (In one variable there
would be nothing to do here, since the backward shift is trivially the
unique solution to the Gleason problem, regardless of $b$.) In
Section~\ref{sec:intertwining} we put the results of the previous two
sections together to show that there is a unique rank-one perturbation
of the (now unique) ``backward shift'' ${\bf X}$ that is unitarily
equivalent, via the NC Fantappi\`e transfrom, to the adjoint of the
GNS tuple, ${\bf S}^{\alpha *}$. This is Theorem~\ref{thm:clark-ball},
which is our extenstion of Theorem~\ref{thm:clark2}.

Finally, in Section~\ref{sec:spectra} we prove the analog of
Theorem~\ref{thm:clark3}, in which we show that for the GNS tuple
${\bf S^\alpha}$, the eigenvalue problem
\begin{equation}
  \label{eqn:eigenvalue-prob-intro}
  \sum_{j=1}^d \zeta_j^*S_j^\alpha h = h
\end{equation}
has a solution in $\mathcal H(b)$ if and only if $b$ has a finite
angular derivative at $\zeta\in\partial \mathbb B^d$ with
$b(\zeta)=\alpha$.  This is Theorem~\ref{thm:eigenvalues}. Along the
way we prove a number of other results, including a version of the
Aleksandrov disintegration theorem in this setting
(Theorem~\ref{thm:disintegrate}), and a characterization of those
$\mathcal H(b)$ spaces which are invariant under multiplication by the
coordinate functions $z_j$ (Corollary~\ref{cor:z-invariant}); it turns
out this is the case exactly when $b$ is not quasi-extreme.

\section{The NC Herglotz formula and NC Fantappi\`e transform}\label{sec:herglotz} 
\subsection{Row contractions, row isometries, and dilations}
We begin by recalling some basic constructions in multivariable
operator theory, in particular row isometries and the noncommutative
disk algebra of Popescu \cite{Pop-ncdisk}. Let $H$ be a Hilbert
space. A {\em row contraction} is a $d$-tuple of operators ${\bf
  T}=(T_1, \dots T_d)$ in $B(H)$ satisfying
\begin{equation}\label{eqn:rc-def}
\sum_{j=1}^n T_jT_j^* \leq I.
\end{equation}
In other words, the map
\begin{equation}
  (T_1, \dots T_d)\begin{pmatrix} h_1 \\ \vdots \\ h_d\end{pmatrix} =\sum_{j=1}^d T_jh_j
\end{equation}
is contractive from $H^d$ to $H$ (the direct sum of $d$ copies of
$H$), so when convenient we think of ${\bf T}$ as belonging to $B(H^d,
H)$.  Note that if ${\bf T}$ is isometric, then ${\bf T}^*{\bf
  T}=I_{H^d}$, and so
\begin{equation}\label{eqn:cuntzrels}
  T_i^*T_j =\delta_{ij}I_H,
\end{equation}
which says that the $T_j$ are isometries with orthogonal ranges.  If
${\bf T}$ is unitary, then also ${\bf T}{\bf T}^*=I_H$, which means
equality holds in (\ref{eqn:rc-def}).  In this case the $T_j$ are
called {\em Cuntz isometries}.  We will consider both commuting and
non-commuting row contractions; note however that if ${\bf T}$ is
isometric then the relations (\ref{eqn:cuntzrels}) show that the $T_j$
cannot commute.
\begin{defn}  Let ${\bf T}$ be a row contraction on $H$.  An {\em isometric
  dilation} of ${\bf T}$ is a row isometry ${\bf V}$ acting on a
  larger Hilbert space $K\supset H$ such that for each $j=1,\dots d$,
  the space $H$ is invariant for $V_j^*$ and $V_j^*|_H =T_j^*$.  The
  dilation is called {\em unitary} (or a {\em Cuntz dilation}) if the
  $V_j$ are Cuntz isometries.  The dilation is called {\em minimal} if
\begin{equation}
K=\bigvee_{w\in \mathbb F_d^+} V_w H.
\end{equation}
\end{defn}
(That is, the smallest ${\bf V}$-invariant subspace containing $H$ is
$K$ itself.)  By results of Frazho \cite{Fra}, Bunce \cite{Bun}, and
Popescu \cite{Pop-dil} every row contraction admits a minimal
isometric dilation, which is unique up to unitary equivalence.  More
precisely, if ${\bf T}$ is a row contraction and ${\bf V}$ and ${\bf V}^\prime$ are
minimal isometric dilations of ${\bf T}$ on Hilbert spaces $K\supset H$,
$K^\prime \supset H$ respectively, then the map
\begin{equation}
  UV_wh :=V^\prime_wh
\end{equation}
extends to a unitary transformation from $K$ onto $K^\prime$ satisfying $UV_j=V^\prime_jU$ for all $j$.

\subsection{The noncommutative disk algebra}\label{sec:nc-disk} 
Let $\Fd$ denote the {\em free semigroup on $d$ letters}, that is, the
set of all finite words
\begin{equation}
  w=i_1i_2\cdots i_m
\end{equation}
where $m\geq 1$ is an integer, and the $i_j$ are drawn from the set
$\{1, 2, \dots d\}$.  We also include in $\Fd$ the {\em empty word}
$\varnothing$.  The integer $m$ is called the {\em length} of the word
$w$, by convention the empty word has length zero. If $w, v\in \Fd$
are words of lengths $m$ and $n$ respectively, they can be
concatenated to produce the word $wv$, of length $m+n$. By convention
$\varnothing w =w\varnothing=w$ for all $w\in \Fd$.  The {\em full
  Fock space} $\mathcal F_d$ is the Hilbert space $\ell^2(\Fd)$, with
orthonormal basis $\{\xi_w\}_{w\in\Fd}$. The Hilbert space $\mathcal
F_d$ supports bounded operators $L_1, \dots L_d$, defined by their
action on the basis vectors $\xi_w$:
\begin{equation}
  L_i \xi_w = \xi_{iw}.
\end{equation}
It is straightforward to check that the $L_i$ satisfy
\begin{equation}\label{eqn:Li-ortho}
  L_j^*L_i =\delta_{ij}I.
\end{equation}
Thus by the discussion above $\L$ is a row isometry. In particular,
this implies that ${\bf L}{\bf L}^* = \sum_{i=1}^d L_iL_i^*$ is an
orthogonal projection in $\mathcal F_d$; the range of this projection
is the orthogonal complement of the one-dimensional space spanned by
the vacuum vector $\xi_{\varnothing}$.

 The {\em noncommutative disk algebra} $\mathcal A_d$ (we will fix $d$
 and abbreviate this to $\mathcal A$ is the
 norm-closed algebra of operators on $\mathcal F_d$ generated by $L_1,
 \dots L_d$ and the identity operator $I$.  We will write $\mathcal
 A^*$ for the algebra of operators which are the adjoints of the
 operators in $\mathcal A$.  The C* -algebra generated by the $L_i$
 is called the {\em Cuntz-Toeplitz algebra} $\mathcal E_d$.  The norm
 closure of $\mathcal A_d +\mathcal A_d^*$ in $\mathcal E_d$ is called
 the {\em Cuntz-Toeplitz operator system}. (Recall that an operator
 system is a unital, self-adjoint linear subspace of a unital
 C*-algebra.)  A theorem of Popescu \cite{Pop-opsystem} shows that the row
 isometry $\L$ is universal, in the following sense: if $(V_1, \dots
 V_d)$ is any row isometry, acting on a Hilbert space $H$, then there
 is a representation of the Cuntz-Toeplitz algebra $\pi:\mathcal E_d\to
 B(H)$ such that $\pi(L_i)=V_i$ for each $i=1, \dots d$. More
 generally, if ${\bf T}=(T_1, \dots T_d)$ is any row contraction
 acting in a Hilbert space $H$, then there is a unital, completely
 positive map $\rho:\mathcal E_d\to B(H)$ such that $\rho(L_j)=T_j$. 

A particular subsystem of the Cuntz-Toeplitz operator system will be
of interest.  For a $d$-tuple of nonnegative integers $\nn =(n_1,\dots
n_d)$, and an arbitrary $d$-tuple of operators ${\bf T}=(T_1, \dots
T_d)$ we define the {\em symmetrized monomials}
\[
T^{(\nn)} := \sum T_{i_1}\cdots T_{i_{|\nn|}}
\]
where the sum is taken over all products of exactly $n_1$ $T_1$'s,
$n_2$ $T_2$'s, etc.  So, for example if $d=2$ and $\nn=(2,1)$, then
\[
T^{(2,1)} = T_1^2 T_2  +T_2 T_1^2 + T_1T_2T_1
\]
By convention we put $T^{({\mathbf 0})}=I$.  In particular, if
$z=(z_1,\dots z_d)$ is a $d$-tuple of scalars, and the monomial
$z^{\nn}$ is defined in the usual multi-index notation as
$z^{\nn}=z_1^{n_1} \cdots z_d^{n_d}$, then we have for each integer
$k\geq 1$
\begin{equation}\label{E:zL_power}
(z_1T_1 +\cdots +z_dT_d)^k = \sum_{|\nn|=k} z^{\nn} T^{(\nn)}.
\end{equation}

The {\em symmetric part} $\mathcal S$ of $\mathcal A$ is defined to be
the closed linear span of the symmetrized monomials $\{L^{(\nn)} :
\nn\in \mathbb N^d\}$.  Much of our interest will be in positive
linear functionals $\mu$ defined on the operator system $\mathcal
S+\mathcal S^*\subset \mathcal A+\mathcal A^*$.  

In what follows we will use the notation
\begin{equation}
  \zlstar:= \sum_{j=1}^d z_j L_j^*.
\end{equation}
It follows that for all $z,w\in\mathbb C^d$,
\begin{equation}\label{eqn:KEY}
  (\zlstar)( {{\bf L}w^*}) = \sum_{j,k=1}^d z_jw_l^* L_j^*L_k ={zw^*}
\end{equation}
by the orthogonality relations for the $L_j$. In particular by putting
$z=w$ we have
\begin{equation}
  \|\zlstar\|=|z|
\end{equation}
and hence for all $z\in\mathbb B^d$ the operator $I-\zlstar$ is
invertible, with inverse given by the (norm-convergent) series
\begin{equation}
  (I-\zlstar)^{-1} =\sum_{k=0}^\infty (\zlstar)^k =
  \sum_{\nn\in\mathbb N^d} z^{\nn}\frac{\nn!}{|\nn|!}L^{(\nn)*}.
\end{equation}
It follows that $(I-\zlstar)^{-1}$ belongs to $\mathcal S^*$ for all
$z\in\mathbb B^d$.  

The identity (\ref{eqn:KEY}) explains the appearance of noncommutative
methods in our treatment of the $\mathcal H(b)$ spaces in $H^2_d$.
Note that in one variable, if $z,w$ are complex numbers and
$|\zeta|=1$ then trivially
\begin{equation}
(z\zeta^*)(\zeta w^*)=zw^*  
\end{equation}
However if $z,w\in \mathbb C^d$, $d>1$, and $\zeta\in\mathbb C^d$ has
unit norm, then
\begin{equation}\label{eqn:badKEY}
(z\zeta^*)(\zeta w^*)  = \sum_{j,k=1}^d z_jw_k^*\zeta_j^*\zeta_k\neq zw^*.
\end{equation}
By replacing $\zeta$ with the row isometry ${\bf L}$, equation
(\ref{eqn:KEY}) ``repairs'' equation (\ref{eqn:badKEY}).  (Indeed,
note that the identity $z{\bf T^*}{\bf T}w^*=zw^*$ cannot hold for any
commuting tuple ${\bf T}$ when $d>1$.)  The identity (\ref{eqn:KEY})
is thus central to our development, especially in the proof of the key
algebraic results in Proposition~\ref{prop:NC-fantappie}.

The following lemma will be used several times.
\begin{lem}\label{lem:NC-kernels-dense}
  The linear span of the set
  \begin{equation}
    \{ (I-{\bf L}w^*)^{-1}:w\in\mathbb B^d\}
  \end{equation}
is norm dense in $\mathcal S$.
\end{lem}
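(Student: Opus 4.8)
The plan is to establish the two inclusions
\[
\overline{\operatorname{span}}\{\Fw : w\in\mathbb B^d\}\ \subseteq\ \mathcal S
\ \subseteq\ \overline{\operatorname{span}}\{\Fw : w\in\mathbb B^d\},
\]
the first of which is immediate. Exactly as for the series for $(I-\zlstar)^{-1}$ displayed above (apply~(\ref{E:zL_power}) with $T_j=L_j$ and the scalars $w_j^*$, so that $(\mathbf L w^*)^k=\sum_{|\nn|=k}(w^*)^{\nn}L^{(\nn)}$ and hence $\Fw=\sum_{\nn}\gamma_{\nn}(w^*)^{\nn}L^{(\nn)}$ with every $\gamma_{\nn}\neq 0$), each $\Fw$ is a norm-convergent sum of scalar multiples of the $L^{(\nn)}$. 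Since $\mathcal S$ is by definition the \emph{closed} linear span of $\{L^{(\nn)}\}$, this already puts every $\Fw$, and hence the closed linear span of the resolvents, inside $\mathcal S$.

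For the reverse inclusion I would use a Hahn--Banach argument: it suffices to show that every bounded linear functional $\varphi$ on $\mathcal S$ which annihilates $\{\Fw:w\in\mathbb B^d\}$ is identically zero. Fix such a $\varphi$. Since the series $\Fw=\sum_{\nn}\gamma_{\nn}(w^*)^{\nn}L^{(\nn)}$ converges absolutely in norm (the grouped series $\sum_k(\mathbf L w^*)^k$ has $\|(\mathbf L w^*)^k\|\le |w|^k$), we may apply $\varphi$ term by term to obtain
\[
0=\varphi(\Fw)=\sum_{k\ge 0}\ \sum_{|\nn|=k}\gamma_{\nn}\,\varphi\!\left(L^{(\nn)}\right)(w^*)^{\nn}\qquad\text{for all }w\in\mathbb B^d.
\]
The right-hand side is a convergent power series in the conjugate variables $w^*$ on the ball; replacing $w$ by $tw$ and comparing powers of $t$ shows each homogeneous part vanishes, and then linear independence of the monomials $\{(w^*)^{\nn}:|\nn|=k\}$ forces $\gamma_{\nn}\varphi(L^{(\nn)})=0$, i.e.\ $\varphi(L^{(\nn)})=0$ for every $\nn$. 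As $\varphi$ is continuous and $\mathcal S$ is the closed span of the $L^{(\nn)}$, we conclude $\varphi\equiv 0$ on $\mathcal S$, which gives the reverse inclusion.

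One can also avoid duality: the map $w\mapsto\Fw$ is an $\mathcal S$-valued function holomorphic in $w^*$ on $\mathbb B^d$, so restricting to a complex disk $\{\lambda\omega:|\lambda|<1\}$ with $\omega\in\partial\mathbb B^d$ and applying the Banach-space-valued Cauchy integral formula exhibits each Taylor coefficient $\gamma_{\nn}L^{(\nn)}$ as a norm-limit of finite linear combinations of resolvents; varying $\omega$ and using that the vectors $((\omega^*)^{\nn})_{|\nn|=k}$ span, one recovers every $L^{(\nn)}$ in the closed span of the resolvents. Either way, this lemma is elementary and I do not anticipate a genuine obstacle; the only steps needing a word of care are (i) that each symmetrized monomial $L^{(\nn)}$ really occurs with a \emph{nonzero} coefficient in the expansion of $\Fw$ — clear from~(\ref{E:zL_power}) — so that killing the whole family of resolvents kills each $L^{(\nn)}$ individually, and (ii) the routine justification of interchanging $\varphi$ with the infinite sum and of the vanishing of all Taylor coefficients of an identically-zero power series on $\mathbb B^d$.
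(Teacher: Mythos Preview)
Your proof is correct. The approach, however, differs from the paper's. You argue by duality: a bounded functional $\varphi$ on $\mathcal S$ killing every resolvent $\Fw$ yields an identically vanishing power series in $w^*$, whence $\varphi(L^{(\nn)})=0$ for all $\nn$ and $\varphi\equiv 0$ by Hahn--Banach. The paper instead works constructively: it first shows that for any contraction $T$ one has $T^m=\lim_{r\to 0} r^{-m}\bigl((I-rT)^{-1}-\sum_{n<m} r^nT^n\bigr)$ in norm, so by induction each $({\bf L}w^*)^m$ lies in the closed span of the resolvents; then a finite-dimensional linear algebra argument (if the span of $\{({\bf L}w^*)^m:|w|<1\}$ were strictly smaller than the span of $\{L^{(\pp)}:|\pp|=m\}$, some nonzero polynomial $\sum_{|\pp|=m}c_{\pp}w^{\pp}$ would vanish on $\mathbb B^d$) recovers every $L^{(\pp)}$. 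Your duality argument is shorter and perhaps cleaner; the paper's is more explicit about how each $L^{(\nn)}$ is approximated, which can be useful when one needs a concrete approximation. Your alternative sketch via the Banach-space Cauchy formula is closer in spirit to the paper's method, since both extract the homogeneous components from the resolvent family directly rather than via an annihilating functional.
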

\begin{proof}
  Let $\mathcal M$ denote the closed linear span of the $(I-{\bf
    L}w^*)^{-1}$ in $\mathcal S$ as $w$ ranges over $\mathbb B^d$.
  First, note that if $T$ is any operator on Hilbert space with
  $\|T\|\leq 1$, then by expanding $(I-rT)^{-1}$ in a geometric
  series, we have for each positive integer $m$
  \begin{equation}
    T^m =\lim_{r\to 0} \frac{1}{r^m} \left( (I-rT)^{-1} -\sum_{n=0}^{m-1}r^nT^n \right)
  \end{equation}
  where the limit exists in the operator norm.  Since $I\in \mathcal
  M$, induction on this fact with $T={\bf L}w^*$ shows that $({\bf
    L}w^*)^{m}$ lies in $\mathcal M$ for all $w\in\mathbb B^d$ and all
  $m\geq 0$.

  From this, it suffices to prove that for each fixed $m$, the span of
  $\{({\bf L}w^*)^m:|w|<1\}$ is equal to the span of the set
  $\{L^{(\pp)} :|\pp| =m\}$.  From (\ref{E:zL_power}), the former span
  is contained in the latter. If they are not equal, then by linear
  algebra there is a set of scalars $\{c_{\pp}:|\pp| =m\}$, not all
  $0$, so that
  \begin{equation}
    \sum_{|\pp|=m} c_{\pp} w^{\pp} =0
  \end{equation}
  for all $|w|<1$. But this is a polynomial which vanishes on the open
  ball $\mathbb B^d$, and hence must vanish identically, so all
  $c_{\pp}$ are $0$, a contradiction.
\end{proof}

The next lemma encodes a key observation used in what will follow,
namely that if $p, q$ are polynomials in $\mathcal S$, then $p({\bf
  L})^*q({\bf L})$ belongs to $\mathcal S+\mathcal S^*$.  (This is
essentially an elaboration of (\ref{eqn:KEY}) which will
allow us to carry out a GNS-type construction in $\mathcal S+\mathcal
S^*$ in Section~\ref{sec:GNS-main}.) We will do the required calculation quite
explicitly.  First, some notation: for $d$-tuples of nonnegative
integers $\mm=(m_1, \dots m_d)$, $\nn=(n_1, \dots n_d)$, say $\mm\leq
\nn$ if and only if $m_i\leq n_i$ for each $i=1, \dots d$.  If
$\mm\leq \nn$, define $\nn-\mm=(n_1-m_1, \dots n_d-m_d)$.

Next, we introduce the {\em letter counting map} $\lambda:\Fd\to
\mathbb N^d$, which when applied to a word $w$ returns the $d$-tuple
$(n_1, \dots n_d)$ where $n_1$ is the number of $1$'s appearing in
$w$, $n_2$ the number of $2$'s, etc.  It is immediate from definitions
that
\begin{equation}
  L^{(\nn)} =\sum_{\lambda(w)=\nn} L_w.
\end{equation}
\begin{lem}\label{lem:bimodule}
  For all $\mm,\nn\in \mathbb N^n$, 
  \begin{equation}
    L^{(\nn)*}L^{(\mm)} = \begin{cases}
\frac{|\nn|!}{\nn !}L^{(\mm-\nn)} &\text{if } \mm\geq \nn \\      
\frac{|\mm|!}{\mm !}L^{(\nn-\mm)*} & \text{if } \nn\geq \mm \\
\frac{|\nn|!}{\nn !}I &\text{if } \mm=\nn \\
      0 &\text{otherwise,}
    \end{cases}
  \end{equation}
and hence if $p,q$ are polynomials in $\mathcal S$ then $p({\bf
  L})^*q({\bf L})$ lies in $\mathcal S+\mathcal S^*$.
\end{lem}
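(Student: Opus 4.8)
The plan is to reduce everything to the single identity $L_j^* L_i = \delta_{ij} I$ and then push it through the symmetrized monomials $L^{(\nn)}$ by a careful bookkeeping of which letters "cancel." First I would prove the four-case formula for $L^{(\nn)*}L^{(\mm)}$. Writing $L^{(\nn)} = \sum_{\lambda(w)=\nn} L_w$ and similarly for $L^{(\mm)}$, we have
\[
L^{(\nn)*}L^{(\mm)} = \sum_{\lambda(v)=\nn} \sum_{\lambda(w)=\mm} L_v^* L_w.
\]
Now $L_v^* L_w$ is computed by the relations (\ref{eqn:Li-ortho}): pairing up letters from the end of $v$ against the front of $w$, the product is $I$ (after cancellation) when one of $v,w$ is a prefix of the other, namely $L_v^* L_w = L_{w'}$ if $w = v w'$, $L_v^* L_w = L_{v'}^*$ if $v = w v'$, and $L_v^* L_w = 0$ otherwise. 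So only terms where $v$ is a prefix of $w$ (or vice versa) survive. In the case $\mm \geq \nn$: for each word $u$ with $\lambda(u) = \mm - \nn$, I count the number of pairs $(v,w)$ with $\lambda(v)=\nn$, $\lambda(w)=\mm$ and $w = vu$ — this is the number of ways to choose the prefix $v$ of length $|\nn|$, which is exactly the multinomial coefficient $\binom{|\nn|}{\nn} = |\nn|!/\nn!$, independent of $u$. Summing over all such $u$ gives $\frac{|\nn|!}{\nn!} \sum_{\lambda(u) = \mm-\nn} L_u = \frac{|\nn|!}{\nn!} L^{(\mm-\nn)}$. The case $\nn \geq \mm$ is the adjoint of this (or symmetric), giving $\frac{|\mm|!}{\mm!} L^{(\nn-\mm)*}$; the case $\mm = \nn$ is the overlap of both, giving $\frac{|\nn|!}{\nn!} I$; and when neither $\mm \leq \nn$ nor $\nn \leq \mm$, no word can be a prefix of another with the required letter counts, so the product is $0$.

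From the case analysis, each $L^{(\nn)*}L^{(\mm)}$ is a scalar multiple of either some $L^{(\pp)}$, some $L^{(\pp)*}$, or $I$, hence lies in $\mathcal S + \mathcal S^*$. For general polynomials $p({\bf L}) = \sum_{\nn} a_\nn L^{(\nn)}$ and $q({\bf L}) = \sum_{\mm} b_\mm L^{(\mm)}$ in $\mathcal S$ (finite sums), I expand $p({\bf L})^* q({\bf L}) = \sum_{\nn,\mm} \overline{a_\nn} b_\mm \, L^{(\nn)*}L^{(\mm)}$ and apply the formula termwise; the result is a finite linear combination of elements of $\mathcal S + \mathcal S^*$, hence lies in $\mathcal S + \mathcal S^*$, which is a linear subspace. (Here I am using that $\mathcal S$, being the closed span of the $L^{(\nn)}$, in particular contains all such finite linear combinations; ``polynomials in $\mathcal S$'' are exactly these finite sums, so no closure issues arise at this stage.)

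The main obstacle is purely combinatorial: correctly justifying that the multinomial coefficient $|\nn|!/\nn!$ is the right count and is independent of the residual word $u$. The clean way to see this is that choosing a word $v$ with $\lambda(v) = \nn$ amounts to choosing, among the $|\nn|$ positions, which $n_1$ hold the letter $1$, which $n_2$ hold the letter $2$, and so on — a multinomial count that does not interact with $u$ at all, since $v u$ determines $v$ as its length-$|\nn|$ prefix. One should double-check the edge cases $\nn = \mathbf 0$ (where $L^{(\mathbf 0)} = I$ and the formula reads $L^{(\mm)} = \frac{0!}{\mathbf 0!} L^{(\mm)}$, consistent) and the coincidence of the $\mm \geq \nn$ and $\nn \geq \mm$ branches when $\mm = \nn$ (both give $\frac{|\nn|!}{\nn!} I$, consistent). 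Everything else is routine once the counting is pinned down.
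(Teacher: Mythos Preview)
Your proof is correct and follows essentially the same approach as the paper's: expand both symmetrized monomials as sums over words, use the orthogonality relations to see that $L_v^*L_w$ survives only when one word is a prefix of the other, and then count. The only cosmetic difference is that the paper fixes the prefix word and sums over its extensions, whereas you fix the residual word $u$ and count the prefixes; the multinomial count $|\nn|!/\nn!$ appears in both organizations for the same reason.
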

\begin{proof}
First suppose $\mm\geq \nn$.  Fix $w$ with $\lambda(w)=\nn$.  Let
$E(w)$ denote the set of words in $\lambda^{-1}(\mm)$ whose initial
string is $w$:
  \begin{equation}
    E(w) =\{ u\in \Fd | u=wv \text{ and } \lambda(u)=\mm\}.
  \end{equation}
Note that this set is alternatively defined as
\begin{equation}
E(w)=\{wv\in\Fd | \lambda(v)=\mm-\nn\}  
\end{equation}
Now, if $u\in\lambda^{-1}(\mm)$, then $L_w^*L_u=L_v$ if $u\in E(w)$
and $u=wv$, while $L_w^*L_u=0$ if $u\notin E(w)$. Thus
\begin{align}
  L^{(\nn)*}L^{(\mm)} &= \sum_{\lambda(w)=\nn} \sum_{\lambda(u)=\mm} L_w^* L_u \\
&= \sum_{\lambda(w)=\nn} \sum_{u=wv\in E(w)} L_w^*L_u \\
&= \sum_{\lambda(w)=\nn} \sum_{v\in\lambda^{-1}(\nn-\mm)} L_v \\
&= \frac{|\nn|!}{\nn !}L^{(\mm-\nn)}
\end{align}
since the cardinality of $\lambda^{-1}(\nn)$ is $\frac{|\nn|!}{\nn
  !}$.  The cases $\nn\geq \mm$ and $\nn=\mm$ follow by symmetry.

Finally, if $\mm$ and $\nn$ are incomparable, then no word in
$\lambda^{-1}(\mm)$ is a subword of a word in $\lambda^{-1}(\nn)$ and
vice versa, so each summand $L_w^*L_u$ is $0$.
\end{proof}
\subsection{The space $P^2(\mu)$}\label{sec:p2mu} Now, if $\mu$ is a positive linear functional on $\mathcal
S^*+\mathcal S$, Lemma~\ref{lem:bimodule} allows us to define a
pre-inner product on $\mathcal S\times\mathcal S$: for polynomials
$p,q\in \mathcal S$, define
\begin{equation}
  \langle p,q\rangle := \mu(q({\bf L})^*p({\bf L})).
\end{equation}
Since $\mu$ is a positive linear functional, this map obeys the
Cauchy-Schwarz inequality
\begin{equation}\label{eqn:CS-for-mu}
|\mu(q({\bf L})^*p({\bf L}))|^2\leq \mu(q({\bf L})^*q({\bf L}))\mu(p({\bf L})^*p({\bf L}))  
\end{equation}
and thus extends to a pre-inner product on all of $\mathcal
S\times\mathcal S$.  We will write $P^2(\mu)$ for the Hilbert space
obtained by modding out null vectors and completing, and for $p\in
\mathcal S$ we write $[p]$ for the image of $p$ in $P^2(\mu)$.

\subsection{de Branges-Rovnyak spaces and the ``noncommutative'' AC measures}
Let $b$ be a contractive multiplier of $H^2_d$ (hereafter we will just
call $b$ a {\em multiplier}). From the introduction, we have the
reproducing kernel Hilbert space $\mathcal H(b)$ with kernel
\begin{equation}
  k^b(z,w)=\frac{1-b(z)b(w)^*}{1-{zw^*}}
\end{equation}
Central to our development will be the following noncommutative
Herglotz-style formula for $b$. Such a formula is established in
\cite{MccPut} and \cite{Pop-herglotz}, we include a proof here since
it is short (and to establish the role of the operator system
$\mathcal S+\mathcal S^*$).  The formula is based on the NC Herglotz
kernel
\begin{equation}
  H(z,L):= (I+\zlstar)(I-\zlstar)^{-1}
\end{equation}
For each $z$ in the ball, the operator $H(z,L)$ has positive real
part, indeed using the relation (\ref{eqn:KEY}) one finds
\begin{equation}
  H(z,L)+H(z,L)^* = (1-|z|^2) (I-\zlstar )^{-1}(I-\zlstar)^{*-1}.
\end{equation}
\begin{prop}\label{prop:nc-herglotz}
Let $b$ be a contractive multiplier of the Drury-Arveson space
$H^2_d$. Then there exists a unique positive linear functional $\mu$
on $\mathcal S+\mathcal S^*$ such that
\begin{equation}\label{eqn:nc-herglotz}
  \frac{1+b(z)}{1-b(z)} = \mu\left((I+\zlstar)(I-\zlstar)^{-1} \right) +i\text{Im} \left(\frac{1+b(0)}{1-b(0)} \right).
\end{equation}
\end{prop}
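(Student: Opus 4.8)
The plan is to use the positive semidefiniteness of $k^b$ (equivalently, the contractivity of $M_b$) to realize $\mu$ as a vector state composed with a $*$-representation of the Cuntz--Toeplitz algebra on Fock space, and to read off (\ref{eqn:nc-herglotz}) from a single algebraic identity about the NC Herglotz kernel together with (\ref{eqn:KEY}). Put $h(z)=\frac{1+b(z)}{1-b(z)}$, which is holomorphic on $\mathbb B^d$ since $\|M_b\|\le 1$ forces $|b|\le 1$ there. Two preliminary facts carry the argument. First, clearing denominators gives the scalar identity
\[
  h(z)+h(w)^* \;=\; \frac{2\bigl(1-b(z)b(w)^*\bigr)}{(1-b(z))(1-b(w))^*} \;=\; 2(1-zw^*)\,K(z,w),
\]
where $K(z,w):=k^b(z,w)\bigl((1-b(z))(1-b(w))^*\bigr)^{-1}$ is a positive semidefinite kernel on $\mathbb B^d$, being the Schur product of the positive kernel $k^b$ with the rank-one positive kernel $\bigl((1-b(z))(1-b(w))^*\bigr)^{-1}$. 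Second, writing $H(z,L)=2\Fz-I$ and $H(w,L)^*=2\Fw-I$, combining over the common factor $\Fz(\cdot)\Fw$, and using (\ref{eqn:KEY}) to simplify the inner factor
\[
  (I-{{\bf L}w^*})+(I-\zlstar)-(I-\zlstar)(I-{{\bf L}w^*}) \;=\; I-(\zlstar)({{\bf L}w^*}) \;=\; (1-zw^*)I ,
\]
one obtains the operator identity
\[
  H(z,L)+H(w,L)^* \;=\; 2(1-zw^*)\,\Fz\,\Fw .
\]
In particular $\Fz\Fw\in\mathcal S+\mathcal S^*$; this is exactly why $\mu$ belongs on the operator system $\mathcal S+\mathcal S^*$, and is the point of reproving the formula in this language.

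For existence I would argue as in \cite{MccPut,Pop-herglotz}. Since $K\succeq 0$, a Kolmogorov factorization gives a Hilbert space $\mathcal G$ and vectors $\{g_z\}_{z\in\mathbb B^d}$ with $K(z,w)=\langle g_w,g_z\rangle_{\mathcal G}$; feeding this into the scalar identity and carrying out the standard ``lurking isometry'' construction produces a row contraction ${\bf T}=(T_1,\dots,T_d)$ on a Hilbert space $\mathcal K$ together with a vector $\gamma\in\mathcal K$ such that
\[
  h(z)= i\,\text{Im}\!\left(\frac{1+b(0)}{1-b(0)}\right) + \Bigl\langle \bigl(I+\sum_j z_jT_j^*\bigr)\bigl(I-\sum_j z_jT_j^*\bigr)^{-1}\gamma,\ \gamma\Bigr\rangle_{\mathcal K}\qquad(z\in\mathbb B^d).
\]
Now dilate ${\bf T}$ to a row isometry ${\bf V}=(V_1,\dots,V_d)$ on some $\mathcal K'\supseteq\mathcal K$ (Frazho \cite{Fra}, Bunce \cite{Bun}, Popescu \cite{Pop-dil}), so that each $V_j^*$ leaves $\mathcal K$ invariant with $V_j^*|_{\mathcal K}=T_j^*$; by the universality of the row isometry ${\bf L}$ (Popescu \cite{Pop-opsystem}) there is a $*$-representation $\pi:\mathcal E_d\to B(\mathcal K')$ with $\pi(L_j)=V_j$. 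Define $\mu(A):=\langle\pi(A)\gamma,\gamma\rangle$ for $A\in\mathcal S+\mathcal S^*\subseteq\mathcal E_d$; as a vector state composed with a $*$-representation, $\mu$ is a positive linear functional. Since $\pi$ is multiplicative and norm continuous, $\pi(H(z,L))=\bigl(I+\sum_j z_jV_j^*\bigr)\bigl(I-\sum_j z_jV_j^*\bigr)^{-1}$ (the Neumann series converges because $\|\sum_j z_jV_j^*\|=|z|<1$), and since $\gamma\in\mathcal K$ and $\mathcal K$ is $V_j^*$-invariant, both the resolvent and the remaining factor, applied to $\gamma$, stay in $\mathcal K$, where they agree with the corresponding expressions in the $T_j^*$. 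Hence $\mu(H(z,L))=\bigl\langle\bigl(I+\sum_j z_jT_j^*\bigr)\bigl(I-\sum_j z_jT_j^*\bigr)^{-1}\gamma,\gamma\bigr\rangle = h(z)-i\,\text{Im}\bigl(\tfrac{1+b(0)}{1-b(0)}\bigr)$, which is (\ref{eqn:nc-herglotz}).

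For uniqueness: if $\mu$ satisfies (\ref{eqn:nc-herglotz}) then $\mu(H(z,L))$, hence $\mu(\Fz)=\tfrac12\bigl(\mu(H(z,L))+\mu(I)\bigr)$, is prescribed for all $z\in\mathbb B^d$; applying the adjoint operation to Lemma~\ref{lem:NC-kernels-dense} shows the operators $\{\Fz:z\in\mathbb B^d\}$ have norm-dense linear span in $\mathcal S^*$, so $\mu$ is determined on $\mathcal S^*$, hence on $\mathcal S$ as well since a positive functional is selfadjoint ($\mu(A^*)=\mu(A)^*$), and it is norm continuous, so it is determined on the closure $\mathcal S+\mathcal S^*$. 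The crux is the existence half --- concretely, the lurking-isometry realization of $h$ from the positive kernel $K$, which is the one step that uses the hypothesis that $b$ is a \emph{contractive} multiplier ($k^b\succeq 0$) in an essential way; that realization being classical (\cite{MccPut,Pop-herglotz}), the genuinely new content is the operator identity above, which locates $\mu$ on $\mathcal S+\mathcal S^*$. If a self-contained construction is preferred, the operator identity shows $\mathcal S+\mathcal S^*$ is the closed span of the products $\Fz\Fw$ (by Lemma~\ref{lem:NC-kernels-dense}), so one may simply \emph{define} $\mu(\Fz\Fw):=K(z,w)$ and extend linearly; the remaining work is well-definedness, obtained by decomposing any linear relation among the $\Fz\Fw$ --- equivalently among the $H(z,L)=2\Fz-I$ and $H(w,L)^*=2\Fw-I$ --- into its $\mathcal S^*$-, $\mathcal S$-, and scalar parts (using the linear independence of the $L^{(\nn)}$, cf.\ Lemma~\ref{lem:bimodule}) and matching it against the splitting $2(1-zw^*)K(z,w)=h(z)+h(w)^*$ of the kernel into a holomorphic and an antiholomorphic summand, forcing the corresponding combination of $K$-values to vanish; positivity is then immediate from $K\succeq 0$ on combinations of the $\Fw$, after which a GNS argument extends $\mu$ to $\mathcal S+\mathcal S^*$ --- though this repackages rather than avoids the lurking-isometry step.
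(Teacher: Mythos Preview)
Your proof is correct and follows essentially the same approach as the paper's: factor the positive kernel $(1-b(z))^{-1}k^b(z,w)(1-b(w))^{*-1}$, apply the lurking-isometry argument to realize $h(z)=\frac{1+b(z)}{1-b(z)}$ via a row isometry (the paper obtains the isometry directly in the lurking-isometry step rather than first producing a row contraction and then dilating), and define $\mu$ as the resulting vector functional. The paper's uniqueness argument simply reads off the Taylor coefficients of $h$ rather than invoking Lemma~\ref{lem:NC-kernels-dense}, and the operator identity $H(z,L)+H(w,L)^*=2(1-zw^*)\Fz\Fw$ you highlight is stated and proved separately in the paper (Proposition~\ref{prop:NC-fantappie}) rather than inside this proof.
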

\begin{proof}
Consider the analytic function
\[
f(z)=(1+b(z))(1-b(z))^{-1}
\]
and observe that $f$ belongs to the {\em positive Schur class}, i.e. the kernel 
\[
\frac{f(z)+f(w)^*}{1-{zw^*}}
\]
is positive.  Indeed it factors as
\[
(1-b(z))^{-1}\frac{1-b(z)b(w)^*}{1-{zw^*}}(1-b(w)^*)^{-1}
\]
We may thus factor
\begin{equation*}
f(z)+f(w)^*  = F(z)[1-{zw^*}]F(w)^*
\end{equation*}
for a holomorphic function $F$ taking values in some auxiliary
Hilbert space $H$.  Substituting in turn $w=0$, $z=0$, and $z=w=0$, we get
\begin{align*}
f(z)+f(0)^*  &= F(z)F(0)^*\\
f(0)+f(w)^*  &= F(0)F(w)^*\\
f(0)+f(0)^*  &= F(0)F(0)^*
\end{align*}
Adding the first and last equations and subtracting the middle two leaves
\[
F(z){zw^*} F(w)^* =[F(z)-F(0)][F(w)-F(0)]^*
\]
By the lurking isometry argument, there exists an isometric $d$-tuple
${\bf V}=(V_1,\dots V_d)$ on $H$ such that
\[
\sum_{j=1}^d w_j^*V_j F(w)^* =F(w)^*-F(0)^*
\] 
Solving for $F(w)^*$ gives
\[
F(w)^* = (I-w^*{\bf V})^{-1} F(0)^*
\]
or
\begin{equation}\label{eqn:nc-herglotz-step}
f(z)+f(0)^* = F(z)F(0)^* = F(0)(I-z{\bf V}^*)^{-1} F(0)^*
\end{equation}
By the Frazho-Bunce-Popescu dilation theorem, the tuple ${\bf V}$ is the
image of ${\bf L}$ under a unital, completely positive map $\psi$.  We now define
\begin{equation}
  \mu(L^{(\nn)})=F(0)\psi(L^{(\nn)})F(0)^*
\end{equation}
which shows that $\mu$ is positive, since $\psi$ is positive. With
this definition and some algebra, (\ref{eqn:nc-herglotz-step}) becomes
\begin{equation}\label{eqn:nc-herglotz-final}
f(z) = \mu\left((I+\zlstar)(I-\zlstar)^{-1} \right) +i\text{Im}f(0)  
\end{equation}
as desired.  The uniqueness of $\mu$ is clear, since by
(\ref{eqn:nc-herglotz-final}) the value of $\mu(L^{(\nn)})$ is just
the coefficient of $z^{(\nn)}$ in the Taylor expansion of $f$ (with
$\mu(I)=\text{Re}f(0)$ when $\nn=0$).
\end{proof}
This proposition has a converse; namely if $\mu$ is a positive
functional on $\mathcal S+\mathcal S^*$ and $b$ is defined by
(\ref{eqn:nc-herglotz}) then $b$ is a contractive multiplier of
$H^2_d$; it follows as in \cite{MccPut} by reversing the steps of the
above argument. The principal reason for introducting $\mathcal
S+\mathcal S^*$ is that it forces the functional $\mu$ representing
$b$ to be unique; this need not be the case if we worked with
$\mathcal A+\mathcal A^*$ (or, say, the whole Cuntz-Toeplitz algebra).

With $b$ fixed and $\alpha$ a unimodular scalar, we can carry out the
above construction with $\alpha^*b$ in place of $b$. We then have
\begin{defn}\label{defn:ac-state-def} Let $b$ be a contractive
  multiplier of $H^2_d$. The {\em Aleksandrov-Clark state} (or {\em AC
    states}) for $b$ are the family of states
  $\{\mu_\alpha\}_{\alpha\in\mathbb T}$ on $\mathcal S+\mathcal S^*$
  such that
  \begin{equation}\label{eqn:ac-state-def}
  \frac{1+\alpha^*b(z)}{1-\alpha^*b(z)} = \mu_\alpha\left((I+\zlstar)(I-\zlstar)^{-1} \right) +i\text{Im} \left(\frac{1+\alpha^*b(0)}{1-\alpha^*b(0)} \right)
\end{equation}
as in Proposition~\ref{prop:nc-herglotz}.
\end{defn}

If we compare the Herglotz-type formula (\ref{eqn:nc-herglotz}) with
the classical one-variable formula
\begin{equation}\label{eqn:1d-herglotz}
  \frac{1+b(z)}{1-b(z)} =\int_{\mathbb T} \frac{1+z\zeta^*}{1-z\zeta^*}\, d\mu(\zeta) +i\text{Im} \frac{1+b(0)}{1-b(0)}
\end{equation}
this suggests viewing the expression 
\begin{equation}
  (I+\zlstar)(I-\zlstar)^{-1}
\end{equation}
as a noncommutative Herglotz kernel, and
\begin{equation}
  (I-\zlstar)^{-1}
\end{equation}
as a noncommutative Szeg\H{o} kernel. This is explored further in the next section. 

\subsection{The NC Fantappie transform}

Consider again the one variable case.  As noted in the introduction,
if $\mu$ is an AC measure for $b$, then Theorem~\ref{thm:clark1} says
that the normalized Cauchy transform
\begin{equation}
\mathcal V_\mu(f)(z):= (1-b(z))\int_{\mathbb T}
\frac{f(\zeta)}{1-z\zeta^*}\, d\mu
\end{equation}
implements a unitary operator from $P^2(\mu)$ onto $\mathcal
\mathcal{H}(b)$.  We are now ready to prove the analog of this theorem
in the ball.
\begin{defn}
  Let $\mu$ be a state on $\mathcal S+\mathcal S^*$, representing a
  multiplier $b$.  For a polynomial $p\in\mathcal S$, the {\em
    normalized NC Fantappi\`e transform} of $p$ is
  \begin{equation}\label{eqn:NC-transform-def}
    \mathcal V_\mu(p)(z) := (1-b(z))\mu((1-\zlstar)^{-1} p({\bf L})).
  \end{equation}
\end{defn}
Using Lemma~\ref{lem:bimodule} and the fact that the series expansion
of $(1-\zlstar)^{-1}$ is norm convergent in $\mathcal S^*$, one sees
that $(1-\zlstar)^{-1} p({\bf L})$ belongs to the closure of $\mathcal
S+\mathcal S^*$, so $\mathcal V_\mu$ is defined. Our next goal is to
show that $\mathcal V_\mu$ extends to a unitary operator from
$P^2(\mu)$ onto $\mathcal H(b)$.

We will also use the notation
\begin{equation}
  \label{eqn:nc-cauchy-transform-def}
  \mathcal K_\mu p(z):= \mu((1-\zlstar)^{-1} p({\bf L}))
\end{equation}
so that $\mathcal V_\mu p =(1-b(z))\mathcal K_\mu p$. Once we show
that $\mathcal V_\mu$ extends to a unitary operator on $P^2(\mu)$, it
follows that $\mathcal K_\mu$ also extends to a well-defined linear
operator taking $P^2(\mu)$ into the space of holomorphic functions on
the ball.

To streamline the notation, write
\begin{equation}
  H(z,L) =\Fzplus \Fz.
\end{equation}
\begin{prop}\label{prop:NC-fantappie}
  For all $z,w\in\mathbb B^d$, 
  \begin{equation}\label{eqn:NC-kernel-identity}
    \Fz \Fw = \frac12\left(\frac{H(z,L) + H(w,L)^* }{1-{zw^*} }\right)
  \end{equation}
In particular, if $\mu$ is a positive linear functional on $\mathcal
S+\mathcal S^*$ and $\mu$ represents $b$ as in
(\ref{eqn:nc-herglotz}), then
\begin{align}
\label{eqn:NC-db-kernel} \mu(\Fz\Fw) &= \frac12 \frac{1}{1-{zw^*}} \left( \frac{1+b(z)}{1-b(z)}+\frac{1+b(w)^*}{1-b(w)^*}\right)\\
\label{eqn:NC-db-kernel-2}&=\frac{1}{(1-b(z))(1-b(w)^*)}\left( \frac{1-b(z)b(w)^*}{1-{zw^*}}\right)
\end{align}
\end{prop}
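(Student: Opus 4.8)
The plan is to prove first the purely operator-algebraic identity (\ref{eqn:NC-kernel-identity}) in the Cuntz--Toeplitz algebra, and then to read off (\ref{eqn:NC-db-kernel}) and (\ref{eqn:NC-db-kernel-2}) by applying the functional $\mu$ and simplifying scalars.

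For the operator identity, the first step is to observe that
\[
H(z,L)=\Fzplus\Fz=2\Fz-I ,\qquad H(w,L)^*=\Fw\Fwplus=2\Fw-I ,
\]
since $\Fzplus=2I-\Fzminus$ and $\Fwplus=2I-\Fwminus$, and $\Fz$ commutes with $\Fzminus$ (both are norm-convergent power series in $\zlstar$), likewise for the $w$-operators. Hence the claim (\ref{eqn:NC-kernel-identity}) is equivalent to
\[
(1-zw^*)\,\Fz\Fw=\Fz+\Fw-I .
\]
To verify this I would clear the inverses: since $z,w\in\mathbb B^d$ the operators $\Fzminus,\Fwminus$ are invertible (as recorded in the excerpt), so multiplying on the left by $\Fzminus$ and on the right by $\Fwminus$ is reversible. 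The left-hand side collapses to $(1-zw^*)I$, while the right-hand side becomes $\Fwminus+\Fzminus-\Fzminus\Fwminus=(I-{{\bf L}w^*})+(I-\zlstar)-(I-\zlstar)(I-{{\bf L}w^*})$.

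Expanding the product $(I-\zlstar)(I-{{\bf L}w^*})$, the terms linear in $\zlstar$ and in ${{\bf L}w^*}$ cancel, leaving $I-(\zlstar)({{\bf L}w^*})$; this is the only point at which the noncommutative structure is used, for by the orthogonality relations $L_j^*L_k=\delta_{jk}I$ and the identity (\ref{eqn:KEY}) one has $(\zlstar)({{\bf L}w^*})=zw^*\,I$, so the right-hand side also collapses to $(1-zw^*)I$, proving (\ref{eqn:NC-kernel-identity}). (All these rearrangements are legitimate because every operator involved is a bounded power series in $\zlstar$ or in ${{\bf L}w^*}$, each family internally commuting, and the relevant inverses exist for $z,w\in\mathbb B^d$.) For the two scalar displays, apply $\mu$ to (\ref{eqn:NC-kernel-identity}) and pull the scalar $1/(1-zw^*)$ outside: the defining relation (\ref{eqn:nc-herglotz}) gives $\mu(H(z,L))=\frac{1+b(z)}{1-b(z)}-i\,\text{Im}\frac{1+b(0)}{1-b(0)}$, and since a positive linear functional on an operator system is automatically $*$-preserving, $\mu(H(w,L)^*)=\overline{\mu(H(w,L))}=\frac{1+b(w)^*}{1-b(w)^*}+i\,\text{Im}\frac{1+b(0)}{1-b(0)}$. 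Adding, the imaginary constants cancel and (\ref{eqn:NC-db-kernel}) drops out; then (\ref{eqn:NC-db-kernel-2}) follows by putting the two summands over the common denominator $(1-b(z))(1-b(w)^*)$, whose numerator simplifies to $2(1-b(z)b(w)^*)$, absorbing the factor $\tfrac12$.

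I do not expect a genuine obstacle: the content is the single application of (\ref{eqn:KEY}) that replaces $(\zlstar)({{\bf L}w^*})$ by the scalar $zw^*$, which is exactly what makes the $d>1$ computation close in the same way as the classical $d=1$ Cauchy-kernel identity, together with routine bookkeeping of where the complex conjugates sit on the coordinates of $z$ and $w$ and the standard fact that positive functionals on operator systems are self-adjoint.
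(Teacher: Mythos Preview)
Your argument is correct and follows essentially the same route as the paper: both proofs reduce the operator identity to a single application of (\ref{eqn:KEY}), the paper by factoring $\Fz$ and $\Fw$ out of the right-hand side and simplifying the middle, you by rewriting $H(z,L)=2\Fz-I$, $H(w,L)^*=2\Fw-I$ and clearing the inverses. Your treatment of (\ref{eqn:NC-db-kernel}) is in fact more explicit than the paper's, which simply says the scalar identities ``follow immediately'' without mentioning the cancellation of the $i\,\mathrm{Im}\frac{1+b(0)}{1-b(0)}$ terms.
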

\begin{proof} Working with the right-hand side of (\ref{eqn:NC-kernel-identity}), factor out $\Fz$ from the left and $\Fw$ from the right, leaving
\begin{align}
   \begin{split}\frac12 &\frac{H(z,L) + H(w,L)^* }{1-{zw^*} } \\ &= \Fz \left(\frac12 \frac{\Fzplus\Fwminus + \Fzminus \Fwplus }{1-{zw^*}} \right)  \Fw \end{split}\\
&= \Fz \frac12 \frac{2(I-(\zlstar)({\bf L}w^*))}{1-{zw^*}} \Fw \\ &=
   \Fz \Fw
\end{align}
where the last equality follows from (\ref{eqn:KEY}).  Equations (\ref{eqn:NC-db-kernel}) and (\ref{eqn:NC-db-kernel-2}) follow immediately.
\end{proof}

\begin{thm}\label{thm:Vmu}
  Let $b$ be a multiplier with AC state $\mu$.  Then the normalized NC
  Fantappi\`e transform $\mathcal V_\mu$ extends to a unitary operator
  from $P^2_\mu$ onto $\mathcal{H}(b)$.
\end{thm}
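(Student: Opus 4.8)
The plan is to show that $\mathcal{V}_\mu$ is isometric on a dense subspace of $P^2(\mu)$ built from ``reproducing kernel'' elements, read off that its range is dense in $\mathcal{H}(b)$, and finally check that the resulting isometric extension is genuinely the map $\mathcal{V}_\mu$. For $w\in\mathbb{B}^d$ put $h_w:=(I-{\bf L}w^*)^{-1}$; by the geometric series this is a norm-convergent sum of symmetrized monomials and so lies in $\mathcal{S}$ (it is exactly the element appearing in Lemma~\ref{lem:NC-kernels-dense}), so the class $[h_w]$ is defined in $P^2(\mu)$ and $\mathcal{V}_\mu h_w$ is meaningful (the remark following the definition of $\mathcal{V}_\mu$ applies verbatim with $h_w$ in place of a polynomial, since $(I-\zlstar)^{-1}h_w$ still lies in the closure of $\mathcal{S}+\mathcal{S}^*$). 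Since the class map $p\mapsto[p]$ from $\mathcal{S}$ to $P^2(\mu)$ is norm continuous (as $\|[p]\|^2=\mu(p({\bf L})^*p({\bf L}))$ and $\mu$ is bounded) with dense range, and since by Lemma~\ref{lem:NC-kernels-dense} the span of $\{h_w:w\in\mathbb{B}^d\}$ is dense in $\mathcal{S}$, the family $\{[h_w]:w\in\mathbb{B}^d\}$ spans a dense subspace of $P^2(\mu)$.

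The key step is the computation, for which Proposition~\ref{prop:NC-fantappie} does all the work. Write $k^b_w(z):=k^b(z,w)$ for the reproducing kernel of $\mathcal{H}(b)$ at $w$. From the definition of $\mathcal{V}_\mu$ and equation~(\ref{eqn:NC-db-kernel-2}),
\[
(\mathcal{V}_\mu h_w)(z)=(1-b(z))\,\mu\big((I-\zlstar)^{-1}(I-{\bf L}w^*)^{-1}\big)=(1-b(z))\cdot\frac{k^b(z,w)}{(1-b(z))(1-b(w)^*)}=\frac{k^b_w(z)}{1-b(w)^*}.
\]
As $b$ is non-constant, $|b(w)|<1$ throughout $\mathbb{B}^d$, so $\mathcal{V}_\mu h_w=(1-b(w)^*)^{-1}k^b_w$ is a nonzero scalar multiple of a reproducing kernel and in particular lies in $\mathcal{H}(b)$. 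Comparing Gram matrices: using $h_w^*=(I-w{\bf L}^*)^{-1}$ and (\ref{eqn:NC-db-kernel-2}) once more,
\[
\langle[h_w],[h_v]\rangle_{P^2(\mu)}=\mu\big((I-v{\bf L}^*)^{-1}(I-{\bf L}w^*)^{-1}\big)=\frac{k^b(v,w)}{(1-b(v))(1-b(w)^*)},
\]
whereas, since $\langle k^b_w,k^b_v\rangle_{\mathcal{H}(b)}=k^b(v,w)$,
\[
\langle\mathcal{V}_\mu h_w,\mathcal{V}_\mu h_v\rangle_{\mathcal{H}(b)}=\frac{k^b(v,w)}{(1-b(w)^*)(1-b(v))}.
\]
These coincide, so $\mathcal{V}_\mu$ is well defined and isometric on the linear span of $\{[h_w]\}$ (equal combinations of the $[h_w]$ produce combinations of the $k^b_w$ with the same $\mathcal{H}(b)$-norm), and therefore extends uniquely to an isometry $\widetilde{\mathcal{V}}_\mu\colon P^2(\mu)\to\mathcal{H}(b)$.

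It remains to see that $\widetilde{\mathcal{V}}_\mu$ is onto and that it genuinely extends $\mathcal{V}_\mu$. The functions $\{k^b_w:w\in\mathbb{B}^d\}$ span a dense subspace of the reproducing kernel space $\mathcal{H}(b)$, and the scalars $(1-b(w)^*)^{-1}$ are nonzero, so the range of $\widetilde{\mathcal{V}}_\mu$ contains a dense subspace; being also closed, it is all of $\mathcal{H}(b)$, so $\widetilde{\mathcal{V}}_\mu$ is unitary. For the agreement with $\mathcal{V}_\mu$, let $p\in\mathcal{S}$ be a polynomial and $w\in\mathbb{B}^d$; using $k^b_w=(1-b(w)^*)\widetilde{\mathcal{V}}_\mu h_w$, the reproducing property, unitarity of $\widetilde{\mathcal{V}}_\mu$, and the definitions~(\ref{eqn:nc-cauchy-transform-def}) and~(\ref{eqn:NC-transform-def}),
\[
(\widetilde{\mathcal{V}}_\mu[p])(w)=\langle\widetilde{\mathcal{V}}_\mu[p],k^b_w\rangle_{\mathcal{H}(b)}=(1-b(w))\langle[p],[h_w]\rangle_{P^2(\mu)}=(1-b(w))(\mathcal{K}_\mu p)(w)=(\mathcal{V}_\mu p)(w),
\]
so $\widetilde{\mathcal{V}}_\mu[p]=\mathcal{V}_\mu p$ for every polynomial $p$; thus $\widetilde{\mathcal{V}}_\mu$ is the asserted unitary extension of $\mathcal{V}_\mu$ from $P^2(\mu)$ onto $\mathcal{H}(b)$.

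The genuine content is all in Proposition~\ref{prop:NC-fantappie}, which rests on the orthogonality relation~(\ref{eqn:KEY}) and is already in hand, so I do not expect a serious obstacle here. The one place that calls for care is the bookkeeping around the last two displays: verifying that $\{[h_w]\}$ is total in $P^2(\mu)$ (via Lemma~\ref{lem:NC-kernels-dense} and continuity of the class map) and checking that the isometric extension of the kernel correspondence $[h_w]\mapsto(1-b(w)^*)^{-1}k^b_w$ is not some new operator but is precisely the normalized NC Fantappi\`e transform of the statement.
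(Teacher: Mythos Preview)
Your proof is correct and follows essentially the same approach as the paper: both use Proposition~\ref{prop:NC-fantappie} to match the Gram matrix of the NC kernel elements $(I-{\bf L}w^*)^{-1}$ in $P^2(\mu)$ with that of the reproducing kernels $k^b_w$ in $\mathcal H(b)$, invoke Lemma~\ref{lem:NC-kernels-dense} for density, and conclude unitarity. The paper works with the normalized elements $G_w=(1-b(w)^*)(I-{\bf L}w^*)^{-1}$ so that $G_w\mapsto k^b_w$ directly, whereas you carry the scalar $(1-b(w)^*)^{-1}$ through; this is cosmetic. Your final paragraph, verifying via the reproducing property that the abstract unitary extension agrees with $\mathcal V_\mu$ on polynomials, is a point the paper leaves implicit, so your write-up is if anything slightly more complete.
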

\begin{proof}
For each $w\in\mathbb B^n$, define 
\begin{equation}
  G_w(L)=(1-b(w)^*) (I-{{\bf L}w^*})^{-1}.  
\end{equation}
Let us write $[G_w]$ for the vector in $P^2(\mu)$ associated to $G_w$
in the construction of $P^2(\mu)$. By
Lemma~\ref{lem:NC-kernels-dense}, the span of the $[G_w]$ is dense in
$P^2(\mu)$. Then (\ref{eqn:NC-db-kernel-2}) shows that $\langle
[G_w],[G_z]\rangle_\mu = \langle k^b_w,k^b_z\rangle_{\mathcal
  \mathcal{H}(b)}$ for all $z,w\in\mathbb B^n$, so the map sending
$G_w$ to $k^b_w$ is an isometry from the span of the $G_w$ onto the
span of the $k^b_w$, and thus extends uniquely to a unitary from
$P^2_\mu$ onto $\mathcal{H}(b)$.  But by (\ref{eqn:NC-db-kernel-2})
again, the map sending $G_w$ to $k^b_w$ just is the normalized NC
Fantappi\`e transform.
\end{proof}

On $\mathcal A+\mathcal A^*$ there is a distinguished state called the
{\em vacuum state}, which is the vector state induced by the vacuum
vector $\xi_\varnothing$. That is, for polynomials $p,q\in\mathcal A$
we define
\begin{equation}\label{eqn:vacuum-state-def}
  m_{\varnothing}(p+q^*) := \langle (p({\bf L})+q({\bf L})^*)\xi_{\varnothing},\xi_\varnothing\rangle.
\end{equation}
Inspecting the moments we find that, since $\xi_{\varnothing}$ is a
wandering vector for ${\bf L}$, we have $m_{\varnothing}(I)=1$ and
$m_\varnothing (L_w)=0$ for $w\neq \varnothing$.  Thus $m_\varnothing$
can be thought of as an analogue of Lebesgue measure $m$, which is the
measure on $\mathbb T$ (or, state on $C(\mathbb T)$) with moments
$\widehat{m}(1)=1$ and $\widehat{m}(z^n)=0$ for $n\neq 0$. The analogy
is strengthened by noting that if we restrict $m_\varnothing$ to
$\mathcal S+\mathcal S^*$, then $m_\varnothing$ is an AC state for
$b\equiv 0$, and hence $\mathcal H(b)$ is exactly the Drury-Arveson
space $H^2_d$. Explicitly, Theorem~\ref{thm:Vmu} applied to the
function $b\equiv 0$ with associated state $m_\varnothing$ says
\begin{equation}
  \frac{1}{1-{zw^*}} =\langle k_w, k_z\rangle_{H^2_d} = m_\varnothing((I-\zlstar)^{-1}(I-{{\bf L}w^*})^{-1})
\end{equation}
which can be compared to the classical one variable identity
\begin{equation}\label{eqn:szego-inner-prod}
  \frac{1}{1-zw^*}=\int_{\mathbb T}
  \frac{1}{1-z\zeta^*}\frac{1}{1-\zeta w^*}\, dm(\zeta).
\end{equation}
More generally, the equation (\ref{eqn:NC-db-kernel-2}) is in one variable the identity
\begin{equation}
  \frac{1}{(1-b(z))(1-b(w)^*)}\frac{1-b(z)b(w)^*}{1-zw^*} =
  \int_{\mathbb T} \frac{1}{1-z\zeta^*}\frac{1}{1-\zeta w^*}\, d\mu(\zeta)
\end{equation}
(see \cite[III-6]{Sar-book}). Indeed the identity (\ref{eqn:KEY})
means that the proofs given in this section reduce to those of \cite[Chapter
III]{Sar-book} when $d=1$.

Even more, the vacuum state $m_\varnothing$ supports a version of the
Aleksandrov disintegration theorem for the AC states $\mu_\alpha$
associated to a fixed $b$ (Definition~\ref{defn:ac-state-def}). Indeed
the proof in our setting is essentially the same as that given in
\cite[Theorem 9.3.2]{CimMatRos} in the one-variable case.

\begin{thm}[Aleksandrov disintegration for AC
  states]\label{thm:disintegrate} Let $m$ denote normalized Lebesgue
  measure on $\mathbb T$, $m_\varnothing$ the vacuum state on
  $\overline{\mathcal S+\mathcal S^*}$, and
  $\{\mu_{\alpha}\}_{\alpha\in\mathbb T}$ the AC states for a
  contractive multiplier $b$.  Then for all $f\in \overline{\mathcal
    S+\mathcal S^*}$, the function $\alpha\to \mu_\alpha(f)$ is
  continuous in $\alpha$, and
  \begin{equation}\label{eqn:disintegrate}
    \int_{\mathbb T} \mu_\alpha(f)\, dm(\alpha) = m_{\varnothing}(f).
  \end{equation}
\end{thm}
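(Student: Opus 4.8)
The plan is to reduce both assertions to the dense linear span of the NC Herglotz kernels $H(z,L)=\Fzplus\Fz$ together with their adjoints, where the computation is completely explicit, and then propagate the conclusion to all of $\overline{\mathcal S+\mathcal S^*}$ by means of a uniform bound on the norms $\|\mu_\alpha\|$. First I would observe that each $\mu_\alpha$ is a positive linear functional, so $\|\mu_\alpha\|=\mu_\alpha(I)=\text{Re}\,\frac{1+\alpha^*b(0)}{1-\alpha^*b(0)}=\frac{1-|b(0)|^2}{|1-\alpha^*b(0)|^2}$, which is bounded above by a constant $C\ge 1$ that does not depend on $\alpha$. Next, since $H(z,L)=2\Fz-I$, Lemma~\ref{lem:NC-kernels-dense} (and its adjoint) shows that $\operatorname{span}\{\Fz:z\in\mathbb B^d\}$ is dense in $\mathcal S^*$ and $\operatorname{span}\{\Fw:w\in\mathbb B^d\}$ is dense in $\mathcal S$; hence $\operatorname{span}\{H(z,L),\,H(z,L)^*:z\in\mathbb B^d\}$ is dense in $\overline{\mathcal S+\mathcal S^*}$.

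The one substantive computation is the elementary identity: for $|u|<1$,
\[
\int_{\mathbb T}\frac{1+\alpha^*u}{1-\alpha^*u}\,dm(\alpha)=\int_{\mathbb T}\Big(1+2\sum_{k\ge1}(\alpha^*)^k u^k\Big)\,dm(\alpha)=1,
\]
the term-by-term integration being justified since the series converges uniformly on $\mathbb T$ and $\int_{\mathbb T}(\alpha^*)^k\,dm(\alpha)=0$ for $k\ge1$. By Definition~\ref{defn:ac-state-def}, $\mu_\alpha(H(z,L))=\frac{1+\alpha^*b(z)}{1-\alpha^*b(z)}-i\,\text{Im}\,\frac{1+\alpha^*b(0)}{1-\alpha^*b(0)}$, which is visibly continuous in $\alpha$ (all denominators are bounded away from $0$). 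Applying the displayed identity once with $u=b(z)$ and once with $u=b(0)$ (the latter to see that the correction term integrates to $0$) yields $\int_{\mathbb T}\mu_\alpha(H(z,L))\,dm(\alpha)=1$ for every $z\in\mathbb B^d$. On the other hand, the remark preceding the theorem identifies $m_\varnothing|_{\mathcal S+\mathcal S^*}$ as the AC state of $b\equiv 0$, so likewise $m_\varnothing(H(z,L))=1$. Using linearity, together with $\mu_\alpha(A^*)=\overline{\mu_\alpha(A)}$ and $m_\varnothing(A^*)=\overline{m_\varnothing(A)}$, we conclude that for every $f$ in the dense span above the map $\alpha\mapsto\mu_\alpha(f)$ is continuous and $\int_{\mathbb T}\mu_\alpha(f)\,dm(\alpha)=m_\varnothing(f)$.

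Finally, for an arbitrary $f\in\overline{\mathcal S+\mathcal S^*}$ and $\varepsilon>0$, choose $g$ in the dense span with $\|f-g\|<\varepsilon$. Then $|\mu_\alpha(f)-\mu_\alpha(g)|\le C\varepsilon$ for all $\alpha$, so $\alpha\mapsto\mu_\alpha(f)$ is a uniform limit of continuous functions, hence continuous; and since $m$ is a probability measure and $\|m_\varnothing\|=1$,
\[
\Big|\int_{\mathbb T}\mu_\alpha(f)\,dm(\alpha)-m_\varnothing(f)\Big|\le \int_{\mathbb T}|\mu_\alpha(f-g)|\,dm(\alpha)+|m_\varnothing(f-g)|\le (C+1)\varepsilon .
\]
Letting $\varepsilon\to0$ completes the argument. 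I expect the only mildly delicate points to be the density step—matching adjoints of the kernels $\Fw$ with the kernels $\Fz$ via Lemma~\ref{lem:NC-kernels-dense}—and the bookkeeping with the uniform bound $C$ needed to pass from the dense span to the whole closure; the identity $\int_{\mathbb T}\frac{1+\alpha^*u}{1-\alpha^*u}\,dm(\alpha)=1$ is the real input, and it is entirely elementary.
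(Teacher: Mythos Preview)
Your proof is correct and follows essentially the same strategy as the paper: reduce to a dense spanning set in $\overline{\mathcal S+\mathcal S^*}$ where $\mu_\alpha$ can be computed explicitly, verify continuity and the integral identity there, and extend by a uniform norm bound. The only cosmetic difference is that you work with the Herglotz kernels $H(z,L)$ and the elementary identity $\int_{\mathbb T}\frac{1+\alpha^*u}{1-\alpha^*u}\,dm(\alpha)=1$, whereas the paper uses the Szeg\H{o}-type kernels $(I-\zlstar)^{-1}$ and evaluates $\mu_\alpha$ via equation~(\ref{eqn:NC-db-kernel-2}) with $w=0$, integrating against the classical Szeg\H{o} inner product formula; since $H(z,L)=2(I-\zlstar)^{-1}-I$, the two spanning sets coincide and the computations are equivalent.
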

\begin{proof}
  Using the positivity of the $\mu_\alpha$ and
  Lemma~\ref{lem:NC-kernels-dense}, it suffices to prove the theorem
  when $f=(I-\zlstar)^{-1}$ for fixed $|z|<1$.  In this case by
  (\ref{eqn:NC-db-kernel-2}) we have
  \begin{equation}\label{eqn:1balpha}
    \mu_\alpha(f) =\frac{1-b(z)b(0)^*}{(1-\alpha^*b(z))(1-\alpha b(0)^*)}
  \end{equation}
  which is continuous in $\alpha$ (note $z$ is fixed here and
  $|b(z)|<1$). On the one hand, by definition of $m_\varnothing$ we
  have $m_\varnothing(f)=1$.  On the other hand, integrating
  (\ref{eqn:1balpha}) we have (using the classical formula
  (\ref{eqn:szego-inner-prod}) for the inner product of Szeg\H{o}
  kernels)
\begin{align}
  \int_{\mathbb T} \frac{1-b(z)b(0)^*}{(1-\alpha^*b(z))(1-\alpha b(0)^*)}\, dm(\alpha) &= (1-b(z)b(0)^*) \int_{\mathbb T} \frac{1}{1-\alpha^*b(z)}\frac{1}{1-\alpha b(0)^*}\, dm(\alpha) \\
&= \frac{1-b(z)b(0)^*}{1-b(z)b(0)^*} = 1.
\end{align}
We conclude that
\begin{equation}
  \int_{\mathbb T} \frac{1}{1-b(z)\alpha^*} \, dm(\alpha) = 1 = m_\varnothing(f).
\end{equation}
\end{proof}

\section{The GNS construction in $P^2(\mu)$}\label{sec:moreGNS}  In
this section we carry out a version of the GNS construction in the
noncommutative $P^2(\mu)$ spaces of Section~\ref{sec:p2mu}. This
construction and the notions arising out of it (particularly that of a
{\em quasi-extreme} multiplier) will be central to the rest of the
paper.  In one variable, if $\mu$ is a measure on the circle then
multiplication by the independent variable $\zeta$ is an isometric
operator on $P^2(\mu)$, which is unitary in the case that
$P^2(\mu)=L^2(\mu)$ (equivalently, $P^2_0(\mu)=P^2(\mu)$).  In the
present setting the fact that $\mathcal S$ (the symmetric part of the
NC disk algebra) is not an algebra will complicate matters. In the end
we will obtain a contractive tuple ${\bf S}$ acting on a closed
subspace $P^2_0(\mu)$ of $P^2(\mu)$, which will be coisometric in the
case that $P^2(\mu)=P^2_0(\mu)$.

The GNS construction for states on the full Cuntz-Toeplitz operator
system $\mathcal A+\mathcal A^*$ is well known; we recount it briefly.
Suppose $H$ is a Hilbert space and $A\subset B(H)$ is a linear
subspace containing $I$.  Then
\begin{equation}\label{eqn:astar-plus-a}
  A^*+A =\{ b^*+a |a,b\in A\} 
\end{equation}
and
\begin{equation}\label{eqn:star-a}
A^*A={\rm span } \{b^*a | a,b\in A\}  
\end{equation}
are operator systems containing $A$, and since $A$ is unital we have $A^*+A\subseteq A^*A$.  

For $A=\mathcal A$, the noncommutative disk algebra, consider the
operator system $\mathcal M=\mathcal A^*+\mathcal A$.  One sees easily
from the relations (\ref{eqn:cuntzrels}) that for all words $w,v$, the
operator $L_w^*L_v$ belongs either to $\mathcal A$ or $\mathcal A^*$.
It follows that
\begin{equation}
  \mathcal A^*\mathcal A\subset {\overline{\mathcal A^*+\mathcal A}}
\end{equation}
This fact allows us to construct a ``left regular representation'' of
$\mathcal A$ starting from any state $\nu$ on $\mathcal A+\mathcal
A^*$.  (Here we abuse the terminology slightly and allow ``state'' to
mean any positive linear functional; it need not be normalized to have
$\nu(I)=1$.) In what follows we will often elide the distinction
between positive functionals on $\mathcal A+\mathcal A^*$ and their
unique extensions to positive functionals on $\overline{\mathcal
  A+\mathcal A^*}$; in practice this should cause no difficulties. A
similar remark will of course be in force for $\mathcal S+\mathcal S^*$.

Given $\nu$, the pairing on $\mathcal A\times \mathcal A$ given by
\begin{equation}\label{eqn:gns-pairing-M}
  \langle b,c\rangle:= \nu(c^*b)
\end{equation}
is a pre-inner product on $\mathcal A$; quotienting by null vectors
and completing gives a Hilbert space $H_\nu$.  For $a\in \mathcal A$,
let $[a]$ denote the corresponding vector in $H_\nu$.  Now it is
routine to check that for each $a\in\mathcal A$, the equation
\begin{equation}\label{eqn:lrrep-def}
  \pi(a)[b]:=[ab]
\end{equation}
(that is, ``left multiplication by $a$'') defines a bounded linear
operator on $H_\nu$, and the map $\pi: \mathcal A\to B(H_\nu)$ is a
completely contractive unital homomorphism.  Moreover, it is not hard
to show that the $d$-tuple $\pi(\L)=(\pi(L_1),\dots \pi(L_d))$ is a
row isometry.  Indeed, we have for all $b,c\in \mathcal A$ and all $i,
j=1,\dots d$
\begin{align}
\langle \pi(L_i)^* \pi(L_j)[b],[c]\rangle &= \langle \pi(L_j)[b],\pi(L_i)[c]\rangle \\
&= \nu(c^*L_i^*L_jb) \\
&= \delta_{ij}\mu(c^*b) \\
&=\delta_{ij} \langle[b],[c]\rangle
\end{align}
so $\pi(L_j)^*\pi(L_i)=\delta_{ij} I$.  

By definition, for $a,b,c\in\mathcal A$ we have $\langle
\pi(a)b,c\rangle := \nu(c^*ab)$.  In particular, fixing a word $w$ and
taking $a=L_w$, $b=c=I$, each state $\nu$ is a vector state in the GNS
representation:
\begin{equation}
\nu(L_w) = \langle \pi(L_w)[I],[I]\rangle.  
\end{equation}

\subsection{The GNS construction in $\mathcal S+\mathcal S^*$}\label{sec:GNS-main}  The next goal is to imitate the above construction with the NC disk algebra $\mathcal A$ replaced by its symmetric part $\mathcal S$.  The fact that $\mathcal S$ is not an algebra means the construction must be modified; it is Lemma~\ref{lem:bimodule} that makes it possible at all.

 Let $\mathcal S_0$ be the subspace of $\mathcal S$ given by
\begin{equation}
  \mathcal S_0 := {\rm span} \{ L^{(\nn)} : |\nn|\geq 1\};
\end{equation}
so that $\mathcal S={\rm span}\{I,\mathcal S_0\}$.  Let $P^2_0(\mu)$
denote the closed subspace of $P^2(\mu)$ spanned by the set
$\{[p]:p\in\mathcal S_0\}$ and $P_0:P^2(\mu)\to P^2_0(\mu)$ the
orthogonal projection.  (It is possible that $P^2_0(\mu)=P^2(\mu)$.)
\begin{prop}\label{prop:Si-gns}
Let $p\in\mathcal S_0$ be a polynomial. For each $j=1, \dots d$ the map
\begin{equation}\label{eqn:pre-S0}
 [p] \to [L_j^*p({\bf L})]
\end{equation}
is well defined, and extends to a bounded linear operator from
$P_0^2(\mu)$ to $P^2(\mu)$. Morevoer the operator ${\bf S}=(S_1,\dots S_d)$ defined by
\begin{equation}\label{eqn:S0-def}
  S_j^*[p]:= P_0[L_j^*p({\bf L})]
\end{equation}
is a row contraction on $P_0^2(\mu)$.
\end{prop}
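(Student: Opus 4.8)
The statement has three parts: well-definedness of the map $[p]\mapsto[L_j^*p(\mathbf L)]$, its boundedness from $P_0^2(\mu)$ into $P^2(\mu)$, and the assertion that the tuple $\mathbf S=(S_1,\dots S_d)$ with $S_j^*[p]:=P_0[L_j^*p(\mathbf L)]$ is a row contraction on $P_0^2(\mu)$. The key tool throughout is Lemma~\ref{lem:bimodule}, which guarantees that $L_j^*p(\mathbf L)$ lies in $\overline{\mathcal S+\mathcal S^*}$ whenever $p\in\mathcal S$, so that $\mu$ evaluates on the relevant products. I would organize the argument around a single inequality: for any $p\in\mathcal S_0$,
\begin{equation}\label{eqn:plan-key}
  \sum_{j=1}^d \mu\big(p(\mathbf L)^*L_jL_j^*p(\mathbf L)\big) \leq \mu\big(p(\mathbf L)^*p(\mathbf L)\big).
\end{equation}
The right side is $\|[p]\|^2_{P^2(\mu)}$, and since $\mathbf L\mathbf L^*=\sum_j L_jL_j^*$ is the orthogonal projection onto $\xi_\varnothing^\perp$, which acts as the identity on the range of any $p(\mathbf L)$ with $p\in\mathcal S_0$ (because $p(\mathbf L)\xi_w$ is supported on words of length $\geq 1$), the left side is in fact \emph{equal} to $\mu(p(\mathbf L)^*p(\mathbf L))$. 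So \eqref{eqn:plan-key} holds with equality. Wait --- one must be careful: $p(\mathbf L)^*(\mathbf L\mathbf L^*)p(\mathbf L)$ is computed inside $\overline{\mathcal S+\mathcal S^*}$ via Lemma~\ref{lem:bimodule}, and the claim $\mathbf L\mathbf L^* p(\mathbf L) = p(\mathbf L)$ for $p\in\mathcal S_0$ is an operator identity on Fock space that can be checked directly on basis vectors; then applying $\mu$ to both sides gives equality in \eqref{eqn:plan-key}.

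From \eqref{eqn:plan-key} with equality, each summand satisfies $\mu(p(\mathbf L)^*L_jL_j^*p(\mathbf L))\leq \mu(p(\mathbf L)^*p(\mathbf L))$, which is precisely the statement that $[p]\mapsto[L_j^*p(\mathbf L)]$ is norm-nonincreasing; in particular if $[p]=0$ in $P^2(\mu)$ then $\mu(p(\mathbf L)^*L_jL_j^*p(\mathbf L))=0$, so $[L_j^*p(\mathbf L)]=0$, giving well-definedness. Boundedness (indeed contractivity) of the resulting map into $P^2(\mu)$, and hence of $S_j^* = P_0\circ(\,\cdot\,)$, is immediate from the same inequality together with $\|P_0\|\leq 1$. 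For the row-contraction claim, I would verify $\sum_j S_jS_j^*\leq I$ on $P_0^2(\mu)$ by testing against vectors $[p]$, $p\in\mathcal S_0$ (dense in $P_0^2(\mu)$): $\sum_j\langle S_jS_j^*[p],[p]\rangle = \sum_j\|S_j^*[p]\|^2 = \sum_j\|P_0[L_j^*p(\mathbf L)]\|^2 \leq \sum_j\|[L_j^*p(\mathbf L)]\|^2 = \sum_j\mu(p(\mathbf L)^*L_jL_j^*p(\mathbf L)) = \mu(p(\mathbf L)^*p(\mathbf L)) = \|[p]\|^2$, using \eqref{eqn:plan-key} at the last-but-one step. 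This gives $\sum_j S_jS_j^*\leq I$.

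**The main obstacle.** The only genuinely delicate point is the bookkeeping in \eqref{eqn:plan-key}: one must justify that the formal product $p(\mathbf L)^*\big(\sum_j L_jL_j^*\big)q(\mathbf L)$, for $p,q\in\mathcal S_0$, can be expanded using Lemma~\ref{lem:bimodule} into an element of $\overline{\mathcal S+\mathcal S^*}$ on which $\mu$ is defined, and that this expansion genuinely coincides with $p(\mathbf L)^*q(\mathbf L)$ (equivalently, that $\mathbf L\mathbf L^*$ acts as the identity after multiplication by $q(\mathbf L)$ on the left). This is a purely algebraic/operator-theoretic identity on the full Fock space --- $\mathbf L\mathbf L^*$ is the projection off the vacuum, and $L_j^{(\mathbf n)}$ with $|\mathbf n|\geq 1$ already maps into $\xi_\varnothing^\perp$ --- so it holds; but since $\mathcal S$ is not an algebra, I want to phrase it as an identity among operators on $\mathcal F_d$ first and only then apply the (positive, hence order- and adjoint-respecting) functional $\mu$, rather than manipulating $\mu$ formally. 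Everything else is a routine density/Cauchy--Schwarz argument using \eqref{eqn:CS-for-mu}.
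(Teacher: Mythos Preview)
Your proof is correct and follows essentially the same approach as the paper; in particular your row-contraction computation is identical to the paper's. The only minor difference is in the well-definedness/boundedness step: the paper uses the Cauchy--Schwarz inequality \eqref{eqn:CS-for-mu} together with $L_j^*L_j=I$ to bound the bilinear form $([p],[q])\mapsto\mu(q(\mathbf L)^*L_j^*p(\mathbf L))$ directly, whereas you deduce the individual bound $\|[L_j^*p(\mathbf L)]\|\le\|[p]\|$ from the summed identity $\sum_j L_jL_j^*\,p(\mathbf L)=p(\mathbf L)$ and positivity of each summand---both arguments are standard and equivalent in strength here.
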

\begin{proof}
By Lemma~\ref{lem:bimodule}, if $p\in \mathcal S_0$ then $L_i^*p({\bf
  L})\in\mathcal S$ for each $i=1,\dots n$, and again by the lemma
$q({\bf L})^*L_i^*p({\bf L})\in \mathcal S+\mathcal S^*$, so belongs
to the domain of $\mu$.  For each $i=1,\dots d$, the pairing
\begin{equation}
  ([p],[q])_i =\mu(q({\bf L})^*L_i^*p({\bf L})), \qquad p,q\in\mathcal S_0
\end{equation}
gives a well-defined, bounded bilinear form on the span of
$\{[p]:p\in\mathcal S_o\}$ in $P^2_0(\mu)$.  Indeed, since
$L_i^*L_i=I$ for each $i$, the Cauchy-Schwarz inequality for $\mu$
gives
\begin{equation}\label{eqn:cs-gns}
  |([p],[q])_i| =|\mu(q({\bf L})^*L_i^*p({\bf L}))| \leq \mu(q({\bf L})^*L_i^*L_iq({\bf L}))^{1/2}\mu(p({\bf L})^*p({\bf L}))^{1/2} =\|[p]\| \|[q]\|
\end{equation}
so $(\cdot,\cdot)_i$ is well defined and bounded (with norm at most
$1$).  Thus each of the maps (\ref{eqn:pre-S0}) is bounded, and the
operators $S_j^*$ of (\ref{eqn:S0-def}) are bounded.  To see that
${\bf S}=(S_1,\dots S_n)$ is a row contraction, we have for all
$p\in\mathcal S_0$,
\begin{align*}
  \langle \sum_{i=1}^d S_iS_i^* [p],[p]\rangle_{P^2_0(\mu)} &= \sum_{i=1}^d \langle P_0[L_i^*p],P_0[L_i^*p]\rangle_{P^2(\mu)} \\
&\leq \sum_{i=1}^d   \langle [L_i^*p],[L_i^*p]\rangle_{P^2(\mu)} \\
&= \sum_{i=1}^d   \mu(p({\bf L})^*L_iL_i^*P(L))\\
&= \mu(p({\bf L})^*p({\bf L}))\\
&= \langle [p],[p]\rangle_{P^2_0(\mu)}
\end{align*}
(Equality holds in the second-to-last line since $p\in\mathcal S_0$,
which entails $p({\bf L})=\sum_{j=1}^d L_jL_J^*p({\bf L})$).
\end{proof}

{\bf Remark:} It is very important to observe that at this point, we
cannot assert a GNS-style representation of $\mu$ in terms of
${\bf S}$; that is, the above construction does {\em not} imply that
\begin{equation}\label{eqn:bad-GNS-for-S}
\mu(L^{(\nn)})=\langle S^{(\nn)}[I],[I]\rangle_\mu  
\end{equation}
Indeed, as things stand the equation (\ref{eqn:bad-GNS-for-S}) does
not even make sense, since ${\bf S}$ is only defined on $P^2_0(\mu)$,
which need not contain $[I]$. However such a representation of $\mu$
is available when $[I]$ belongs to $P^2_0(\mu)$ (that is, when
$P^2_0(\mu)=P^2(\mu)$).  To prove this it will be helpful to consider
extensions $\nu$ of $\mu$ to the full Cuntz-Toeplitz operator system
$\mathcal A+\mathcal A^*$, and compare the GNS tuple ${\bf U}:=\pi({\bf L})$ to
${\bf S}^\mu$.  More precisely, let $\nu$ be a state on $\mathcal
A+\mathcal A^*$ and let us write $Q^2(\nu)$ for the GNS space
associated to $\nu$.  Inside $Q^2(\nu)$ there is a subspace
$Q_0^2(\nu)$ formed by taking the closed span of the elements
\begin{equation}
  \{ [L_w]:|w|\geq 1\}
\end{equation}
in $Q^2(\nu)$.  We let $Q_0$ denote the orthogonal projection onto
$Q_0^2(\nu)$.  Now, if $\mu$ is a state on $\mathcal S+\mathcal S^*$
and $\nu$ extends $\mu$, the inclusion $\mathcal S\subset \mathcal A$
induces isometric inclusions of the Hilbert spaces
\begin{equation}
  P^2(\mu)\subset Q^2(\nu), \quad P^2_0(\mu)\subset Q^2_0(\nu).
\end{equation}
Let us write ${\bf U}=(U_1, \dots ,U_d):=(\pi(L_1), \dots ,\pi(L_d)$
for the GNS tuple for $\nu$ acting in $Q^2(\mu)$. By construction the
subspace $Q^2_0(\nu)$ is invariant for the $U_j$, so we can define
${\bf V}$ to be the restriction of ${\bf U}$ to $Q_0^2(\nu)$.

We now consider the following definition:
\begin{defn}
  Let $\mu$ be a state on $\mathcal S^*+\mathcal S$ and $\nu$ be a
  state on $\mathcal A^*+\mathcal A$ extending $\mu$, and ${\bf S},
  {\bf U}$ the GNS operators associated to $\mu$ and $\nu$
  respectively.  The extension $\mu$ will be called {\em tight} if
  ${\bf V}={\bf U}|_{Q^2_0(\nu)}$ is a dilation of ${\bf S}$.  A state
  $\nu$ on $\mathcal A+\mathcal A^*$ is called {\em tight} if it is a
  tight extension of its restriction $\mu=\nu|_{\mathcal S^*+\mathcal
    S}$.
\end{defn}
In other words, starting from a state $\mu$ on the symmetric operator
system $\mathcal S+\mathcal S^*$, we have two ways of constructing row
contractions on $P^2_0(\mu)$. One is to construct the GNS tuple ${\bf
  S}$ of Proposition~\ref{prop:Si-gns}. The other is to extend the
state $\mu$ to a state $\nu$ on $\mathcal A+\mathcal A^*$, form the
GNS tuple ${\bf U}$ on $Q^2(\mu)$, then compress this tuple to
$P^2_0(\mu)\subset Q^2(\mu)$.  To call the extension $\nu$ tight is to
say these constructions coincide.  We will also see shortly that if
${\bf V}$ is a dilation of ${\bf S}$, then it is necessarily a minimal
dilation of ${\bf S}$.

At present we do not know whether or not tight extensions always
exist.  The next theorem gives a somewhat more transparent spatial
condition which characterizes tight extensions.

\begin{thm}\label{thm:tight-spatial}
  Let $\mu$ be a state on $\mathcal S+\mathcal S^*$ and $\nu$ an
  extension of $\mu$ to $\mathcal A+\mathcal A^*$.  Then $\nu$ is a
  tight extension if and only if $P_0[I]=Q_0[I]$.
\end{thm}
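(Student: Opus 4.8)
The plan is to show that the two row contractions one can build on $P^2_0(\mu)$ — the intrinsic GNS tuple ${\bf S}$ from Proposition~\ref{prop:Si-gns}, and the compression to $P^2_0(\mu)$ of the GNS tuple ${\bf V}={\bf U}|_{Q^2_0(\nu)}$ — agree if and only if the vectors $P_0[I]$ and $Q_0[I]$ coincide. Since $P^2_0(\mu)$ sits isometrically inside $Q^2_0(\nu)$, both $P_0$ and $Q_0$ are projections onto $P^2_0(\mu)$, but taken inside different ambient spaces; the content of the theorem is that testing their action on the single vector $[I]$ detects whether the dilation property holds. I would first unwind what ``${\bf V}$ is a dilation of ${\bf S}$'' means concretely: it asks that $P^2_0(\mu)$ be coinvariant for each $U_j$ and that $P_0 U_j^*|_{P^2_0(\mu)} = S_j^*$, equivalently that for all $p,q\in\mathcal S_0$ one has $\langle U_j^*[p],[q]\rangle_{Q^2(\nu)} = \langle S_j^*[p],[q]\rangle_{P^2(\mu)}$, i.e. $\nu(q({\bf L})^*L_j^* p({\bf L})) = \mu(q({\bf L})^*L_j^* p({\bf L}))$ — which is automatic since $\nu$ extends $\mu$ and $q({\bf L})^*L_j^*p({\bf L})\in\mathcal S+\mathcal S^*$ by Lemma~\ref{lem:bimodule}. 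So the genuine issue is coinvariance: ${\bf V}$ is a dilation of ${\bf S}$ exactly when $P^2_0(\mu)$ is coinvariant for ${\bf U}$ restricted to $Q^2_0(\nu)$, i.e. when $U_j^*$ maps $P^2_0(\mu)$ into $P^2_0(\mu)$ for each $j$.

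**Key steps.** First I would translate coinvariance of $P^2_0(\mu)$ under the $U_j^*$ into a statement about $[I]$. For $p\in\mathcal S_0$ write, using $p({\bf L}) = \sum_k L_k L_k^* p({\bf L})$, the decomposition $U_j^*[p] = [L_j^* p({\bf L})]$ in $Q^2(\nu)$; the element $L_j^* p({\bf L})$ lies in $\mathcal S$ but may have a nonzero component along $[I]$ (its ``constant term''). Concretely, $L_j^* p({\bf L}) = c_j\, I + r_j({\bf L})$ with $r_j\in\mathcal S_0$ and $c_j$ the appropriate scalar coefficient, so $U_j^*[p] = c_j[I] + [r_j]$ with $[r_j]\in Q^2_0(\nu) \supset P^2_0(\mu)$. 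Thus $U_j^*[p]$ lies in $P^2_0(\mu)$ for every such $p$ if and only if the scalar multiples $c_j[I]$ are ``absorbed'', i.e. iff $c_j(\,[I] - Q_0[I]\,)$ is already handled — more precisely, the obstruction to $U_j^*[p]\in P^2_0(\mu)$ is precisely the component of $c_j[I]$ orthogonal to $P^2_0(\mu)$, which is $c_j(Q_0[I] - P_0[I])$ after noting that the $P^2_0(\mu)$-component of $[I]$ inside $Q^2(\nu)$ is $P_0[I]$ while its $Q^2_0(\nu)$-component is $Q_0[I]$. Pinning down exactly this identity is the crux. Then I would run the equivalence: if $P_0[I]=Q_0[I]$, every $U_j^*[p]$ lands in $P^2_0(\mu)$, coinvariance holds, and combined with the matching-of-compressions observation above, ${\bf V}$ dilates ${\bf S}$; conversely, if ${\bf V}$ dilates ${\bf S}$, then coinvariance forces $c_j(Q_0[I]-P_0[I]) = 0$ for all $j$ and all $p\in\mathcal S_0$, and one checks the coefficients $c_j$ are not all zero (e.g. take $p = L^{(\nn)}$ with $|\nn|=1$, so $L_j^* p({\bf L})$ has a nonzero constant term $\tfrac{1}{1}I$ for a suitable choice), giving $Q_0[I]=P_0[I]$.

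**Main obstacle.** The delicate point is the geometric bookkeeping in $Q^2(\nu)$: I must carefully separate the role of $P_0$, which is the projection of $Q^2(\nu)$ onto $P^2_0(\mu)$ realized as a subspace of $P^2(\mu)\subset Q^2(\nu)$, from $Q_0$, the projection onto $Q^2_0(\nu)$, and then observe that the ``constant term'' scalars $c_j$ are exactly what couples them. It is easy to write something that looks right but conflates ``$[I]$ has no $\mathcal S_0$ part in $P^2(\mu)$'' with ``$[I]$ has no component in $Q^2_0(\nu)$''; these differ precisely because $Q^2_0(\nu)$ is larger, and that difference is the whole theorem. I would therefore be scrupulous about which inner product each projection uses and would double-check the computation that the failure of coinvariance is measured by $\|Q_0[I]-P_0[I]\|$ via an explicit test polynomial in $\mathcal S_0$ with nonvanishing constant coefficient after applying $L_j^*$. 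Everything else — the matching of compressions, the membership facts from Lemma~\ref{lem:bimodule}, the isometric inclusions $P^2(\mu)\subset Q^2(\nu)$ — is routine once this identity is in hand.
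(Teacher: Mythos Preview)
Your approach is essentially correct and lands on the same computation as the paper, but it is considerably more roundabout, and the exposition contains a confusion that you should clean up.

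The paper does not split the dilation condition into ``coinvariance'' plus ``compression matching.'' It simply writes out what $V_j^*[p]=S_j^*[p]$ means: since $V_j^*[p]=Q_0 U_j^*[p]=Q_0[L_j^*p({\bf L})]$ and $S_j^*[p]=P_0[L_j^*p({\bf L})]$, the dilation condition is exactly $Q_0[L_j^*p({\bf L})]=P_0[L_j^*p({\bf L})]$ for all $p\in\mathcal S_0$ and all $j$. Writing $L_j^*p({\bf L})=c_jI+r_j({\bf L})$ with $r_j\in\mathcal S_0$, this collapses immediately to $c_j(Q_0[I]-P_0[I])=0$; taking $p=L_j$ gives $c_j=1$ and finishes. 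Your ``compression matching is automatic'' observation is true but superfluous: once you write both sides as $Q_0$ and $P_0$ applied to the \emph{same} element of $P^2(\mu)$, no separate argument is needed.

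The confusion to fix: you repeatedly slide between $U_j^*$ and $V_j^*=Q_0U_j^*|_{Q^2_0(\nu)}$. You write that the issue is ``when $U_j^*$ maps $P^2_0(\mu)$ into $P^2_0(\mu)$,'' and then that the obstruction to $U_j^*[p]\in P^2_0(\mu)$ is $c_j(Q_0[I]-P_0[I])$. Neither is quite right: the component of $[I]$ orthogonal to $P^2_0(\mu)$ is $[I]-P_0[I]$, not $Q_0[I]-P_0[I]$, and the operator whose invariance matters is $V_j^*$, not $U_j^*$. The quantity $c_j(Q_0[I]-P_0[I])$ is exactly $V_j^*[p]-S_j^*[p]$, which is the object you actually want to be zero; your path to it is correct in outcome but not in the stated reasoning. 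If you replace the coinvariance discussion with the direct identity $V_j^*[p]-S_j^*[p]=c_j(Q_0[I]-P_0[I])$, your proof becomes both correct and essentially identical to the paper's.
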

\begin{proof}
  Let ${\bf U}=(U_1, \dots U_d)$ be the GNS tuple for $\nu$.  By
  definition the extension is tight if and only if the restriction of
  the $U$'s to $Q^2_0(\nu)$ form a dilation of the $S$'s.  This
  happens if and only if for all polynomials $p\in \mathcal S_0$,
  \begin{equation}
    Q_0U_i^*[p] =S_i^*[p],
  \end{equation}
or more explicitly 
\begin{equation}\label{eqn:tight-spatial-key}
  Q_0[L_i^*p({\bf L})] =P_0[L_i^*p({\bf L})]
\end{equation}
Of course, (\ref{eqn:tight-spatial-key}) will always hold when
$L_i^*p({\bf L})\in\mathcal S_0$; the $p$'s with this property are the span
of the set $\{ L^{(\nn)} : |\nn|\geq 2\}$.  So what is at issue are
the cases $p({\bf L})=L_j$.  In this case, if $i\neq j$, then both sides of
(\ref{eqn:tight-spatial-key}) are $0$, while if $i=j$ we obtain the
condition $Q_0[I]=P_0[I]$.  
\end{proof}


\begin{prop}
  If $\nu$ is a tight extension of $\mu$, then ${\bf V}$ is a minimal
  dilation of ${\bf S}$.
\end{prop}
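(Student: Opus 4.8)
The plan is to show that the minimal $\mathbf{V}$-invariant subspace of $Q^2_0(\nu)$ containing $P^2_0(\mu)$ is all of $Q^2_0(\nu)$. Since $\mathbf{V}$ is the restriction of the row isometry $\mathbf{U} = \pi(\mathbf{L})$ to the invariant subspace $Q^2_0(\nu)$, minimality of the dilation amounts to the identity
\[
  \bigvee_{w\in\Fd} V_w\bigl(P^2_0(\mu)\bigr) = Q^2_0(\nu).
\]
First I would recall that $Q^2_0(\nu)$ is by definition the closed span of $\{[L_w] : |w|\geq 1\}$ in $Q^2(\nu)$, so it suffices to show that every $[L_w]$ with $|w|\geq 1$ lies in the span on the left. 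Writing a word $w = i_1 i_2 \cdots i_m$ of length $m\geq 1$, we have $[L_w] = U_{i_1}U_{i_2}\cdots U_{i_{m-1}}[L_{i_m}] = V_{w'}[L_{i_m}]$ where $w' = i_1\cdots i_{m-1}$ (using that $Q^2_0(\nu)$ is $\mathbf{U}$-invariant and that $[L_{i_m}]\in Q^2_0(\nu)$). Thus it is enough to handle the length-one generators: I need each $[L_j]$, $j=1,\dots,d$, to lie in $\bigvee_w V_w(P^2_0(\mu))$.

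The key step uses tightness via Theorem~\ref{thm:tight-spatial}: tightness gives $Q_0[I] = P_0[I]$. Now observe $P_0[I]\in P^2_0(\mu)\subset Q^2_0(\nu)$, so $Q_0[I]\in Q^2_0(\nu)$, and in fact $Q_0[I]$ is the projection of $[I]$ onto $Q^2_0(\nu)$. I would then compute $V_j (Q_0[I]) = U_j Q_0[I]$. Since $U_j[I] = [L_j]$ and $U_j$ maps $Q^2_0(\nu)$ into itself, while $U_j$ maps $(Q^2_0(\nu))^\perp \cap Q^2(\nu)$ — which is spanned by $[I]$ together with (the image of) lower-order pieces — appropriately, the point is that $U_j([I] - Q_0[I])$ should land in a controlled place. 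Concretely: $[I] - Q_0[I]$ is orthogonal to $Q^2_0(\nu)$, hence it is a scalar multiple of the component of $[I]$ orthogonal to $Q^2_0(\nu)$; and $U_j$ applied to the orthocomplement vector, combined with $U_j Q_0[I]$, reconstructs $[L_j] = U_j[I]$. So $[L_j] = U_j Q_0[I] + U_j([I]-Q_0[I])$, and since $U_j Q_0[I] = V_j(P_0[I]) \in \bigvee_w V_w(P^2_0(\mu))$, I need to argue the remaining term also lies in that span, or better, is zero. In fact $[I]-Q_0[I]$ is orthogonal to every $[L_w]$, $|w|\geq 1$; testing against $[L_j]$ gives $\langle [I]-Q_0[I], [L_j]\rangle_\nu = \langle[I],[L_j]\rangle - \langle Q_0[I],[L_j]\rangle$, and since $U_j^*[I] = 0$ (as $\xi_\varnothing$-type reasoning: $L_j^* I = 0$, so $[L_j]\perp[I]$ after pairing... more precisely $\langle[I],[L_j]\rangle_\nu = \nu(L_j^*) = \overline{\nu(L_j)}$) one sees the orthogonality forces the orthocomplement component to be annihilated by each $U_j^*$. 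Then $\|U_j([I]-Q_0[I])\|^2 = \langle U_j^*U_j([I]-Q_0[I]), [I]-Q_0[I]\rangle = \|[I]-Q_0[I]\|^2$, but also $\sum_j \|U_j([I]-Q_0[I])\|^2 \le \|[I]-Q_0[I]\|^2$ since $\mathbf{U}$ is a row isometry — wait, that gives $d\|[I]-Q_0[I]\|^2 \le \|[I]-Q_0[I]\|^2$, forcing $[I]=Q_0[I]$ when $d>1$; for $d=1$ one argues directly. Either way the orthocomplement term drops out or is absorbed, and $[L_j]\in\bigvee_w V_w(P^2_0(\mu))$.

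The main obstacle I anticipate is the bookkeeping around the vector $[I]-Q_0[I]$: one must be careful that $Q^2(\nu)$ is not simply $\mathbb{C}[I]\oplus Q^2_0(\nu)$ in general, because $[I]$ need not be orthogonal to $Q^2_0(\nu)$, and the orthocomplement of $Q^2_0(\nu)$ in $Q^2(\nu)$ could in principle be larger than one-dimensional depending on $\nu$. The clean way around this is: let $e = [I] - Q_0[I]$, the component of $[I]$ perpendicular to $Q^2_0(\nu)$; show $U_j^* e = 0$ for all $j$ (using that $L_j^*$ pairs $\mathcal{A}$ into $\mathcal{A}^*$ and the defining relations, so $\langle e, U_j^* y\rangle = \langle U_j e, y\rangle$ must vanish for $y\in Q^2_0(\nu)$, combined with $U_j^* e \perp$ the right things); conclude via the row-isometry norm identity that $e=0$, hence $[I]\in Q^2_0(\nu)$, hence $Q_0[I]=[I]$ and $[L_j]=V_j[I] = V_j(P_0[I])$, and minimality follows immediately since then every $[L_w]$ is manifestly in $\bigvee_w V_w(P^2_0(\mu))$. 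So the real content is proving $e=0$ under tightness — i.e., that tightness forces $[I]\in Q^2_0(\nu)$, equivalently $[I]\in P^2_0(\mu)$, equivalently $P^2_0(\mu) = P^2(\mu)$ — and then minimality of the dilation is automatic.
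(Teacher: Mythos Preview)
You have overlooked the one-line observation that makes this proposition immediate: each $[L_j]$ already belongs to $P^2_0(\mu)$, because $L_j = L^{(e_j)}$ is a symmetric monomial of degree $1$ and hence lies in $\mathcal S_0$. Once this is noted, minimality follows at once: $P^2_0(\mu)$ contains $[L_1],\dots,[L_d]$, and applying all words $V_w = U_w|_{Q^2_0(\nu)}$ to these produces every $[L_v]$ with $|v|\geq 1$, whose span is $Q^2_0(\nu)$ by definition. This is exactly the paper's argument, and it does not use tightness at all beyond the fact that tightness is the hypothesis guaranteeing $\mathbf V$ dilates $\mathbf S$ in the first place.

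Your detour is not merely longer; it contains genuine errors. First, the inequality $\sum_j \|U_j e\|^2 \leq \|e\|^2$ that you invoke is false: the row-isometry condition gives $\sum_j U_j U_j^* \leq I$, hence $\sum_j \|U_j^* x\|^2 \leq \|x\|^2$, but in the forward direction each $U_j$ is an isometry, so in fact $\sum_j \|U_j e\|^2 = d\,\|e\|^2$. Second, and more seriously, your eventual conclusion that ``tightness forces $[I]\in Q^2_0(\nu)$, equivalently $P^2_0(\mu)=P^2(\mu)$'' would say that every state admitting a tight extension is quasi-extreme. That is not what the proposition asserts and is not true in general; quasi-extremity is a strictly stronger hypothesis (the paper leaves open whether all states admit tight extensions, and proves separately that quasi-extreme ones do). So the argument you are sketching would, if it worked, prove something false.
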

\begin{proof}
  We maintain the notation used above.  By construction $P^2_0(\mu)$
  contains the vectors $[L_1],\dots [L_n]$ (since the $L_i$ belong to
  $\mathcal S_0$), but then
\begin{equation}
  Q^2_0(\nu)\supset \bigvee_{w\in\mathbb F_+^n} U_wP^2_0(\mu) \supset  \bigvee_{w\in\mathbb F_+^n}\{U_w[L_1],\dots U_w[L_n]\} =\bigvee_{p\in\mathcal A_0} [p] =Q^2_0(\mu).
\end{equation}
In other words, the vectors $[L_i]$ are cyclic for the row isometry
${\bf U}$, but these cyclic vectors are contained in $P^2_0(\mu)$.)
This says that each containment is an equality, which gives
minimality.
\end{proof}
The point of this proposition is that it will show, for the
quasi-extreme states to be defined shortly, the GNS tuple ${\bf U}$
will be completely determined by ${\bf S}$ (as the minimal dilation of
${\bf S}$), and hence uniquely determined by $\mu$ (equivalently,
$b$). We will revisit this remark following the proof of
Theorem~\ref{thm:clark-ball}.

\begin{thm}\label{thm:tights-are-unique}
  If $\mu$ has a tight extenstion, then it is unique (that is, if
  $\nu_1$ and $\nu_2$ are tight extensions of $\mu$, then
  $\nu_1=\nu_2$).
\end{thm}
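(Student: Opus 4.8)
The plan is to show that a tight extension of $\mu$ is forced by $\mu$ alone, through the uniqueness of minimal isometric dilations. Let $\nu_1,\nu_2$ be tight extensions. Since each $\nu_i$ restricts to $\mu$ on $\mathcal S+\mathcal S^*$, we have the canonical isometric inclusions $P^2(\mu)\subset Q^2(\nu_i)$ and $P^2_0(\mu)\subset Q^2_0(\nu_i)$. Tightness says the row isometry ${\bf V}_i={\bf U}_i|_{Q^2_0(\nu_i)}$ is a dilation of the GNS tuple ${\bf S}$ of $\mu$, and by the preceding proposition it is a minimal one, with initial space $P^2_0(\mu)$ in both cases. Hence the Frazho--Bunce--Popescu uniqueness theorem furnishes a unitary $W:Q^2_0(\nu_1)\to Q^2_0(\nu_2)$ which is the identity on $P^2_0(\mu)$ and satisfies $WV_j^{(1)}=V_j^{(2)}W$ for all $j$.

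The next step is to promote $W$ to a unitary of the full GNS spaces that carries the cyclic vector $[I]_1$ to $[I]_2$. Here Theorem~\ref{thm:tight-spatial} is the crucial input: it gives $Q_0[I]=P_0[I]$ in each $Q^2(\nu_i)$, and since $f:=P_0[I]\in P^2_0(\mu)$ depends only on $\mu$, the vectors $e_i:=[I]_i-f$ lie in $Q^2(\nu_i)\ominus Q^2_0(\nu_i)$ and have the common norm $(\mu(I)-\|f\|^2)^{1/2}$. Since $Q^2(\nu_i)$ is the closed span of $[I]_i$ together with the $[L_w]_i$ for $w\neq\varnothing$ (all of which lie in $Q^2_0(\nu_i)$), one gets $Q^2(\nu_i)=\mathbb C e_i\oplus Q^2_0(\nu_i)$, so setting $\widehat W e_1:=e_2$ and $\widehat W|_{Q^2_0(\nu_1)}:=W$ defines a unitary $\widehat W:Q^2(\nu_1)\to Q^2(\nu_2)$ with $\widehat W[I]_1=[I]_2$. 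One then checks that $\widehat W$ still intertwines the GNS tuples $\pi_1(\L)$ and $\pi_2(\L)$: on $Q^2_0(\nu_1)$ this is the intertwining property of $W$, while on $e_1$ one uses that $[L_j]_1,f\in P^2_0(\mu)$ to see $U_j^{(1)}e_1=[L_j]_1-V_j^{(1)}f\in Q^2_0(\nu_1)$, and that $W$ carries this to $[L_j]_2-V_j^{(2)}f=U_j^{(2)}e_2=U_j^{(2)}\widehat W e_1$.

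Once $\widehat W$ intertwines the GNS representations and fixes the cyclic vector, the moments agree: for every word $w$, $\nu_1(L_w)=\langle \pi_1(L_w)[I]_1,[I]_1\rangle=\langle \widehat W\pi_1(L_w)[I]_1,\widehat W[I]_1\rangle=\langle \pi_2(L_w)[I]_2,[I]_2\rangle=\nu_2(L_w)$. Since the $\nu_i$ are bounded and self-adjoint while the linear span of $\{L_w,L_w^*:w\in\Fd\}$ is norm-dense in $\mathcal A+\mathcal A^*$, this forces $\nu_1=\nu_2$.

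The main obstacle I expect is the second step. The dilation unitary $W$ only ``knows'' $\mu$ through ${\bf S}$ and the initial space $P^2_0(\mu)$, so the real content is that the part $[I]_i-Q_0[I]_i$ of the cyclic vector that is invisible to $Q^2_0$ is nevertheless the same vector in both extensions — and this is exactly what the spatial criterion $Q_0[I]=P_0[I]$ of Theorem~\ref{thm:tight-spatial} delivers. The remaining points (that $\widehat W$ is well defined, including the degenerate case $e_i=0$ where $Q^2(\nu_i)=Q^2_0(\nu_i)$; that it is unitary; and that the intertwining relations survive the extension) are routine bookkeeping.
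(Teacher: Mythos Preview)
Your proof is correct, but it takes a different route from the paper's. The paper gives a short direct computation: for a word $w=i_1\cdots i_m$, set $\bar w=i_m i_1\cdots i_{m-1}$ and $i=i_m$, and use $L_i^*L_i=I$ to write
\[
\nu(L_w)=\nu(L_i^*L_{\bar w}L_i)=\langle U_{\bar w}[L_i],[L_i]\rangle=\langle [L_i],S_{\bar w}^*[L_i]\rangle_{P^2_0(\mu)},
\]
the last equality holding because $[L_i]\in P^2_0(\mu)$ and tightness makes $V_j^*|_{P^2_0(\mu)}=S_j^*$. Thus each moment $\nu(L_w)$ is read off from $\mu$ alone, and uniqueness follows in one line. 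A bonus is the explicit formula for the would-be extension, which the paper exploits in the remark following the theorem.

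Your argument is more structural: you invoke uniqueness of the minimal isometric dilation to get an intertwining unitary $W$ on the $Q^2_0$ level, then use the spatial criterion $Q_0[I]=P_0[I]$ to extend $W$ across the one-dimensional complement carrying $[I]$, and finally transport the moments via the cyclic vector. This is a perfectly good proof and makes transparent \emph{why} tightness forces uniqueness (the dilation is pinned down, and the only remaining freedom is the position of $[I]$, which Theorem~\ref{thm:tight-spatial} fixes). The cost is that it is longer and leans on the dilation machinery, whereas the paper's ``rotate the word'' trick avoids all of that and simultaneously produces the candidate formula for $\nu$ needed later.
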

\begin{proof}
  Suppose $\nu$ is a tight extension of $\mu$ and let $w=i_1\cdots
  i_m$ be a word.  Let $\bar w= i_mi_1\dots i_{m-1}$ (remove the last
  letter of $w$ and append it at the beginning).  As shorthand write
  $i=i_m$.  Then
\begin{align}
\nu(L_w)&= \nu(L_i^*L_{\bar w}L_i)\\
&= \langle U_{\bar w}[L_i],[L_i]\rangle_{Q^2_0(\nu)} \\
&= \langle [L_i],U_{\bar w}^*[L_i]\rangle_{Q^2_0(\nu)} \\
&= \langle [L_i],S_{\bar w}^*[L_i]\rangle_{P^2_0(\mu)} 
\end{align}
which shows that $\nu(L_w)$ is completely determined by $\mu$, and hence the extension is unique.  
\end{proof}

\begin{ques} Does every state $\mu$ on $\mathcal S+\mathcal S^*$
have a tight extension to $\mathcal A+\mathcal A^*$?
\end{ques}

It is rather frustrating that this question is still
unanswered. Indeed, the proof of the foregoing theorem tells us what
the extension must be, namely
\begin{equation}
\nu(L_w):= \langle [L_i],S_{\bar w}^*[L_i]\rangle_{P^2_0(\mu)} 
\end{equation}
The difficulty is in showing that this defines a {\em positive} linear
functional.

We can now give a sufficient condition for the existence of a tight extension, in terms of the GNS space.
\begin{defn}
  A state $\mu$ on $\mathcal S+\mathcal S^*$ will be called {\em quasi-extreme} if $P^2_0(\mu)=P^2(\mu)$.
\end{defn} {\bf Remark.} The name ``quasi-extreme'' is chosen by
analogy with the one-variable case. Indeed it is an easy consequence
of the Szeg\H{o} theorem that a function $b$ is an extreme point of
the unit ball of $H^\infty(\mathbb D)$ if and only if for some
(equivalently, all) $\alpha\in \mathbb T$, one has
$P^2(\mu_\alpha)=L^2(\mu_\alpha)$.  By a standard backward-shift
argument, this latter condition is in turn equivalent to the equality
$P^2_0(\mu)=P^2(\mu)$.  So a state on $C(\mathbb T)$ (that is, a
probability measure on $\mathbb T$) is quasi-extreme by the above
definition if and only if it is an AC measure for an extreme point of
the ball of $H^\infty$.  We do not know if there is any relation
between extreme points of the unit ball and quasi-extreme states in
higher dimensions.
\begin{thm}\label{thm:extension}
Every quasi-extreme state on $\mathcal S+\mathcal S^*$ has a unique extension to a state on $\mathcal A+\mathcal A^*$, and this extension is tight.
\end{thm}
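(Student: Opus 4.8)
The goal is to show that when $\mu$ is quasi-extreme (so $P^2_0(\mu) = P^2(\mu)$), then $\mu$ extends uniquely to a state on $\mathcal A + \mathcal A^*$ and that extension is tight. Uniqueness, given tightness, is already Theorem~\ref{thm:tights-are-unique}, so the real content is (a) producing \emph{some} tight extension, and (b) verifying the two constructions coincide — equivalently, by Theorem~\ref{thm:tight-spatial}, that $P_0[I] = Q_0[I]$ for the extension we build. The natural strategy is to build the extension directly from the GNS tuple ${\bf S}$ on $P^2_0(\mu) = P^2(\mu)$ and then recognize it as tight more or less by construction.

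**The plan.** First I would observe that since $\mu$ is quasi-extreme, $[I] \in P^2_0(\mu)$ and the GNS tuple ${\bf S} = (S_1,\dots,S_d)$ of Proposition~\ref{prop:Si-gns} acts on all of $P^2(\mu)$, so the expression $\langle S^{(\nn)}[I],[I]\rangle_\mu$ now makes sense (cf.\ the Remark after Proposition~\ref{prop:Si-gns}). The idea is to \emph{define} a candidate extension via a GNS-style formula: having ${\bf S}$ a row contraction on $P^2(\mu)$, by the Frazho--Bunce--Popescu dilation theorem (or directly via Popescu's universal property cited in Section~\ref{sec:nc-disk}) there is a unital completely positive map $\rho: \mathcal E_d \to B(P^2(\mu))$ with $\rho(L_j) = S_j$, and I set
\begin{equation}
  \nu(a) := \langle \rho(a)[I],[I]\rangle_\mu, \qquad a \in \mathcal A + \mathcal A^*.
\end{equation}
This is manifestly a positive linear functional on $\mathcal A + \mathcal A^*$ (composition of the u.c.p.\ map $\rho$ with a vector state). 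The next — and central — step is to check that $\nu$ restricts to $\mu$ on $\mathcal S + \mathcal S^*$, i.e.\ that $\nu(L^{(\nn)}) = \mu(L^{(\nn)})$ for all $\nn$. For this I would use induction on $|\nn|$, peeling off one letter at a time and invoking the defining identity $S_j^*[p] = P_0[L_j^* p({\bf L})] = [L_j^* p({\bf L})]$ (the projection $P_0$ is the identity here by quasi-extremeness), together with Lemma~\ref{lem:bimodule} to rewrite $L_j^* L^{(\nn)}$ as a scalar multiple of a lower symmetrized monomial. Concretely, writing $S^{(\nn)}$ as a sum of words and using $L_i^* L^{(\mm)} = \frac{|\nn|!}{\nn!}L^{(\mm-\nn)}$ type relations from Lemma~\ref{lem:bimodule}, one reduces $\langle S^{(\nn)}[I],[I]\rangle$ to $\mu$ evaluated on symmetrized monomials, matching the moments of $\mu$. (This is the ``GNS representation of $\mu$ in terms of ${\bf S}$'' foreshadowed in the Remark after Proposition~\ref{prop:Si-gns}.)

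**Verifying tightness and uniqueness.** Once $\nu$ is known to extend $\mu$, tightness follows from Theorem~\ref{thm:tight-spatial}: I must check $P_0[I] = Q_0[I]$. But under quasi-extremeness $P_0$ is the identity on $P^2(\mu)$, so $P_0[I] = [I]$, and I need $Q_0[I] = [I]$ in $Q^2(\nu)$ — that is, $[I]$ lies in the closed span of $\{[L_w] : |w| \geq 1\}$ inside the GNS space of $\nu$. Here is where I expect the main work: one must transfer the equality $P^2_0(\mu) = P^2(\mu)$ across the isometric inclusion $P^2(\mu) \subset Q^2(\nu)$, $P^2_0(\mu) \subset Q^2_0(\nu)$. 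Since $[I] \in P^2_0(\mu)$ by hypothesis and $P^2_0(\mu) \subset Q^2_0(\nu)$, we get $[I] \in Q^2_0(\nu)$ immediately, so $Q_0[I] = [I]$ and tightness holds. (Thus the ``obstacle'' is really just bookkeeping about which space $[I]$ sits in; the genuine subtlety — positivity — was already handled for free by the u.c.p.\ dilation, which is the whole point of starting from the dilation theorem rather than from the ad hoc formula $\nu(L_w) := \langle [L_i], S_{\bar w}^*[L_i]\rangle$ that the proof of Theorem~\ref{thm:tights-are-unique} produces.) Finally, uniqueness of the extension is Theorem~\ref{thm:tights-are-unique} applied to the tight extension $\nu$ just constructed, together with the observation that \emph{any} extension of a quasi-extreme $\mu$ must be tight: if $\nu'$ is any extension, then $P^2_0(\mu) = P^2(\mu) \ni [I]$ forces $[I] \in Q^2_0(\nu')$ hence $Q_0'[I] = [I] = P_0[I]$, so $\nu'$ is tight by Theorem~\ref{thm:tight-spatial}, and then $\nu' = \nu$ by Theorem~\ref{thm:tights-are-unique}.

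**Main obstacle.** The one step that needs care rather than bookkeeping is the moment-matching $\nu|_{\mathcal S + \mathcal S^*} = \mu$: one must be sure that the u.c.p.\ map $\rho$ furnished by the dilation theorem actually sends $L^{(\nn)}$ to the operator $S^{(\nn)}$ built from the $S_j$'s in the way the notation suggests, and that the vector-state moments $\langle S^{(\nn)}[I],[I]\rangle$ collapse, via Lemma~\ref{lem:bimodule}, precisely onto $\mu(L^{(\nn)})$ and not onto some other combination. Since $\rho$ is a homomorphism on the analytic part $\mathcal A$ (Popescu's theorem gives an honest representation when restricted to a row isometry, and $\rho|_{\mathcal A}$ is multiplicative because ${\bf S}$ dilates to a row isometry), $\rho(L^{(\nn)}) = S^{(\nn)}$ is legitimate, and the induction goes through. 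I would present this moment computation as the heart of the proof.
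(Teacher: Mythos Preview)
Your proposal is correct but takes a much longer route than the paper. The paper's entire proof is contained in your final ``uniqueness'' paragraph: since $\mu$ is quasi-extreme, $P_0 = I$ and $[I] \in P^2_0(\mu)$; since $P^2_0(\mu) \subset Q^2_0(\nu)$ for \emph{any} extension $\nu$, one has $Q_0[I] = [I] = P_0[I]$, so every extension is tight by Theorem~\ref{thm:tight-spatial}, and hence unique by Theorem~\ref{thm:tights-are-unique}. Existence of \emph{some} extension is taken for granted (a standard fact about states on operator systems), so nothing needs to be constructed.

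Your explicit construction of $\nu$ via the dilation of ${\bf S}$ and the vector state at $[I]$ is therefore unnecessary, and the step you flag as the ``main obstacle'' --- showing $\langle S^{(\nn)}[I],[I]\rangle = \mu(L^{(\nn)})$ --- is exactly the content of Proposition~\ref{prop:GNS-cyclic}, which the paper proves \emph{after} Theorem~\ref{thm:extension} and \emph{using} it. Proving this identity directly by the induction you sketch is not as clean as you suggest: the formula $S_j^*[p] = [L_j^* p({\bf L})]$ is only available for $p \in \mathcal S_0$, and iterating it runs into trouble the moment $L_j^* p$ acquires a constant term (e.g.\ $p = L_j$), forcing you to pass to limits at each step. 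It can be made to work, but it is real work, not bookkeeping --- and the paper sidesteps it entirely. What your approach buys is an explicit candidate for $\nu$ without appealing to abstract extension theorems; what the paper's approach buys is a two-line proof.
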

\begin{proof}
  The quasi-extremality assumption implies that the projection $P_0$
  is the identity operator, hence $[I]\in P^2_0(\mu)$, but then if
  $\nu$ is {\em any} extension, we have $[I]\in Q^2_0(\nu)$, so
  $P_0[I]=[I]=Q_0[I]$.  Thus by Theorem~\ref{thm:tight-spatial} $\nu$
  is a tight extension, but then Theorem~\ref{thm:tights-are-unique}
  gives that $\nu$ is unique.
\end{proof}

There is an operator-theoretic characterization of quasi-extremity,
using the GNS tuple ${\bf S}$:

\begin{lem}\label{lem:gns-coisom}
  The state $\mu$ is quasi-extreme if and only if its GNS tuple ${\bf
    S}=(S_1, \dots S_d)$ is co-isometric.
\end{lem}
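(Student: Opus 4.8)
The plan is to prove the two implications separately by translating the condition $P^2_0(\mu)=P^2(\mu)$ into a statement about the defect operator $I-\sum_j S_jS_j^*$ on $P^2_0(\mu)$. The key identity to exploit is the one established inside the proof of Proposition~\ref{prop:Si-gns}: for $p\in\mathcal S_0$,
\[
\Big\langle \sum_{i=1}^d S_iS_i^*[p],[p]\Big\rangle_{P^2_0(\mu)}
= \sum_{i=1}^d \|P_0[L_i^*p(\mathbf L)]\|^2
\quad\text{while}\quad
\|[p]\|^2 = \sum_{i=1}^d \|[L_i^*p(\mathbf L)]\|^2,
\]
the second equality because $p\in\mathcal S_0$ forces $p(\mathbf L)=\sum_j L_jL_j^*p(\mathbf L)$, so the vectors $[L_i^*p(\mathbf L)]$ have orthogonal ``components'' summing to $[p]$ in the sense that $\|[p]\|^2=\sum_i \mu(p(\mathbf L)^*L_iL_i^*p(\mathbf L))=\sum_i\|[L_i^*p(\mathbf L)]\|^2$. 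Subtracting, $\langle(I-\sum_i S_iS_i^*)[p],[p]\rangle = \sum_i\big(\|[L_i^*p(\mathbf L)]\|^2-\|P_0[L_i^*p(\mathbf L)]\|^2\big) = \sum_i \|(I-P_0)[L_i^*p(\mathbf L)]\|^2$. Thus the defect of $\mathbf S$ is controlled entirely by how far the vectors $[L_i^*p(\mathbf L)]$ stick out of $P^2_0(\mu)$.

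\textbf{The easy direction} ($\mu$ quasi-extreme $\Rightarrow$ $\mathbf S$ co-isometric): if $P^2_0(\mu)=P^2(\mu)$ then $P_0=I$, so $(I-P_0)[L_i^*p(\mathbf L)]=0$ for every $i$ and every $p\in\mathcal S_0$, whence $\langle(I-\sum_i S_iS_i^*)[p],[p]\rangle=0$ on a dense subset of $P^2_0(\mu)$; since $I-\sum_i S_iS_i^*$ is a positive operator (as $\mathbf S$ is a row contraction by Proposition~\ref{prop:Si-gns}), it vanishes, i.e. $\sum_i S_iS_i^*=I$ and $\mathbf S$ is co-isometric.

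\textbf{For the converse}, suppose $\mathbf S$ is co-isometric but $\mu$ is not quasi-extreme, so $(I-P_0)$ is nonzero, i.e. there is a nonzero vector $\eta\in P^2(\mu)\ominus P^2_0(\mu)$. The displayed identity gives $\sum_i\|(I-P_0)[L_i^*p(\mathbf L)]\|^2=0$ for all $p\in\mathcal S_0$, so every vector of the form $[L_i^*p(\mathbf L)]$ with $p\in\mathcal S_0$ already lies in $P^2_0(\mu)$. I would then push this through to the generators: taking $p(\mathbf L)=L^{(\nn)}$ with $|\nn|\ge 1$ and using Lemma~\ref{lem:bimodule} to compute $L_i^*L^{(\nn)}$, one sees that $L_i^*L^{(\nn)}$ is (a scalar multiple of) a lower-degree symmetrized monomial $L^{(\nn - \mathbf e_i)}$ (or $I$ when $\nn=\mathbf e_i$); summing over $i$ against weights and using $L^{(\nn)}=\sum_j L_jL_j^*L^{(\nn)}$ lets one recover $[L^{(\nn)}]$ itself from vectors one has just shown lie in $P^2_0(\mu)$ — but here the delicate point is the base case $|\nn|=1$, where $L_i^*L_j=\delta_{ij}I$ produces $[I]$, and the claim ``$[I]\in P^2_0(\mu)$'' is exactly quasi-extremity. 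The clean way to say this: the condition $[L_i^*L_j(\mathbf L)]=[\delta_{ij}I]\in P^2_0(\mu)$ forces $[I]\in P^2_0(\mu)$, hence (since $\mathcal S=\mathrm{span}\{I,\mathcal S_0\}$ and $\mathcal S_0$ already generates $P^2_0(\mu)$) $P^2_0(\mu)=P^2(\mu)$, contradicting non-quasi-extremity.

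\textbf{The main obstacle} is the converse, and specifically being careful that the vanishing $\sum_i\|(I-P_0)[L_i^*p(\mathbf L)]\|^2=0$ — which only directly tells us about vectors $[L_i^*p(\mathbf L)]$ — really does reach $[I]$. The clean resolution is to apply the identity with $p=L_i$ (allowed since $L_i\in\mathcal S_0$): then $L_i^*p(\mathbf L)=L_i^*L_i=I$, so $(I-P_0)[I]=0$, i.e. $[I]\in P^2_0(\mu)$, which is quasi-extremity. So in fact the argument is short once one notices that $p=L_i$ is a legal choice; I would present the lemma with exactly that one-line specialization rather than the general degree induction sketched above.
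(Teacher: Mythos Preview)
Your proof is correct and follows essentially the same route as the paper's: both directions rest on the identity
\[
\Big\langle\Big(I-\sum_i S_iS_i^*\Big)[p],[p]\Big\rangle \;=\; \sum_i \big\|(I-P_0)[L_i^*p(\mathbf L)]\big\|^2 \qquad (p\in\mathcal S_0),
\]
and the converse is settled by the specialization $p(\mathbf L)=L_j$, which yields $\|(I-P_0)[I]\|^2=0$. The induction detour you sketch first is unnecessary, as you yourself note; the paper goes straight to $p(\mathbf L)=L_1$.
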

\begin{proof}
First assume $\mu$ is quasi-extreme. It suffices to show that 
  \begin{equation}
    \sum_{j=1}^d \|S_j^*[p]\|^2 =\|[p]\|^2
  \end{equation}
for all polynomials $p\in \mathcal S_0$, since by hypothesis these vectors are dense in $P^2(\mu)$.  For this, first note that we can write 
\begin{equation}
  p({\bf L})=\sum_{j=1}^d L_j p_j({\bf L})
\end{equation}
with $p_j\in \mathcal A_0$.  Then by the orthogonality relations for the $L_i$, 
\begin{equation}
  \sum_{i=1}^d L_iL_i^*p({\bf L})=\sum_{i=1}^d \sum_{j=1}^dL_iL_i^*L_jp_j({\bf L}) = \sum_{j=1}^d L_j p_j({\bf L})= p({\bf L}).
\end{equation}
Thus, 
\begin{equation}
  \sum_{j=1}^d \|S_j^*[p]\|^2 =\sum_{j=1}^d \mu(p({\bf L})^*L_jL_j^*p({\bf L})) =\mu(p({\bf L})^*p({\bf L})) =\|[p]\|^2.
\end{equation}
For the converse, recall the proof of Proposition~\ref{prop:Si-gns},
which established (for any state $\mu$ and any $p\in \mathcal S_0$)
the inequalities
\begin{align}
    \langle \sum_{i=1}^n S_iS_i^* [p],[p]\rangle_{P^2_0(\mu)} &= \sum_{i=1}^n\langle P_0[L_i^*p],P_0[L_i^*p]\rangle_{P^2(\mu)} \\
\label{eqn:coisom-ineq} &\leq \sum_{i=1}^n   \langle [L_i^*p],[L_i^*p]\rangle_{P^2(\mu)} \\
&= \langle [p],[p]\rangle_{P^2_0(\mu)}
\end{align}
If ${\bf S}$ is coisometric, then equality holds in
(\ref{eqn:coisom-ineq}). Specializing to $p({\bf L})=L_1$, we have
$\|P_0[I]\|_\mu^2 =\|[I]\|_\mu^2 =1$, so $P_0[I]=[I]$ and hence $\mu$
is quasi-extreme.
\end{proof}
The fact that ${\bf S}$ is coisometric forces ${\bf U}$ to be a row
unitary (that is, a system of Cuntz isometries):
\begin{prop}\label{prop:GNS-unitary} If $\mu$ is a quasi-extreme state on $\mathcal S+\mathcal
  S^*$ then the GNS tuple ${\bf U}$ belonging to $\nu$ is a row unitary.
\end{prop}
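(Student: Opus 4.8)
The plan is to recognize ${\bf U}$ as the minimal isometric dilation of a \emph{coisometric} row contraction and then invoke (or reprove in two lines, in the case at hand) the standard fact that such a dilation is automatically a row unitary.

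First I would collect what is already available. Since $\mu$ is quasi-extreme we have $[I]\in P^2(\mu)=P^2_0(\mu)$, and since $P^2_0(\mu)\subseteq Q^2_0(\nu)$ for any extension $\nu$ of $\mu$, also $[I]\in Q^2_0(\nu)$; as $Q^2(\nu)$ is the closed span of $[I]$ together with $Q^2_0(\nu)$, this gives $Q^2_0(\nu)=Q^2(\nu)$. Now let $\nu$ be the unique extension of $\mu$ to $\mathcal A+\mathcal A^*$ furnished by Theorem~\ref{thm:extension}; that theorem also tells us $\nu$ is tight, so by the earlier proposition a tight extension makes ${\bf V}={\bf U}|_{Q^2_0(\nu)}$ a minimal isometric dilation of the GNS tuple ${\bf S}$ on $P^2_0(\mu)$. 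But $Q^2_0(\nu)=Q^2(\nu)$, so ${\bf V}={\bf U}$, and hence ${\bf U}$ itself is a minimal isometric dilation of ${\bf S}$. Finally, Lemma~\ref{lem:gns-coisom} tells us that ${\bf S}$ is coisometric: $\sum_{j=1}^d S_jS_j^*=I$ on $P^2_0(\mu)$.

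It then remains to show that the minimal isometric dilation ${\bf U}$ of a coisometric row contraction ${\bf S}$ satisfies $\sum_{j=1}^d U_jU_j^*=I$ on $Q^2(\nu)$. Put $P=\sum_{j=1}^d U_jU_j^*$; since ${\bf U}$ is a row isometry, $P$ is the orthogonal projection onto $\bigvee_{j=1}^d\operatorname{ran}U_j$, so it suffices to check that every generator of $Q^2(\nu)=\bigvee_{w\in\Fd}U_w H$ (with $H=P^2_0(\mu)$) lies in $\operatorname{ran}P$. If $|w|\ge 1$, write $w=iw'$; then $U_w h=U_iU_{w'}h\in\operatorname{ran}U_i\subseteq\operatorname{ran}P$. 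For the case $w=\varnothing$, take $h\in H$; the dilation property $U_j^*|_H=S_j^*$ together with the coisometry relation for ${\bf S}$ gives
\[
\langle Ph,h\rangle=\sum_{j=1}^d\|U_j^*h\|^2=\sum_{j=1}^d\|S_j^*h\|^2=\Big\langle\sum_{j=1}^d S_jS_j^*h,\,h\Big\rangle=\|h\|^2,
\]
and since $P$ is a projection this forces $Ph=h$, so $H\subseteq\operatorname{ran}P$. Hence $\operatorname{ran}P=Q^2(\nu)$, i.e. $P=I$, and ${\bf U}$ is a row unitary.

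I expect no serious obstacle: the only substantive ingredient is the coisometry of ${\bf S}$ from Lemma~\ref{lem:gns-coisom}, transported to $Q^2(\nu)$ through the dilation, and the mild point that quasi-extremality is exactly what forces $Q^2_0(\nu)=Q^2(\nu)$ (so that ${\bf V}$ is all of ${\bf U}$ rather than a proper restriction). Alternatively one could simply cite the fact that a minimal isometric dilation of a coisometric row contraction is a Cuntz dilation, as in the dilation theory of Frazho, Bunce and Popescu already referenced, but the direct verification above is short enough to include.
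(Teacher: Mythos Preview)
Your proof is correct. The paper's own argument shares the first step with yours---observing that quasi-extremity forces $[I]\in Q^2_0(\nu)$ and hence $Q^2_0(\nu)=Q^2(\nu)$---but then diverges slightly in how it establishes the coisometry of ${\bf U}$. Rather than passing through ${\bf S}$ and the dilation relation, the paper simply repeats the computation in the proof of Lemma~\ref{lem:gns-coisom} directly for ${\bf U}$ on $Q^2(\nu)$: for $p\in\mathcal A_0$ one has $\sum_j\|U_j^*[p]\|^2=\sum_j\nu(p({\bf L})^*L_jL_j^*p({\bf L}))=\nu(p({\bf L})^*p({\bf L}))=\|[p]\|^2$, and such $[p]$ are dense since $Q^2_0(\nu)=Q^2(\nu)$. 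Your route---invoking Lemma~\ref{lem:gns-coisom} for ${\bf S}$, then using $U_j^*|_{P^2_0(\mu)}=S_j^*$ to push the identity $\sum_j S_jS_j^*=I$ up to the generators of $Q^2(\nu)$---is a small detour by comparison, but it has the virtue of making explicit the general dilation-theoretic principle (minimal isometric dilations of coisometric row contractions are Cuntz), which the paper alludes to elsewhere. Either way the argument is only a few lines.
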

\begin{proof}
  Since $\nu$ is a tight extension of $\mu$ and $\mu$ is
  quasi-extreme, it follows $Q_0[I]=P_0[I]=[I]$, and thus
  $Q_0^2(\nu)=Q^2(\nu)$. Imitating the proof of
  Lemma~\ref{lem:gns-coisom} we see that the tuple ${\bf U}$ is
  coisometric; since it is already isometric by the GNS construction,
  it is unitary.
\end{proof}

We can now prove that in the quasi-extreme case the state $\mu$ has an honest GNS representation in terms of ${\bf S}$. 
\begin{prop}\label{prop:GNS-cyclic}
  If $\mu$ is a quasi-extreme state on $\mathcal S+\mathcal S^*$, then $\mu$ is a vector state in the GNS representation, that is
  \begin{equation}\label{eqn:gns-quasi}
    \mu(p({\bf L}))=\langle p({\bf S})[I],[I]\rangle
  \end{equation}
  for all polynomials $p\in\mathcal S$. Moreover the GNS tuple ${\bf
    S}$ is cyclic, with cyclic vector $[I]$.
\end{prop}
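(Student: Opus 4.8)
The plan is to leverage the tight extension $\nu$ of $\mu$ guaranteed by Theorem~\ref{thm:extension}, together with the GNS representation for $\nu$ on the full operator system $\mathcal A+\mathcal A^*$, and then transport that representation down to $\mathcal S$ via the identifications $P^2(\mu)\subset Q^2(\nu)$ and $P^2_0(\mu)\subset Q^2_0(\nu)$. Since $\mu$ is quasi-extreme, $P^2_0(\mu)=P^2(\mu)$, and by Theorem~\ref{thm:tight-spatial} (applied as in the proof of Proposition~\ref{prop:GNS-unitary}) we have $Q^2_0(\nu)=Q^2(\nu)$, so in fact all four spaces coincide: $P^2(\mu)=P^2_0(\mu)=Q^2_0(\nu)=Q^2(\nu)$. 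Under this identification the GNS tuple ${\bf U}=\pi({\bf L})$ for $\nu$ restricts to ${\bf V}={\bf U}|_{Q^2_0(\nu)}$, which by tightness is a dilation of ${\bf S}$; but since the spaces coincide, ${\bf V}$ acts on the whole space and the dilation relation $S_j^*=V_j^*$ becomes an equality of operators, i.e. ${\bf S}={\bf U}$.

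With that identification in hand the representation formula is immediate. The standard GNS computation recorded in the excerpt (just before Section~\ref{sec:GNS-main}) gives, for any word $w$ and any state $\nu$ on $\mathcal A+\mathcal A^*$,
\begin{equation*}
  \nu(L_w)=\langle \pi(L_w)[I],[I]\rangle=\langle U_w[I],[I]\rangle.
\end{equation*}
Extending by linearity over symmetrized monomials $L^{(\nn)}$ and then by norm-continuity (using that the series expansion of any $p\in\mathcal S$ converges in norm) yields $\mu(p({\bf L}))=\nu(p({\bf L}))=\langle p({\bf U})[I],[I]\rangle=\langle p({\bf S})[I],[I]\rangle$ for all polynomials $p\in\mathcal S$, which is exactly \eqref{eqn:gns-quasi}. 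One small point to check here is that $p({\bf S})$ makes sense and agrees with the compression of $p({\bf U})$: this is automatic once ${\bf S}={\bf U}$, but if one wants to avoid invoking the identification of the ambient spaces, one can instead argue directly that the moments $\mu(L^{(\nn)})$ are computed by $\langle S^{(\nn)}[I],[I]\rangle$ using that $[I]\in P^2_0(\mu)$ and unwinding definition \eqref{eqn:S0-def}.

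For cyclicity of $[I]$ under ${\bf S}$, observe that the subspace $\bigvee_{w\in\Fd} S_w[I]$ certainly contains $[L^{(\nn)}]$ for every $\nn$ with $|\nn|\geq 1$ — indeed $S_w[I]=[L_w]$ modulo the projection $P_0$, and for $\mu$ quasi-extreme $P_0$ is the identity, so $S_w[I]=[L_w]$ and summing over $\lambda(w)=\nn$ gives $[L^{(\nn)}]$; it also contains $[I]$ itself. Hence this cyclic subspace contains $[p]$ for every $p\in\mathcal S$, and since such vectors are dense in $P^2(\mu)$ by construction, $[I]$ is cyclic for ${\bf S}$. I expect the only genuine subtlety — the ``main obstacle'' — to be the bookkeeping in the first paragraph: one must be careful that the various isometric inclusions $P^2(\mu)\subset Q^2(\nu)$ etc. are compatible with the operator actions, so that ``${\bf V}$ is a dilation of ${\bf S}$ on a space that equals $P^2_0(\mu)$'' really does force ${\bf S}={\bf U}$ rather than merely ${\bf S}$ being a compression; but this has essentially already been done in Proposition~\ref{prop:GNS-unitary} and its surrounding discussion, so it is a matter of citing it correctly rather than new work.
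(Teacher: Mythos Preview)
There is a genuine gap in the first paragraph. You correctly deduce $P^2_0(\mu)=P^2(\mu)$ and $Q^2_0(\nu)=Q^2(\nu)$, but then assert that ``all four spaces coincide.'' That does not follow: you only have the isometric \emph{inclusions} $P^2(\mu)\subset Q^2(\nu)$ and $P^2_0(\mu)\subset Q^2_0(\nu)$, and nothing in the quasi-extreme hypothesis or in Proposition~\ref{prop:GNS-unitary} forces these to be equalities. In fact the paper explicitly treats $P^2(\mu)$ as a proper subspace of $Q^2(\nu)$ in general (see the discussion following Theorem~\ref{thm:clark-ball}, where ${\bf U}$ is described as the minimal Cuntz dilation of the coisometric tuple ${\bf S}$, acting on the strictly larger space). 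So your conclusion ${\bf S}={\bf U}$ is false in general: ${\bf S}$ is only the \emph{compression} of ${\bf U}$ to $P^2_0(\mu)$, and ${\bf S}$ is coisometric but typically not a row isometry. The same error infects the cyclicity argument, since ``$S_w[I]=[L_w]$'' presumes $[L_w]$ lives in $P^2(\mu)$, which it does not for non-symmetric words $w$.

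The paper's proof avoids this by never claiming ${\bf S}={\bf U}$; it uses only the dilation relation $p({\bf S})^*[q]=p({\bf V})^*[q]$ for $[q]\in P^2_0(\mu)$ (which is what tightness actually gives). The key computation is then
\[
\langle [q],\,p({\bf S})[I]\rangle_\mu=\langle p({\bf V})^*[q],[I]\rangle_\nu=\nu(p^*q)=\mu(p^*q)=\langle [q],[p({\bf L})]\rangle_\mu,
\]
valid for all $q\in\mathcal S$, which yields $p({\bf S})[I]=[p({\bf L})]$ directly. Both \eqref{eqn:gns-quasi} (take $q=I$) and cyclicity (the $[p({\bf L})]$ are dense) fall out immediately. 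Your fallback suggestion to ``argue directly\dots unwinding definition~\eqref{eqn:S0-def}'' runs into exactly the difficulty the dilation argument is designed to sidestep: the formula \eqref{eqn:S0-def} describes $S_j^*$, not $S_j$, and iterating it to compute $S^{(\nn)}[I]$ is not straightforward because the intermediate operands need not lie in $\mathcal S_0$.
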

\begin{proof}
  Let $\nu$ be the unique extension of $\mu$ to a state on $\mathcal
  A+\mathcal A^*$ coming from Theorem~\ref{thm:extension}. Since the
  extension is tight, the restricted GNS tuple ${\bf V}={\bf
    U}|_{Q^2_0(\nu)}$ for $\nu$ dilates ${\bf S}$.  Fix a polynomial
  $p\in\mathcal S$.  Then for any polynomial $q\in \mathcal S$, we
  have
  \begin{align}
    \langle [q({\bf L})],p({\bf S})[I]\rangle_\mu &= \langle p({\bf S})^*[q({\bf L})],[I]\rangle_\mu \\ & = \langle p(V)^*[q({\bf L})],[I]\rangle_\nu \\ &= \nu(p^*q) \\ &=\mu(p^*q) \\&= \langle [q({\bf L})], [p({\bf L})]\rangle.
  \end{align}
  Since this holds for all $q$, we conclude that $p({\bf
    S})[I]=[p({\bf L})]$. The identity (\ref{eqn:gns-quasi}) now
  follows by taking $q({\bf L})=[I]$.  Since the $[p({\bf L})]$ are
  dense in $P^2(\mu)=P^2_0(\mu)$ by definition, we also have that
  $[I]$ is a cyclic vector for ${\bf S}$.
\end{proof}
In the one-dimensional case, the theory of the de Branges-Rovnyak
spaces $\mathcal{H}(b)$ often splits into the extreme and non-extreme
cases.  For example, $b$ itself belongs to $\mathcal{H}(b)$ if and
only if $b$ is not extreme \cite[IV-4, V-3]{Sar-book}. It turns out
that the notion of quasi-extreme introduced above is the correct one
in this context.
\begin{defn} A contractive multiplier $b$ of $H^2_d$ will be called
  {\em quasi-extreme} if and only if the state $\mu$ representing $b$
  as in (\ref{eqn:nc-herglotz}) is quasi-extreme.
\end{defn}
\begin{thm}\label{thm:non-extreme-if-in}
  Let $b$ be a contractive multiplier of $H^2_d$. Then $b\in
  \mathcal{H}(b)$ if and only if $b$ is not quasi-extreme.
\end{thm}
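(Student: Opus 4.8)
The plan is to transport the question to $P^2(\mu)$, where $\mu$ is the state representing $b$. Since the NC Fantappi\`e transform $\mathcal V_\mu$ of Theorem~\ref{thm:Vmu} is a unitary of $P^2(\mu)$ \emph{onto} $\mathcal H(b)$, the function $b$ lies in $\mathcal H(b)$ precisely when there is a vector $v\in P^2(\mu)$ with $\mathcal V_\mu v=b$ as holomorphic functions on $\mathbb B^d$. To recognize such a $v$ I would use the identity $\langle v,[G_w]\rangle_\mu=(\mathcal V_\mu v)(w)$ for all $w\in\mathbb B^d$, which follows from $\mathcal V_\mu^{-1}k^b_w=[G_w]$ (proof of Theorem~\ref{thm:Vmu}), the reproducing property of $k^b_w$ in $\mathcal H(b)$, and unitarity of $\mathcal V_\mu$. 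Writing $G_w=(1-b(w)^{*})\Fw$, and noting $[\Fw]-[I]\in P^2_0(\mu)$, the equation $\mathcal V_\mu v=b$ becomes $\langle v,[\Fw]\rangle_\mu=\frac{b(w)}{1-b(w)}$ for every $w$.

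To evaluate the right-hand side I would use Proposition~\ref{prop:NC-fantappie}: setting $z=0$ in (\ref{eqn:NC-db-kernel-2}) gives $\mu(\Fw)=\frac{1-b(0)b(w)^{*}}{(1-b(0))(1-b(w)^{*})}$, so $\langle[I],[\Fw]\rangle_\mu=\overline{\mu(\Fw)}$, and a one-line cancellation yields $\frac{b(w)}{1-b(w)}=\langle[I],[\Fw]\rangle_\mu-\frac{1}{1-b(0)^{*}}$. Hence $\mathcal V_\mu v=b$ is equivalent to $\langle v-[I],[\Fw]\rangle_\mu=-\frac{1}{1-b(0)^{*}}$ for all $w$. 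Evaluating at $w=0$, where $\Fw=I$, forces $\langle v-[I],[I]\rangle_\mu=-\frac{1}{1-b(0)^{*}}$, and subtracting leaves $\langle v-[I],[\Fw]-[I]\rangle_\mu=0$ for all $w$. Since $\{[\Fw]-[I]:w\in\mathbb B^d\}$ spans a dense subspace of $P^2_0(\mu)$ (from Lemma~\ref{lem:NC-kernels-dense}, using that the ``vacuum part'' of $\Fw$ is $I$, so that $\Fw-I$ lands in the closure of $\mathcal S_0$), this condition says exactly $v-[I]\perp P^2_0(\mu)$. Summary: $b\in\mathcal H(b)$ if and only if there is $v\in P^2(\mu)$ with $v-[I]\perp P^2_0(\mu)$ and $\langle v-[I],[I]\rangle_\mu=-\frac1{1-b(0)^{*}}$.

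It remains to solve this system. If $b$ is not quasi-extreme, then $P^2_0(\mu)\subsetneq P^2(\mu)$, hence $[I]\notin P^2_0(\mu)$ and $e:=(I-P_0)[I]$ is a nonzero element of $P^2_0(\mu)^{\perp}$; writing $u=v-[I]$, which must lie in $P^2_0(\mu)^{\perp}$, we have $\langle u,[I]\rangle_\mu=\langle u,e\rangle_\mu$, so $u:=-\bigl((1-b(0)^{*})\|e\|_\mu^{2}\bigr)^{-1}e$ and $v:=[I]+u$ solve the system; thus $b\in\mathcal H(b)$. If $b$ is quasi-extreme, then $P^2_0(\mu)^{\perp}=\{0\}$ forces $v=[I]$, and then $\langle v-[I],[I]\rangle_\mu=0\neq-\frac1{1-b(0)^{*}}$ (using $b\not\equiv 1$, so $b(0)\neq 1$), so no such $v$ exists and $b\notin\mathcal H(b)$; the degenerate case $\mu=0$ (where $b$ is a unimodular constant, $\mathcal H(b)=\{0\}$, and $\mu$ is quasi-extreme) is consistent with this.

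The substantive input is all contained in Theorem~\ref{thm:Vmu} and Proposition~\ref{prop:NC-fantappie}; what most needs care here is the bookkeeping in the reduction — justifying $\langle v,[G_w]\rangle_\mu=(\mathcal V_\mu v)(w)$ and the Taylor-coefficient identifications for \emph{all} $v\in P^2(\mu)$ (this uses Lemma~\ref{lem:bimodule} to give meaning to $(I-\zlstar)^{-1}p({\bf L})$, norm-convergence of the series involved, boundedness of $\mu$, and continuity of $\mathcal V_\mu$), together with the density of $\{[\Fw]-[I]\}$ in $P^2_0(\mu)$.
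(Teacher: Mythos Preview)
Your proof is correct and follows essentially the same approach as the paper: transport the question to $P^2(\mu)$ via the unitary $\mathcal V_\mu$, and show that $b$ has a preimage precisely when there is a nonzero vector orthogonal to $P^2_0(\mu)$. The only cosmetic difference is that the paper tests orthogonality against the monomials $[L^{(\nn)}]$ by reading off Taylor coefficients of $\mathcal K_\mu(y)$, whereas you test against the equivalent spanning set $\{[\Fw]-[I]\}$; the resulting vector $v-[I]$ in your argument is a scalar multiple of the paper's $y=(1-b(0)^{*})([I]-x)$.
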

\begin{proof}
  Recall the normalized NC Fantappi\`{e} transform $\mathcal V_\mu$.
  Assume $b\in \mathcal{H}(b)$ with AC state $\mu$.  Then $\mathcal
  V_\mu(x)=b$ for some $x\in P^2(\mu)$, so $\mathcal
  G_\mu(x)=\frac{b}{1-b}$.  But also $\mathcal
  G_\mu((1-\overline{b(0}))[I])=\frac{1-b(0)^*b}{1-b}$, so it follows
  that
  \begin{equation}\label{eqn:1-in-kmu}
    1= \frac{1-b(0)^*b}{1-b} -(1-b(0)^*) \frac{b}{1-b}=(1-b(0)^*)\mathcal K_\mu([I] -x);
  \end{equation}
  that is, the constant function $1$ lies in the image of $P^2(\mu)$
  under $\mathcal K_\mu$. By expanding $C_z$ in a power series and
  putting $y=(1-\overline{b(0}))([I] -x)\in P^2(\mu)$, it follows from
  (\ref{eqn:1-in-kmu}) and the definition of $\mathcal K_\mu$ that
  \begin{equation}
   1=\mathcal K_\mu(y)(z) =\sum_{\nn\in\mathbb N^d}z^{\nn} \langle y, [L^{(\nn)}]\rangle_{P^2(\mu)}.
  \end{equation}
  In other words, $y$ is orthogonal in $P^2(\mu)$ to each symmetric
  monomial $L^{(\nn)}$ with $|\nn|\geq 1$, so $y$ is a nonzero vector
  orthogonal to $P^2_0(\mu)$, which means $\mu$ is not
  quasi-extreme. Conversely, the steps of this argument reverse to
  show that if $b$ is not quasi-extreme (so that there is some nonzero
  $y\in P^2_0(\mu)^\bot\subset P^2(\mu)$), then $1$ lies in the range
  of $\mathcal K_\mu$ and hence $b\in \mathcal{H}(b)$.
\end{proof}
It is worth noting that while the proof given here works in one
variable, it is quite different from the proof in \cite{Sar-book}.
\begin{cor}
  If $b$ is quasi-extreme then so is $\alpha b$ for every unimodular
  $\alpha\in\mathbb C$.
\end{cor}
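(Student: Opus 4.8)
The plan is to deduce this from Theorem~\ref{thm:non-extreme-if-in}, together with the elementary fact that multiplying $b$ by a unimodular constant changes neither the multiplier property nor the de~Branges--Rovnyak space.

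First I would note that $\alpha b$ is again a contractive multiplier of $H^2_d$: since $M_{\alpha b}=\alpha M_b$ and $|\alpha|=1$, we have $\|M_{\alpha b}f\|_{H^2_d}=\|M_bf\|_{H^2_d}\le\|f\|_{H^2_d}$ for all $f\in H^2_d$. Hence the preceding definitions apply to $\alpha b$ and it is meaningful to ask whether $\alpha b$ is quasi-extreme (recall this refers to the unique representing state of Proposition~\ref{prop:nc-herglotz}). Next I would compute the kernel of $\mathcal H(\alpha b)$: because $|\alpha|=1$,
\[
k^{\alpha b}(z,w)=\frac{1-(\alpha b(z))(\alpha b(w))^*}{1-zw^*}=\frac{1-|\alpha|^2\,b(z)b(w)^*}{1-zw^*}=k^b(z,w),
\]
so $\mathcal H(\alpha b)$ and $\mathcal H(b)$ are literally the same reproducing kernel Hilbert space, with the same norm.

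Finally I would invoke Theorem~\ref{thm:non-extreme-if-in} twice. Applied to $b$ it gives: $b$ is quasi-extreme if and only if $b\notin\mathcal H(b)$. Applied to the contractive multiplier $\alpha b$ it gives: $\alpha b$ is quasi-extreme if and only if $\alpha b\notin\mathcal H(\alpha b)=\mathcal H(b)$. Since $\mathcal H(b)$ is a linear subspace of the holomorphic functions on the ball and $\alpha\ne 0$, membership of $b$ is equivalent to membership of $\alpha b$. Therefore ``$b$ is quasi-extreme'' and ``$\alpha b$ is quasi-extreme'' are equivalent conditions; in particular the former implies the latter, which is the assertion.

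I do not expect any real obstacle here: once Theorem~\ref{thm:non-extreme-if-in} is available this is a two-line deduction, and the only point needing a moment's attention is that that theorem is stated for an arbitrary contractive multiplier, so it may legitimately be applied to $\alpha b$. A more hands-on alternative would track the representing states directly --- the state representing $\alpha b$ is the AC state $\mu_{\alpha^*}$ of $b$, and one could attempt to compare $P^2_0(\mu_{\alpha^*})$ with $P^2_0(\mu)$, or their GNS tuples ${\bf S}$ --- but this is unnecessary given the kernel identity above.
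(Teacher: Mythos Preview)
Your proof is correct and follows exactly the paper's approach: it invokes Theorem~\ref{thm:non-extreme-if-in} together with the identity $\mathcal H(\alpha b)=\mathcal H(b)$ coming from $k^{\alpha b}=k^b$. The paper's proof is the same one-line deduction.
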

\begin{proof}
  This is immediate from Theorem~\ref{thm:non-extreme-if-in}, since
  $\mathcal{H}(b)=\mathcal H(\alpha b)$.
\end{proof}
It also follows that the family of AC states $\{\mu_\alpha\}$
associated to a given $b$ are either all quasi-extreme, or all not, a
fact which was not obvious from the definition.  Unfortunately, at
present we do not know if there is any connection between being
quasi-extreme, and being an extreme point of the set of contractive
multipliers of $H^2_d$ when $d>1$ (as noted above these notions
coincide when $d=1$).
\begin{ques}
  If $b$ is quasi-extreme, then is it an extreme point of the set of
  contractive multipliers of $H^2_d$? or vice-versa?
\end{ques}
It would be very desirable to have some other characterization of the
quasi-extreme multipliers when $d>1$.  A different characterization of
the extreme $b$ in one variable is the following: $b$ is non-extreme
if and only if $1-|b|^2$ is log-integrable, which happens if only if
there is an outer function $a\in H^\infty$ such that $|a|^2+|b|^2=1$
on $\mathbb T$. This is in turn equivalent to saying that there is an
$a$ satisfying the operator identity
\begin{equation}\label{eqn:non-extreme-a}
  M_a^*M_a +M_b^*M_b =I.
\end{equation}
However, this identity can never hold between multipliers of $H^2_d$
when $d>1$, unless $a$ and $b$ are both constant \cite[Theorem
2.3]{GuoEtal}.

\subsection{Examples}\label{eg:quasiextreme} At present we do not know any function-theoretic
characterization of the quasi-extreme $b$ when $d>1$, but it is
possible to give a few examples (and non-examples).  As noted in the
introduction, if $\mu$ is a positive measure on $\partial \mathbb
B^d$, and $b$ is given by the formula
\begin{equation}
  \frac{1+b(z)}{1-b(z)} = \int_{\partial\mathbb B^d} \frac{1+z\zeta^*}{1-z\zeta^*}\, d\mu(\zeta)
\end{equation}
then $b$ is a contractive multiplier of $H^2_d$, though not every such
$b$ is representable in this form.  Every such measure of course gives
rise to a unique state $\widetilde{\mu}$ on $\mathcal S+\mathcal S^*$
representing $b$ as in (\ref{eqn:nc-herglotz}), and by comparing
Taylor coefficients one finds that
\begin{equation}
  \widetilde{\mu}(L^{(\nn)}) = \int_{\partial
\mathbb B^d} \zeta^{\nn}\, d\mu(\zeta).
\end{equation}
In particular if we take $\mu$ to be the point mass at a 
fixed
$\zeta\in\partial \mathbb B^d$, the resulting state on $\mathcal
S+\mathcal S^*$ is called the {\em Cuntz state} $\omega_\zeta$.  The
corresponding $b$ is $b(z)=\langle z,\zeta\rangle$ and it is easy to
see this $b$ is quasi-extreme, since $[I]=\left[\sum_{j=1}^d \zeta_j^*
  L_j\right]$ in $P^2_0(\omega_\zeta)$.  (Indeed $\|[I]-[\sum
\zeta_j^*L_j]\|_{\omega_\zeta}^2 = 2-2\text{Re}\omega_\zeta(\sum
\zeta_j^*L_j) = 0$.) We will see later that all of the $\mathcal H(b)$
spaces are infinite dimensional, which gives another indication that
the classical measure $\mu$ is inadequate for our purposes---in this
example, $L^2(\mu)$ is of course one-dimensional so there can be no
identification of $L^2(\mu)$ with $\mathcal H(b)$.

If in the above construction we take $\mu$ to be a measure supported
on the circle $z_2=\cdots z_d=0$, then the resulting $b$ is a function
of $z_1$ alone, and any $b(z)=b(z_1)$ can equal any function in the
unit ball of $H^\infty(\mathbb D)$.  In this case $b$ will be
quasi-extreme if and only if $b(z_1)$ is an extreme point of the unit
ball of $H^\infty$.

A more sophisticated example, in this case for $d=2$, comes by
considering the state $\mu= \frac12 (\omega_{e_1}+\omega_{e2})$ on
$\mathcal S+\mathcal S^*$. The resulting $b$ is
\begin{equation}
  b(z_1, z_2) = \frac{z_1+z_2 -z_1z_2}{2-z_1-z_2}
\end{equation}
It is now less obvious, but this $b$ is quasi-extreme; this follows from the fact that for the polynomial
\begin{equation}
  p({\bf L}) = \frac{1}{\sqrt{6}} \left(\sum_j L_j -\sum_{j,k} L_jL_k +\sum_{j,k,l}L_jL_kL_l\right)
\end{equation}
one may verify that $\|[I]-[p({\bf L})]\|_\mu^2 =0$.  

In the other direction, if $b_1, \dots b_d$ are functions in
$H^\infty(\mathbb D)$ and each is not an extreme point, then the
product
\begin{equation}
  b(z)=b_1(z_1)\cdots b_d(z_d)
\end{equation}
is not quasi-extreme.
\section{Canonical functional models and the Gleason problem in $\mathcal H(b)$}\label{sec:gleason}

The goal of this section is to establish the uniqueness of the
contractive solution to the Gleason problem in $\mathcal{H}(b)$ when
$b$ is quasi-extreme, and study some of its properties.  In the next
section we will show that this solution admits rank-one coisometric
perturbations. If $f$ is a holomorphic function in $\mathbb B^d$, we
say that a $d$-tuple of holomorphic functions $f_1, \dots f_d$ {\em
  solves the Gleason problem for $f$} if
\begin{equation}
  f(z)-f(0) =\sum_{j=1}^d z_j f_j(z).
\end{equation}
Similarly, a $d$-tuple of linear operators $A_1, \dots A_d$ is said to solve the Gleason problem in a holomorphic space $H$ if 
\begin{equation}
  f(z)-f(0) =\sum_{j=1}^d z_j(A_jf)(z)
\end{equation}
for all $f\in H$. 

Notice that it one variable, it is trivial that the Gleason problem
for $f$ has a unique solution, given by the backward shift $f\to
(f(z)-f(0))/z$. Likewise the backward shift is the only operator
solving the Gleason problem in a holomorphic space $H$, so questions
about it focus on boundedness, etc. In contrast, in the multivariable
setting solutions to the Gleason problem for a given $f$ are never
unique, so the goal is to establish existence (and perhaps uniqueness)
of solutions satisfying some additional conditions, typically
membership in some space of functions.  It was proved by Ball and
Bolotnikov \cite{BB} that contractive solutions to the Gleason problem
in $\mathcal H(b)$ always exist. In this section we study some of
these solutions in more detail. We prove that every such solution can
be split into a sum of two operators; these being a rank-one operator
and the adjoint of a multiplication operator (each is possibly
unbounded). This structure result will be applied to obtain a
Clark-type theorem on rank-one perturbations, and to characterize the
$z$-invariant $\mathcal H(b)$ spaces.

\subsection{Functional models} In this subsection we recall a result
of Ball and Bolotnikov \cite{BB} on solutions to the Gleason problem
in the $\mathcal H(b)$ spaces.  We begin with their definition of a
{\em canonical functional model realization}.
\begin{defn}
  Given a multiplier $b$, say that the block operator matrix 
  \begin{equation}
    {\mathbf U}=\begin{pmatrix}
      A & B \\
      C & D \end{pmatrix} : \mathcal H(b)\oplus \mathbb C \to \mathcal H(b)^d \oplus \mathbb C
  \end{equation}
is a {\em canonical functional model realization} for $b$ if the following conditions are satisfied:
\begin{itemize}
\item[(1)] ${\mathbf U}$ is contractive,
\item[(2)] the $d$-tuple $A:\mathcal H(b)\to \mathcal H(b)^d$ solves the Gleason problem for $\mathcal H(b)$,
\item[(3)] $B:\mathbb C\to \mathcal H(b)^d$ solves the Gleason problem for $b$,
\item[(4)] the operators $C: \mathcal H(b)\to \mathbb C$ and $D:\mathbb C\to \mathbb C$ are given by
  \begin{equation}
    C:f\to f(0), \qquad D:\lambda\to b(0)\lambda
  \end{equation}
respectively, for all $f\in\mathcal H(b)$ and all $\lambda\in\mathbb C$.
\end{itemize}
\end{defn}
To say that ${\mathbf U}$ is a {\em realization} of $b$ means that for all $z\in\mathbb B^n$ 
\begin{equation}
  b(z)= D+ C\left(I-\sum_{j=1}^d z_jA_j\right)^{-1} \left(\sum_{j=1}^d z_jb_j\right).
\end{equation}
where we have written $B$ as a column vector $(b_1, \dots b_d)^T$ with $b_j\in\mathcal H(b)$.  The fact that ${\mathbf U}$ is contractive then entails
\begin{equation}
  B^*B \leq 1-D^*D, 
\end{equation}
or 
\begin{equation}
  \sum_{j=1}^n \|b_j\|^2_{\mathcal H(b)}\leq 1- |b(0)|^2.
\end{equation}
Moreover, since $C$ can be expressed as $C:f\to \langle f,k^b_0\rangle_{\mathcal H(b)}$, we can write $C^*C=k^b_0\otimes k^b_0$, and contractivity also entails
\begin{equation}
  A^*A \leq I_{\mathcal H(b)} -C^*C,
\end{equation}
or
\begin{equation}\label{eqn:contractive-gleason-def}
  \sum_{j=1}^d A_j^*A_j \leq I- k^b_0\otimes k^b_0.
\end{equation}
Following \cite{BB}, when (\ref{eqn:contractive-gleason-def} holds we
say $A$ is a {\em contractive solution to the Gleason problem in
  $\mathcal H(b)$}. One of the main results of \cite{BB} is
\begin{thm}\label{thm:bb-cfm}
  For each contractive multiplier $b$, there exists a canonical
  functional model realization ${\mathbf U}$. In particular, for each
  $b$ there exists a contractive solution to the Gleason problem in
  $\mathcal H(b)$, and there exist functions $b_1, \dots
  b_d\in\mathcal H(b)$ satisfying
\begin{enumerate}
\item[(i)] $\displaystyle{b(z)-b(0)=\sum_{j=1}^d z_jb_j(z)}$, and
\item[(ii)] $\displaystyle{\sum_{j=1}^d \|b_j\|^2_{\mathcal{H}(b)} \leq 1-|b(0)|^2}$
\end{enumerate}
\end{thm}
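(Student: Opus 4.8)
The plan is to prove this by a lurking-isometry (transfer-function realization) argument, following Ball--Bolotnikov \cite{BB}. Everything hinges on one algebraic identity: clearing the denominator in $k^b$, the hypothesis that $k^b$ is positive semidefinite gives, for all $z,w\in\mathbb B^d$,
\begin{equation}\label{eqn:plan-kernel-id}
  k^b(z,w) + b(z)b(w)^* = 1 + \sum_{j=1}^d z_jw_j^*\, k^b(z,w).
\end{equation}
The key observation is that both sides of (\ref{eqn:plan-kernel-id}) are Gram matrices: the left side equals $\langle\, k^b_w\oplus b(w)^*,\ k^b_z\oplus b(z)^*\,\rangle_{\mathcal H(b)\oplus\mathbb C}$, and the right side equals $\langle\, (w_1^*k^b_w,\dots,w_d^*k^b_w)\oplus 1,\ (z_1^*k^b_z,\dots,z_d^*k^b_z)\oplus 1\,\rangle_{\mathcal H(b)^d\oplus\mathbb C}$; this uses the reproducing property of $k^b$ and the rule $\langle cf,dg\rangle=c\bar d\langle f,g\rangle$, which is where the conjugates on the $z_j,w_j$ have to be tracked.

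First I would let $\mathcal D_1\subseteq\mathcal H(b)\oplus\mathbb C$ be the closed linear span of the vectors $k^b_w\oplus b(w)^*$ and $\mathcal D_2\subseteq\mathcal H(b)^d\oplus\mathbb C$ the closed linear span of the vectors $(w_1^*k^b_w,\dots,w_d^*k^b_w)\oplus 1$ ($w\in\mathbb B^d$). By (\ref{eqn:plan-kernel-id}) these two families have identical Gram matrices, so
\begin{equation*}
  V\big(k^b_w\oplus b(w)^*\big) := (w_1^*k^b_w,\dots,w_d^*k^b_w)\oplus 1
\end{equation*}
is well defined on finite combinations and extends to an isometry of $\mathcal D_1$ onto $\mathcal D_2$. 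I would then extend $V$ to a contraction ${\mathbf U}:\mathcal H(b)\oplus\mathbb C\to\mathcal H(b)^d\oplus\mathbb C$ by setting it to $0$ on $\mathcal D_1^\perp$, and write it in block form ${\mathbf U}=\left(\begin{smallmatrix}A&B\\ C&D\end{smallmatrix}\right)$. Contractivity, condition (1), is then automatic.

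The remaining work is to read off conditions (2)--(4). Since $V$ is an isometry of $\mathcal D_1$ onto $\mathcal D_2$ and ${\mathbf U}$ annihilates $\mathcal D_1^\perp$, one has ${\mathbf U}^*\big((w_1^*k^b_w,\dots,w_d^*k^b_w)\oplus 1\big)=k^b_w\oplus b(w)^*$ for every $w$. Pairing ${\mathbf U}(f\oplus 0)=(Af)\oplus(Cf)$ against this output vector and using $\langle A_jf,\,w_j^*k^b_w\rangle=w_j(A_jf)(w)$ yields $\sum_{j}w_j(A_jf)(w)+Cf=f(w)$ for all $f\in\mathcal H(b)$; taking $w=0$ gives $Cf=f(0)$, and what remains is exactly the Gleason relation for $\mathcal H(b)$, i.e.\ condition (2) and the $C$ half of (4). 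Likewise, pairing ${\mathbf U}(0\oplus 1)=(B1)\oplus(D1)$ against the same vector gives $\sum_j w_j b_j(w)+D1=b(w)$ where $b_j:=B_j1\in\mathcal H(b)$; taking $w=0$ gives $D1=b(0)$ and the rest is the Gleason relation for $b$, i.e.\ condition (3) and the $D$ half of (4). The realization identity $b(z)=D+C(I-\sum_j z_jA_j)^{-1}(\sum_j z_jb_j)$ then follows from these Gleason relations together with $\|A\|\le\|{\mathbf U}\|\le 1$ by the standard state-space geometric-series computation (iterate $h\mapsto\sum_j z_jA_jh$ starting from $h_0=\sum_j z_jb_j$; the iterates $\to 0$ in $\mathcal H(b)$ since $\|\sum_j z_jA_j\|\le|z|<1$). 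Finally, restricting ${\mathbf U}$ to its second block column $\mathbb C\to\mathcal H(b)^d\oplus\mathbb C$ gives $\sum_j\|b_j\|^2+|b(0)|^2\le 1$, which is (i) and (ii), and restricting it to its first block column reproduces $\sum_j A_j^*A_j\le I-k^b_0\otimes k^b_0$ (with $C^*C=k^b_0\otimes k^b_0$).

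I do not expect a serious obstacle here: the argument is essentially ``unwind (\ref{eqn:plan-kernel-id}).'' The two points that need care are (a) the complex-conjugate bookkeeping in (\ref{eqn:plan-kernel-id}) and in the pairings, so that the scalar factors land as $w_j$ rather than $w_j^*$ in the Gleason relations; and (b) confirming that the \emph{canonical} normalizations in (4) come out automatically rather than having to be imposed --- the reason is that $V$ sends $k^b_0\oplus b(0)^*$ to $0\oplus 1$, which pins down ${\mathbf U}^*(0\oplus 1)=k^b_0\oplus b(0)^*$ and hence $C^*$ and $D^*$. I would also flag explicitly that uniqueness of the realization (equivalently, of the contractive Gleason solution) is \emph{not} part of this statement and indeed fails in general when $d>1$; recovering uniqueness is precisely the role of the quasi-extreme hypothesis, and is taken up in the remainder of the section.
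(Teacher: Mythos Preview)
The paper does not give its own proof of this theorem; it is quoted as ``one of the main results of \cite{BB}'' and invoked without argument. Your proposal is a correct, self-contained reconstruction of the lurking-isometry argument that Ball--Bolotnikov use: the kernel identity (\ref{eqn:plan-kernel-id}) is exactly the Gram-matrix equality that produces the partial isometry $V$, and your extraction of conditions (2)--(4) via ${\mathbf U}^*\big((w_j^*k^b_w)_j\oplus 1\big)=k^b_w\oplus b(w)^*$ is clean and accurate, including the conjugate bookkeeping. So there is nothing to compare against in the present paper, but your argument matches the cited source and is sound.
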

The functions $b_j$ play the same role in the present development as
$S^*b$ does in the one-variable case; in particular (ii) is the
``inequality for difference quotients'' in this setting.  

We record one more result from \cite{BB}, namely that the reproducing
kernel for $\mathcal H(b)$ can be expressed in terms of a functional
model. In particular we have for all $f\in\mathcal H(b)$
\begin{equation}
  f(z)= C(I-z{\bf A})^{-1}f
\end{equation}
or more explicitly
\begin{equation}
  f(z)=\langle (I-z{\bf A})^{-1} f, k^b_0\rangle
\end{equation}
and thus
\begin{equation}\label{eqn:func-model-kernel}
  k^b_z = (I-{\bf A}^*z^*)^{-1}k^b_0.
\end{equation}
\subsection{Unique contractive solutions in the quasi-extreme case}
\begin{defn} A contractive solution ${\bf X}=(X_1,\dots X_d)$ to the Gleason problem in $\mathcal H(b)$ will be called {\em extremal} if
  \begin{equation}\label{eqn:extremal-gleason-def}
    \sum_{j=1}^d X_j^* X_j =I-k^b_0\otimes k^b_0.
  \end{equation}
\end{defn}
(That is, equality holds in (\ref{eqn:contractive-gleason-def}).)  The
next theorem characterizes contractive and extremal solutions by their
action on reproducing kernels. To avoid trivialities we assume $b$ is
non-constant.
\begin{thm}\label{thm:canon-on-kernels}
  A $d$-tuple ${\mathbf X}=(X_1, \dots X_d)$ is a contractive solution
  to the Gleason problem in $\mathcal{H}(b)$ if and only if it acts on
  the reproducing kernels $k^b_w$ by the formula
  \begin{equation}\label{eqn:canon-on-kernels}
    (X_jk^b_w)(z) =w_j^* k^b_w(z)-b_j(z)b(w)^*.
  \end{equation}
  where $b_1,\dots b_d$ are functions in $\mathcal{H}(b)$ satisfying
\begin{enumerate}
\item[(i)] $\displaystyle{b(z)-b(0)=\sum_{j=1}^d z_jb_j(z)}$, and
\item[(ii)] $\displaystyle{\sum_{j=1}^d \|b_j\|^2_{\mathcal{H}(b)} \leq 1-|b(0)|^2}$
\end{enumerate}
The solution is extremal if and only if equality holds in (ii).
\end{thm}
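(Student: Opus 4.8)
The plan is to verify the ``if'' and ``only if'' directions separately, using the functional-model description of $\mathcal H(b)$ from \eqref{eqn:func-model-kernel}, and to transfer the contractivity condition \eqref{eqn:contractive-gleason-def} into the equality case by a standard inner-product computation on the dense set of reproducing kernels. The key observation is that a $d$-tuple solving the Gleason problem in $\mathcal H(b)$ is already contractive in the weaker sense $\sum_j \|X_j f\|^2 \le \|f\|^2$, so its adjoint tuple $(X_1^*,\dots,X_d^*)\colon \mathcal H(b)\to \mathcal H(b)^d$ is determined by what it does to the total set $\{k^b_w\}$, and conversely \eqref{eqn:canon-on-kernels} gives a candidate action of each $X_j$ on $\{k^b_w\}$ which we must check is (a) consistent with solving the Gleason problem, and (b) bounded precisely when (ii) holds.

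First I would assume ${\mathbf X}$ is a contractive solution and derive \eqref{eqn:canon-on-kernels}. Writing $f_j = X_j f$ and using $f(z)-f(0)=\sum_j z_j f_j(z)$ together with the reproducing property, I compute $\langle X_j k^b_w, g\rangle_{\mathcal H(b)}$ for arbitrary $g\in\mathcal H(b)$ by expressing it as $\langle k^b_w, X_j^* g\rangle$ and unwinding; the point is that $\langle X_j^* g, k^b_w\rangle$ must combine, upon summing against $w_j$, to give $g(w)-g(0)$, while the ``defect'' from $k^b_0$ in \eqref{eqn:contractive-gleason-def} is exactly accounted for by the rank-one term $b_j\otimes (\text{something involving } k^b_0)$. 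Concretely, I expect the cleanest route is to observe that since $b(z)-b(0)=\sum_j z_j b_j(z)$ the functions $b_j$ are \emph{forced} to be the images under ${\mathbf X}$ applied appropriately, and to pin down the formula by testing \eqref{eqn:canon-on-kernels} against $k^b_z$ and checking it reproduces the kernel identity
\begin{equation}
  \langle X_j k^b_w, k^b_z\rangle = w_j^* k^b_w(z) - b_j(z) b(w)^*,
\end{equation}
which says precisely that $(X_j k^b_w)(z)$ has the stated form. The Gleason identity for the kernels, $k^b_w(z)-k^b_w(0) = \sum_j z_j (X_j k^b_w)(z)$, then follows by substituting \eqref{eqn:canon-on-kernels} and using $\sum_j z_j w_j^* = zw^*$ together with $\sum_j z_j b_j(z) = b(z)-b(0)$ and the explicit form $k^b_w(z) = \frac{1-b(z)b(w)^*}{1-zw^*}$; this is a direct algebraic check.

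For the converse, I would \emph{define} operators $X_j$ on the span of the kernels by \eqref{eqn:canon-on-kernels} (with $b_j$ any functions satisfying (i)), check via the same algebraic identity that they solve the Gleason problem on that span, and then compute $\sum_j \|\sum_w c_w w_j^* k^b_w - \sum_w c_w b_j\, b(w)^*\|^2$ for a finite linear combination $f = \sum_w c_w k^b_w$; expanding the square and using \eqref{eqn:NC-db-kernel-2}-style kernel identities, this should reduce to $\|f\|^2 - |f(0)|^2 + (\text{a term that is}\le 0 \text{ iff } \sum_j\|b_j\|^2\le 1-|b(0)|^2)$, so that ${\mathbf X}$ extends to a bounded contractive solution exactly under (ii), with equality in (ii) matching equality in \eqref{eqn:extremal-gleason-def} — recognizing the latter as the extremal condition because $\sum_j X_j^* X_j = I - k^b_0\otimes k^b_0$ reads, tested on kernels, as the ``equality'' version of the same inequality. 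The main obstacle I anticipate is bookkeeping in this last expansion: keeping track of the cross terms between the ``shift part'' $w_j^* k^b_w$ and the ``rank-one part'' $b_j b(w)^*$ and identifying the residual term as a positive multiple of $(1-|b(0)|^2) - \sum_j\|b_j\|^2$; I expect this to come out cleanly precisely because of the factorization of $k^b$ and the Gleason relation (i), but it requires care to get the constants right and to confirm that no boundedness is lost when passing from the span of kernels to all of $\mathcal H(b)$ (which is automatic once the norm inequality is established on a dense set).
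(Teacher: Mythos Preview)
Your ``if'' direction matches the paper's argument: define the $X_j$ on kernels by \eqref{eqn:canon-on-kernels}, verify the Gleason identity via the algebra $\sum_j z_j w_j^* = zw^*$ and (i), and then expand $\sum_j \langle X_j k^b_w, X_j k^b_z\rangle$ to get the inequality $\sum_j X_j^*X_j \le I - k^b_0\otimes k^b_0$ exactly when (ii) holds, with equality in one iff equality in the other. That part is fine.

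The ``only if'' direction has a genuine gap. You assume a contractive Gleason solution $\mathbf X$ and want to produce the $b_j$, but your sketch never says where they come from: the phrases ``the $b_j$ are forced to be the images under $\mathbf X$ applied appropriately'' and ``testing \eqref{eqn:canon-on-kernels} against $k^b_z$'' presuppose the $b_j$ already exist. The Gleason identity applied to $k^b_w$ only tells you that the functions $f_{j,w}(z):= w_j^* k^b_w(z) - (X_j k^b_w)(z)$ satisfy $\sum_j z_j f_{j,w}(z) = (b(z)-b(0))b(w)^*$; it does \emph{not} force $f_{j,w}$ to factor as $b_j(z) b(w)^*$ for fixed functions $b_j$ independent of $w$. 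That factorization is precisely what must be proved, and it requires the contractivity hypothesis in an essential way. The paper's route is: run your norm expansion backwards to obtain the kernel domination
\[
\sum_{j=1}^d \langle f_{j,w}, f_{j,z}\rangle_{\mathcal H(b)} \ \le\ (1-|b(0)|^2)\, b(z)\, b(w)^*,
\]
and then invoke Douglas's factorization lemma to produce a contraction $\mathbb C \to \mathcal H(b)^d$ sending $b(w)^*$ to $(1-|b(0)|^2)^{-1/2}(f_{1,w},\dots,f_{d,w})$; its image of the scalar $1$ gives the $b_j$ (after rescaling), and then (i), (ii), and the extremal case follow. Without this step (or an equivalent one) your argument does not establish that $X_j k^b_w$ has the form \eqref{eqn:canon-on-kernels} for some $b_j$ satisfying (i) and (ii). Also, your aside that ``a $d$-tuple solving the Gleason problem is already contractive in the weaker sense $\sum_j\|X_jf\|^2\le\|f\|^2$'' is not part of the hypotheses and is not something you can use here; contractivity is an assumption, not a consequence.
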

\begin{proof}
  First, suppose $b_j$ exist satisfying (i) and (ii), and the $X_j$
  are defined by (\ref{eqn:canon-on-kernels}).  Then for all $z,
  w\in\mathbb B^d$,
  \begin{align}
    \sum_{j=1}^d z_j(X_jk^b_w)(z) &=\sum_{j=1}^d z_j\left(w_j^*  k^b_w(z)-z_jb_j(z)b(w)^*\right) \\
&= {zw^*} \frac{1-b(z)b(w)^*}{1-{zw^*}} - (b(z)-b(0))b(w)^* \\
&= {zw^*}\frac{1-b(z)b(w)^*}{1-{zw^*}} +(1-b(z)b(w)^*) -(1-b(0)b(w)^*)  \\
&= k^b_w(z)-k^b_w(0) 
  \end{align}
  where we have used property (i) of the $b_j$.  Thus the $X_j$ solve
  the Gleason problem on the linear span of the $k^b_w$. Once we show
  that $\sum X_j^*X_j\leq I-k^b_0\otimes k^b_0$ on this span, it follows
  that the $X_j$ are bounded and have unique bounded extensions to
  $\mathcal{H}(b)$; a routine approximation argument shows that these
  extensions solve the Gleason problem for all of $\mathcal{H}(b)$.
  So, compute:
\begin{align}
  \langle X_jk^b_w, X_jk^b_z\rangle &= \langle w_j^*  k^b_w-b_jb(w)^*, z_j^* k^b_z-b_jb(z)^*\rangle \\
&= z_jw_j^*  k(z,w) -z_jb_j(z)b(w)^* -w_j^* b_j(w)^* b(z) + \|b_j\|^2b(z)b(w)^*.
\end{align}
Summing over $j$, and using properties (i) and (ii) of the $b_j$, we have
\begin{align}
\label{eqn:firstline}  \langle \sum_{j=1}^d X_j^*X_j k^b_w,k^b_z\rangle &\leq {zw^*} k(z,w) -(b(z)-b(0))b(w)^* -b(z)(\overline{b(w)-b(0)}) +(1-|b(0)|^2)b(z)b(w)^*\\
&=  {zw^*} k(z,w) +(1-b(z)b(w)^*) -(1-b(z)b(0)^*)(1-b(0)b(w)^*) \\
&= \langle k^b_w, k^b_z\rangle -\langle k^b_w,k^b_0\rangle \langle k^b_0,k^b_z\rangle \\
&= \langle (I-k^b_0\otimes k^b_0)k^b_w,k^b_z\rangle
\end{align}
This shows $\sum X_j^*X_j \leq I-k^b_0\otimes k^b_0$, with equality if and only if equality holds in (\ref{eqn:firstline}), if and only if equality holds in (ii).

Conversely, suppose $X_j$ are operators on $\mathcal{H}(b)$ which solve the Gleason problem and satisfy $\sum X_j^*X_j\leq I-k^b_0\otimes k^b_0$.  For each $j=1, \dots d$ and each $w\in \mathbb B^d$, define a function $f_{j,w}\in \mathcal{H}(b)$ by the formula
\begin{equation}\label{eqn:form-of-T}
  f_{j,w}(z) =w_j^* k^b_w(z) - (X_jk^b_w)(z). 
\end{equation}
We must show $f_{j,w}=b_j b(w)^*$ for some $b_j$ satisfying (i) and (ii).  A computation similar to the verification in the first part of the proof shows that, since the $X_j$ are assumed to solve the Gleason problem, we must have
\begin{equation}\label{eqn:fj-temp}
  \sum_{j=1}^d z_j f_{j,w}(z) =(b(z)-b(0))b(w)^*.
\end{equation}
Using this identity, and again imitating the algebra in the first part
of the proof, the hypothesis $\sum X_j^*X_j\leq I-k^b_0\otimes k^b_0$
entails the kernel inequalities
\begin{align}
  k(z,w)-k^b_0(z)k^b_0(w)^* &\geq \sum_{j=1}^d \langle X_jk^b_w, X_jk^b_z\rangle \\
&= {zw^*} k(z,w) -2b(z)b(w)^* +\sum_{j=1}^d \langle f_{j,w},f_{j,z}\rangle.
\end{align}
This simplifies to 
\begin{equation}\label{eqn:f-condition}
  \sum_{j=1}^d \langle f_{j,w},f_{j,z}\rangle \leq (1-|b(0)|^2)b(z)b(w)^*.
\end{equation}
This inequality implies, via Douglas's factorization lemma, that there
is a contractive linear map from $\mathbb C$ to the direct sum of $d$
copies of $\mathcal{H}(b)$ taking $b(w)^*$ to the column vector whose
$j^{th}$ entry is $(1-|b(0)|^2)^{-1/2}f_{j,w}$. Such a map must send
the scalar $1$ to a vector in the unit ball of $\mathcal{H}(b)^d$.  If
we write $a_j$ for the $j^{th}$ entry of this vector, then $\sum
\|a_j\|^2\leq 1$, and we have
\begin{equation}
 (1-|b(0|^2)^{-1/2} f_{j,w}=a_jb(w)^*.
\end{equation}
Rescale: put $b_j=(1-|b(0)|^2)^{1/2}a_j$; then $\sum\|b_j\|^2 \leq 1-|b(0)|^2$ and
\begin{equation}
  f_{j,w}=b_jb(w)^*.
\end{equation}
But now from (\ref{eqn:fj-temp}) we have for all $z,w\in\mathbb B^d$ 
\begin{equation}
  \sum_{j=1}^d z_jb_j(z)b(w)^* =(b(z)-b(0))b(w)^*.
\end{equation}
Since $b$ is not identically $0$, we conclude $\sum z_jb_j(z)=b(z)-b(0)$, so the $b_j$ satisfy (i) and (ii), and from (\ref{eqn:form-of-T}), the $X_j$ have the claimed form.  In the case of equality $\sum_{j=1}^d X_j^*X_j =I-k^b_0\otimes k^b_0$,, we have also equality in (\ref{eqn:f-condition}), and it is a straightforward matter to verify that this propagates through the calculation to give equality in (ii) (in this case the contractive linear map which produces the $b_j$ is isometric).
\end{proof}
The above theorem also lets us obtain a formula for the action of the
$X_j^*$ on arbitrary elements of $\mathcal H(b)$.  Indeed using Theorem~\ref{thm:canon-on-kernels} and the relation
  \begin{equation*}
    X_j^*f(z) = \langle  X^*f, k^b_z\rangle =\langle f, X_jk^b_z\rangle
  \end{equation*}
we have for all $f\in \mathcal H(b)$
\begin{equation}\label{eqn:X*-formula}
  X_j^*f(z)= z_jf(z) -\langle f, b_j\rangle_{\mathcal H(b)} b(z).
\end{equation}
This makes the next corollary almost immediate.
\begin{cor}\label{cor:z-invariant} Let $b$ be a contractive multiplier of $\mathcal H^2_d$. Then the following are equivalent:
  \begin{itemize}
  \item[i)] $z_j \mathcal H(b)\subset \mathcal H(b)$ for all $j=1, \dots d$
  \item[ii)] $b\in \mathcal H(b)$
  \item[iii)] $b$ is not quasi-extreme.
  \end{itemize}
\end{cor}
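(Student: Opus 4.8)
The plan is to reduce everything to the adjoint formula (\ref{eqn:X*-formula}) together with Theorem~\ref{thm:non-extreme-if-in}. Since (ii) $\Leftrightarrow$ (iii) is literally the content of Theorem~\ref{thm:non-extreme-if-in}, the only work is the equivalence of (i) with (ii), and here I would argue as follows. As in the rest of this section I assume $b$ is non-constant (for a unimodular constant $\mathcal H(b)=\{0\}$ and the statement degenerates, while for a non-unimodular constant $\mathcal H(b)=H^2_d$ as a set and all three conditions hold trivially). Fix a contractive solution $\mathbf X=(X_1,\dots,X_d)$ to the Gleason problem in $\mathcal H(b)$ — one exists by Theorem~\ref{thm:bb-cfm} — and let $b_1,\dots,b_d\in\mathcal H(b)$ be the associated difference-quotient functions from Theorem~\ref{thm:canon-on-kernels}, so that $b(z)-b(0)=\sum_j z_jb_j(z)$. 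By (\ref{eqn:X*-formula}), for every $f\in\mathcal H(b)$ and every $j$,
\[
  z_jf(z)=X_j^*f(z)+\langle f,b_j\rangle_{\mathcal H(b)}\,b(z).
\]
Because $X_j^*$ is a bounded operator on $\mathcal H(b)$, the term $X_j^*f$ always lies in $\mathcal H(b)$; thus $z_jf\in\mathcal H(b)$ if and only if $\langle f,b_j\rangle_{\mathcal H(b)}\,b\in\mathcal H(b)$. This single observation drives both implications.

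For (ii) $\Rightarrow$ (i): if $b\in\mathcal H(b)$, then the right-hand side of the displayed identity is a sum of two members of $\mathcal H(b)$, so $z_jf\in\mathcal H(b)$ for every $f$ and every $j$; hence $z_j\mathcal H(b)\subset\mathcal H(b)$.

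For (i) $\Rightarrow$ (ii): assume $z_j\mathcal H(b)\subset\mathcal H(b)$ for each $j$. Then for every $f\in\mathcal H(b)$ the vector $z_jf-X_j^*f=\langle f,b_j\rangle_{\mathcal H(b)}\,b$ lies in $\mathcal H(b)$. Since $b$ is non-constant and $b(z)-b(0)=\sum_j z_jb_j(z)$, at least one $b_j$ is not the zero function; for that $j$ take $f=b_j$, so that $\langle b_j,b_j\rangle_{\mathcal H(b)}=\|b_j\|_{\mathcal H(b)}^2\neq 0$, and divide this scalar out to conclude $b\in\mathcal H(b)$. Combined with Theorem~\ref{thm:non-extreme-if-in}, the three conditions are then all equivalent.

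The argument is essentially immediate once formula (\ref{eqn:X*-formula}) is in hand; the only place requiring any care is the last step, where one must rule out the possibility that $\langle f,b_j\rangle_{\mathcal H(b)}=0$ for all $f$ and all $j$ — equivalently that every $b_j=0$ — and this is exactly where the standing non-constancy hypothesis on $b$ is used. I do not expect a genuine obstacle here, which matches the remark in the text that this corollary is almost immediate.
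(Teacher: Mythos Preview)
Your proof is correct and follows essentially the same route as the paper's: both arguments reduce (i)$\Leftrightarrow$(ii) to the adjoint formula (\ref{eqn:X*-formula}) for a contractive Gleason solution, obtain (ii)$\Rightarrow$(i) by reading off that both summands lie in $\mathcal H(b)$, and obtain (i)$\Rightarrow$(ii) by picking $j$ with $b_j\neq 0$ and specializing to $f=b_j$. The only difference is cosmetic organization.
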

\begin{proof}
  The equivalence of (ii) and (iii) is
  Theorem~\ref{thm:non-extreme-if-in}.  For the equivalence of (i) and
  (ii), first recall that by Theorem~\ref{thm:bb-cfm}, for any $b$
  there exists a contractive solution to the Gleason problem $(X_1,
  \dots X_d)$.  From the formula (\ref{eqn:X*-formula} we see that if
  $b\in \mathcal H(b)$, then $z_jf\in \mathcal H(b)$ for all $f\in
  \mathcal H(b)$ and all $j$. Conversely, if $\mathcal H(b)$ is
  $z_j$-invariant for each $j$, since $b$ is non-constant we can
  choose $j$ such that $b_j\neq 0$.  Then specializing to $f=b_j$ we
  have
\begin{equation}
  \|b_j\|^2 b(z)=z_jb_j(z) -(X_j^*b_j)(z).
\end{equation}
and the right side lies in $\mathcal H(b)$ by hypothesis. 
\end{proof}
{\bf Remark:} Note that it is possible that $\mathcal H(b)$ is $z_j$-invariant for some $j$ but not others. A simple example in two variables is $b(z_1, z_2)=z_1$: then we can take $b_1(z)=1, b_2(z)=0$.  This $b$ is associated to the Cuntz state $\omega_{e_1}$ and is quasi-extreme (Example~\ref{eg:quasiextreme}). However from (\ref{eqn:X*-formula}) we see that $\mathcal H(b)$ is invariant for $z_2$ but not $z_1$. 

We also observe that once $b\in \mathcal H(b)$, then $\mathcal H(b)$
also contains the constant functions, and therefore, by
$z$-invariance, all polynomials. In one variable the polynomials are
dense in $\mathcal H(b)$ when $b$ is non-extreme, but so far we have
been unable to prove this when $d>1$ (though it seems very likely to
be true).

\begin{ques} If $b$ is non-extreme, are the polynomials dense in
  $\mathcal H(b)$?
\end{ques}

We are now in a position to prove the uniqueness of the contractive
solution (which will in fact be extremal) to the Gleason problem when
$b$ is quasi-extreme. By Theorem~\ref{thm:canon-on-kernels}, it
suffices to produce the functions $b_j$. The next lemma shows how to
do this, starting from the AC state for $b$. We make use of the
normalized NC Fantappi\`{e} transform $\mathcal V_\mu$ and the GNS
tuple ${\bf S}$ associated to the state $\mu$.

\begin{lem}\label{lem:gleason-for-phi}
  Let $b$ be a quasi-extremal multiplier, with AC state $\mu$.  Then the functions
  \begin{equation}
    b_j:= (1-b(0)^*)\mathcal V_\mu(S_j^*[I])
  \end{equation}
belong to $\mathcal{H}(b)$, satisfy $\sum_j \|b_j\|_{\mathcal{H}(b)}^2 =1-|b(0)|^2$, and solve the Gleason problem for $b$; that is
\begin{equation}
  b(z)-b(0)=\sum_{j=1}^d z_jb_j(z).
\end{equation}
\end{lem}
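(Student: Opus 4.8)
The plan is to recognize each $b_j$ as the image under the normalized NC Fantappi\`e transform of an explicit vector in $P^2(\mu)$, and then read off the three assertions from facts already proved about $\mathcal V_\mu$, the GNS tuple ${\bf S}$, and the state $\mu$. Since $b$ is quasi-extreme, $P^2_0(\mu)=P^2(\mu)$, so $[I]$ lies in $P^2_0(\mu)$ and the vectors $S_j^*[I]$ are defined; by linearity $b_j=\mathcal V_\mu\big((1-b(0)^*)S_j^*[I]\big)$, so $b_j\in\mathcal H(b)$ is immediate from Theorem~\ref{thm:Vmu}. Along the way I will use the reformulation $\mathcal V_\mu(g)(z)=\langle g,[G_z]\rangle_{P^2(\mu)}$ of the definition of $\mathcal V_\mu$, which follows at once by writing $(I-\zlstar)^{-1}=(1-b(z))^{-1}G_z({\bf L})^*$ and recalling that $\langle[p],[q]\rangle=\mu(q({\bf L})^*p({\bf L}))$.

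For the norm identity I would invoke Lemma~\ref{lem:gns-coisom}: quasi-extremality of $\mu$ makes ${\bf S}$ coisometric, i.e. $\sum_j S_jS_j^*=I$ on $P^2(\mu)$. Hence $\sum_j\|S_j^*[I]\|^2=\langle\sum_j S_jS_j^*[I],[I]\rangle=\|[I]\|^2=\mu(I)$, and since $\mathcal V_\mu$ is isometric, $\sum_j\|b_j\|_{\mathcal H(b)}^2=|1-b(0)|^2\,\mu(I)$. Evaluating (\ref{eqn:NC-db-kernel-2}) at $z=w=0$ (equivalently, evaluating the NC Herglotz formula at the origin) gives $\mu(I)=\frac{1-|b(0)|^2}{|1-b(0)|^2}$, so $\sum_j\|b_j\|^2=1-|b(0)|^2$ as claimed.

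The Gleason identity is the heart of the matter. The key input is the GNS representation of the quasi-extreme state, Proposition~\ref{prop:GNS-cyclic}, which says $p({\bf S})[I]=[p({\bf L})]$ for every polynomial $p\in\mathcal S$. Expanding the resolvent $G_z({\bf L})=(1-b(z)^*)\sum_{k\ge0}({\bf L}z^*)^k$ as a norm-convergent series in $\mathcal S$ and passing to the limit, this identity propagates to give $[G_z]=(1-b(z)^*)(I-{\bf S}z^*)^{-1}[I]$ in $P^2(\mu)$, where ${\bf S}z^*:=\sum_j z_j^*S_j$ is a strict contraction for $z\in\mathbb B^d$. The algebraic step is the resolvent identity $({\bf S}z^*)(I-{\bf S}z^*)^{-1}=(I-{\bf S}z^*)^{-1}-I$, which yields
\begin{equation*}
  ({\bf S}z^*)[G_z]=[G_z]-(1-b(z)^*)[I].
\end{equation*}
Combining this with $\mathcal V_\mu(g)(z)=\langle g,[G_z]\rangle$, with $\mathcal V_\mu([I])=(1-b(0)^*)^{-1}k^b_0$ (read off from (\ref{eqn:NC-db-kernel-2}) at $w=0$), and with the value of $\mu(I)$ above, one computes
\begin{equation*}
  \sum_{j=1}^d z_jb_j(z)=(1-b(0)^*)\Big\langle[I],({\bf S}z^*)[G_z]\Big\rangle=(1-b(0)^*)\Big(\mathcal V_\mu([I])(z)-(1-b(z))\mu(I)\Big),
\end{equation*}
and inserting the explicit values simplifies the right-hand side to $b(z)-b(0)$.

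The main obstacle is essentially bookkeeping: one must keep the complex conjugations and the normalizing factor $1-b(0)^*$ straight through the chain of identifications $\mathcal V_\mu([I])=(1-b(0)^*)^{-1}k^b_0$, $\mathcal V_\mu([G_z])=k^b_z$, and the passage between the $P^2(\mu)$ inner product and pointwise evaluation of Fantappi\`e transforms. In particular one should verify carefully that the operator-norm series $\sum_k({\bf S}z^*)^k$ and the corresponding series of symmetrized monomials $S^{(\nn)}$ converge and together identify $(I-{\bf S}z^*)^{-1}[I]$ with $[\,(I-{\bf L}z^*)^{-1}]$; this is the one point where a genuine limiting argument is needed, but it is routine given that $\|{\bf S}z^*\|\le|z|<1$ and that $p\mapsto[p({\bf L})]$ is continuous. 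Everything else is immediate from Section~\ref{sec:moreGNS}.
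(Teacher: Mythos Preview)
Your proof is correct and takes essentially the same approach as the paper: membership in $\mathcal H(b)$ and the norm identity come from unitarity of $\mathcal V_\mu$, coisometry of ${\bf S}$ (Lemma~\ref{lem:gns-coisom}), and the value $\mu(I)=(1-|b(0)|^2)/|1-b(0)|^2$, while the Gleason identity is obtained by relating $\sum_j z_j\langle S_j^*[I],[G_z]\rangle$ to $\mu\big((I-z{\bf L}^*)^{-1}\big)-\mu(I)$. The only difference is organizational: you package the key step via Proposition~\ref{prop:GNS-cyclic} (writing $[G_z]=(1-b(z)^*)(I-{\bf S}z^*)^{-1}[I]$) together with a resolvent identity, whereas the paper rederives exactly what it needs by approximating $[I]$ by polynomials $[p_m]\in\mathcal S_0$ and peeling off one factor of $z{\bf L}^*$ moment by moment---since Proposition~\ref{prop:GNS-cyclic} was itself proved by that same approximation, the two routes are equivalent.
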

\begin{proof}
From (\ref{eqn:NC-db-kernel-2}) we have
\begin{equation}
 \mu((I-\zlstar)^{-1})(z) =\frac{1}{1-b(0)^*} \frac{1-b(0)^*b(z)}{1-b(z)}
\end{equation}
and
\begin{equation}\label{eqn:mu-of-I}
  \mu(I)=\frac{1-|b(0)|^2}{|1-b(0)|^2}.
\end{equation}
It follows that
\begin{align}
\mu(\sum_{k=1}^\infty (\zlstar)^k ) &=   \mu((I-\zlstar)^{-1}) -\mu(I))\\
&= \label{eqn:muCz}\frac{1}{1-b(0)^*}\frac{b(z)-b(0)}{1-b(z)} 
\end{align}
By the assumption that $b$ is quasi-extreme, there is a sequence of polynomials $p_m\in \mathcal S_0$ such that $[p_m]\to [I]$ in $P^2(\mu)$.  Then for each integer $n\geq 1$, 
\begin{align*}
  \mu((\zlstar)^n) &= \langle [I], [(\zlstar)^n]\rangle_H\\
 &= \lim_{m\to \infty} \langle [p_m],[(\zlstar)^n]\rangle \\
&=\lim_{m\to \infty} \mu((\zlstar)^n p_m(L))\\
&=\lim \sum_{j=1}^d z_j \mu((\zlstar)^{n-1} L_j^*p_m(L)) \\
&=\lim \sum_{j=1}^d z_j \langle S_j^*[p_m], (\zlstar)^{n-1}\rangle \\
&= \sum_{j=1}^d z_j \langle S_j^*[I], (\zlstar)^{n-1}\rangle
\end{align*}
Summing from $n=1$ and multiplying by $(1-b)$, we obtain
\begin{align}
  \frac{b(z)-b(0)}{1-b(0)^*}&=\sum_{j=1}^d z_j (1-b(z))\mathcal K_\mu(S_j^*[I]) \\
&= \sum_{j=1}^d z_j \mathcal V_\mu(S_j^*[I]).
\end{align}
Multiplying this by $1-b(0)^*$ and applying (\ref{eqn:muCz}) and the definition of $b_j$, we have
\begin{equation}
  b(z)-b(0)=\sum_{j=1}^d z_j b_j(z),
\end{equation}
so the functions $b_j$ solve the Gleason problem as claimed. They belong to $\mathcal{H}(b)$ since they lie in the range of $\mathcal V_\mu$. For the norm computation we have 
\begin{equation}
\sum_{j=1}^d \|S_j^*[I]\|^2 = \|[I]\|^2 =\mu(I) = \frac{1-|b(0)|^2}{|1-b(0)|^2}.   
\end{equation}
by Lemma \ref{lem:gns-coisom} and equation (\ref{eqn:mu-of-I}).  Thus
using the definition of $b_j$ and the fact that $\mathcal V_\mu$ is unitary,
\begin{align*}
  \sum_{j=1}^d \|b_j\|^2 &= |1-b(0)|^2 \sum_{j=1}^d \|\mathcal
  V_\mu(S_j^*[I])\|^2 \\
&= |1-b(0)|^2 \sum_{j=1}^d \|S_j^*[I]\|^2 \\
&= 1-|b(0)|^2.
\end{align*}
\end{proof}

\begin{thm}
  If $b$ is quasi-extreme, then there is a unique contractive solution to the Gleason problem in $\mathcal{H}(b)$, and this solution is extremal. 
\end{thm}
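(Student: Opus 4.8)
The plan is to reduce the statement to Lemma~\ref{lem:gleason-for-phi} together with the adjoint formula \eqref{eqn:X*-formula}, so that the theorem becomes essentially a corollary. I would first dispose of existence. Lemma~\ref{lem:gleason-for-phi} produces functions $b_1,\dots,b_d\in\mathcal H(b)$ which solve the Gleason problem for $b$ and satisfy $\sum_{j=1}^d\|b_j\|_{\mathcal H(b)}^2=1-|b(0)|^2$; these are Gleason data of the type (i)--(ii) of Theorem~\ref{thm:canon-on-kernels} for which \emph{equality} holds in (ii). Feeding this data into \eqref{eqn:canon-on-kernels} and invoking Theorem~\ref{thm:canon-on-kernels}, the resulting tuple ${\bf X}=(X_1,\dots,X_d)$ is a contractive solution to the Gleason problem in $\mathcal H(b)$, and it is extremal precisely because equality holds in (ii). So an extremal---in particular, a contractive---solution exists.

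Next, uniqueness. Suppose ${\bf X}=(X_1,\dots,X_d)$ and ${\bf X}'=(X_1',\dots,X_d')$ are two contractive solutions to the Gleason problem in $\mathcal H(b)$, with associated data $(b_j)$ and $(b_j')$ coming from Theorem~\ref{thm:canon-on-kernels}. The formula \eqref{eqn:X*-formula} was derived for an arbitrary contractive solution, so it applies to both; subtracting the two instances gives, for every $f\in\mathcal H(b)$ and every $j$,
\[
\bigl(X_j^*-(X_j')^*\bigr)f \;=\; \langle f,\,b_j'-b_j\rangle_{\mathcal H(b)}\, b .
\]
The left-hand side lies in $\mathcal H(b)$, being the image of $f$ under a difference of bounded operators on $\mathcal H(b)$; hence $\langle f,\,b_j'-b_j\rangle_{\mathcal H(b)}\, b\in\mathcal H(b)$ for all $f$. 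If $b_j'\neq b_j$ for some $j$, take $f=b_j'-b_j$: then the scalar $\langle f,\,b_j'-b_j\rangle=\|b_j'-b_j\|^2$ is nonzero, and dividing through forces $b\in\mathcal H(b)$. But $b$ is quasi-extreme, so $b\notin\mathcal H(b)$ by Theorem~\ref{thm:non-extreme-if-in}---a contradiction. Therefore $b_j=b_j'$ for all $j$, and then \eqref{eqn:X*-formula} gives $X_j^*=(X_j')^*$, so ${\bf X}={\bf X}'$. Combined with the first paragraph, the unique contractive solution is the one built from the data of Lemma~\ref{lem:gleason-for-phi}, hence extremal.

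The substantive work is all packed into Lemma~\ref{lem:gleason-for-phi}, and through it into Lemma~\ref{lem:gns-coisom} (coisometry of the GNS tuple ${\bf S}$ in the quasi-extreme case) and Theorem~\ref{thm:Vmu} (unitarity of the normalized NC Fantappi\`e transform): producing \emph{explicit} extremal Gleason data from the state $\mu$ is the whole point, and is where the noncommutative machinery earns its keep. Given that lemma, the only point needing care in the argument above is to recognize that \eqref{eqn:X*-formula} holds for \emph{every} contractive solution---so that $X_j^*-(X_j')^*$ is literally an operator from $\mathcal H(b)$ into itself---after which the contradiction with $b\notin\mathcal H(b)$ is immediate from Theorem~\ref{thm:non-extreme-if-in}. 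One could instead attempt a convexity argument (the contractive solutions form a convex set affinely parametrized by the data $(b_j)$, on which $\sum_j\|b_j\|^2$ is strictly convex), but one would still need Lemma~\ref{lem:gleason-for-phi} to identify the extremal solution as the unique one, so I would present the direct computation above.
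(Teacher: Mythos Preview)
Your proof is correct and follows essentially the same route as the paper: existence via Lemma~\ref{lem:gleason-for-phi} and Theorem~\ref{thm:canon-on-kernels}, uniqueness by showing that two distinct contractive solutions would force $b\in\mathcal H(b)$, contradicting Theorem~\ref{thm:non-extreme-if-in}. The only cosmetic difference is that the paper works with the kernel formula \eqref{eqn:canon-on-kernels} (so $(X_j-\widetilde X_j)k^b_w=(b_j-\widetilde b_j)b(w)^*$, whence $k^b_w\mapsto b(w)^*$ is a bounded functional and $b\in\mathcal H(b)$ via Riesz), whereas you use the adjoint formula \eqref{eqn:X*-formula} directly; your version is arguably a shade cleaner.
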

\begin{proof}
Define 
\begin{equation}
  X_jk^b_w =w_j^* k^b_w -b(w)^*b_j
\end{equation}
where the $b_j$ are chosen as in Lemma~\ref{lem:gleason-for-phi}. It
is then immediate from Theorem~\ref{thm:canon-on-kernels} that ${\bf
  X}=(X_1, \dots X_j)$ is an extremal solution to the Gleason problem
in $\mathcal H(b)$.

Uniqueness will be proved by contradiction. Suppose there are two Gleason tuples ${\bf X}, \widetilde{{\bf X}}$.  These must be defined as in equation (\ref{eqn:canon-on-kernels}), for functions $b_j$ and $\widetilde{b}_j$ satisfying conditions (i) adn (ii) of Theorem~\ref{thm:canon-on-kernels}.  But then for each $j$ the densely defined operator
  \begin{equation}
    (X_j-\widetilde{X}_j) k^b_w =(b_j-\widetilde{b}_j)b(w)^*
  \end{equation}
is bounded on $\mathcal{H}(b)$, and is nonzero for some $j$.  Fix such a $j$; put $g=b_j-\widetilde{b}_j$.  So $k^b_w\to b(w)^*g$ is a bounded rank-one operator, which means that $k^b_w\to b(w)^*$ extends to a bounded linear functional on $\mathcal{H}(b)$.  Then there is an $h\in \mathcal{H}(b)$ with
\begin{equation}
  b(w)^* =\langle k^b_w,h\rangle h(w)^*
\end{equation}
and it follows that $h=b$, so $b\in \mathcal{H}(b)$.  Since $b$ was assumed quasi-extreme, this contradicts Theorem~\ref{thm:non-extreme-if-in}.
\end{proof}

\section{Rank-one perturbations and intertwining}\label{sec:intertwining}

The goal of this section is to prove Theorem~\ref{thm:clark-ball},
which is the analog of the one-variable Theorem~\ref{thm:clark2}.  Fix
a quasi-extreme multiplier $b$ with its family of AC states
$\{\mu_\alpha\}$.  To unclutter the notation we will write $\mathcal
V_\alpha$ for the Fantappie transform $\mathcal V_{\mu_\alpha}$.  As
before, ${\bf X}$ denotes the unique solution to the Gleason
problem in $\mathcal{H}(b)$, and we write ${\bf S}^\alpha=(S_1^\alpha,
\dots S_d^\alpha)$ for the co-isometric GNS tuple acting on the GNS
space $P^2(\mu_\alpha)$.
\begin{thm}\label{thm:clark-ball}
  Let $b$ be a quasi-extreme multiplier of $H^2_d$.  Then the rank-one perturbation of ${\bf X}$ defined by
  \begin{equation}\label{eqn:perturb}
    X_j +\abar(1-\abar b(0))^{-1} b_j\otimes k^b_0
  \end{equation}
is cyclic, isometric, and unitarily equivalent to ${\bf S}^{\alpha *}$ under the normalized Fantappi\`{e} transform $\mathcal V_\alpha$:
\begin{equation}
  \mathcal V_\alpha S_j^{\alpha *} = (X_j +\abar(1-\abar b(0))^{-1} b_j\otimes k^b_0) \mathcal V_\alpha.
\end{equation}

Moreover, if $\nu_\alpha$ is the unique extension of $\mu_\alpha$ to $\mathcal A+\mathcal A^*$, then the GNS construction applied to $\nu_\alpha$ produces a Cuntz tuple ${\bf U}^\alpha$, which is unitarily equivalent to the minimal isometric dilation of ${\bf S}^\alpha$. 
\end{thm}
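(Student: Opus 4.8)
The plan is to transport the coisometric GNS tuple ${\bf S}^\alpha$ to $\mathcal{H}(b)$ through the unitary $\mathcal{V}_\alpha$ of Theorem~\ref{thm:Vmu} and identify the resulting tuple explicitly. First, every $\mu_\alpha$ is again quasi-extreme: since $\mathcal{H}(\alpha^* b)=\mathcal{H}(b)$, Theorem~\ref{thm:non-extreme-if-in} applied to $\alpha^* b$ shows $\alpha^* b$ is quasi-extreme, and $\mu_\alpha$ is precisely its representing state. Hence $P^2_0(\mu_\alpha)=P^2(\mu_\alpha)$, the tuple ${\bf S}^\alpha$ is coisometric (Lemma~\ref{lem:gns-coisom}) and cyclic with cyclic vector $[I]$ (Proposition~\ref{prop:GNS-cyclic}), so ${\bf S}^{\alpha *}$ is isometric and $S_j^{\alpha *}$ is a bounded operator on all of $P^2(\mu_\alpha)$. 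Put $T_j:=\mathcal{V}_\alpha S_j^{\alpha *}\mathcal{V}_\alpha^{-1}$. Then ${\bf T}=(T_1,\dots,T_d)$ is automatically isometric, ${\bf T}^{*}$ is cyclic with cyclic vector $\mathcal{V}_\alpha[I]$ (a scalar multiple of $k^b_0$, as a short computation shows), and $\mathcal{V}_\alpha S_j^{\alpha *}=T_j\mathcal{V}_\alpha$ holds by construction. The first half of the theorem therefore reduces to the single operator identity $T_j=X_j+\alpha^*(1-\alpha^* b(0))^{-1}\,b_j\otimes k^b_0$.

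Both sides are bounded, so it is enough to check this on the dense span of the reproducing kernels $k^b_w$, $w\in\mathbb{B}^d$. From the kernel formula (\ref{eqn:NC-db-kernel-2}), exactly as in the proof of Theorem~\ref{thm:Vmu}, one has $\mathcal{V}_\alpha[(I-{\bf L}w^{*})^{-1}]=(1-\alpha b(w)^{*})^{-1}k^b_w$, hence $\mathcal{V}_\alpha^{-1}k^b_w=(1-\alpha b(w)^{*})[(I-{\bf L}w^{*})^{-1}]$. Writing $(I-{\bf L}w^{*})^{-1}=I+\sum_{k\ge1}({\bf L}w^{*})^{k}$ and using $L_j^{*}({\bf L}w^{*})^{k}=w_j^{*}({\bf L}w^{*})^{k-1}$ gives $S_j^{\alpha *}[(I-{\bf L}w^{*})^{-1}]=S_j^{\alpha *}[I]+w_j^{*}[(I-{\bf L}w^{*})^{-1}]$, so $T_jk^b_w=(1-\alpha b(w)^{*})\,\mathcal{V}_\alpha(S_j^{\alpha *}[I])+w_j^{*}k^b_w$. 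The one genuinely substantive point is to show $\mathcal{V}_\alpha(S_j^{\alpha *}[I])=\alpha^*(1-\alpha^* b(0))^{-1}b_j$. For this, apply Lemma~\ref{lem:gleason-for-phi} to the multiplier $\alpha^* b$, whose AC state is $\mu_\alpha$: it produces a contractive, extremal solution of the Gleason problem for $\alpha^* b$, which by the uniqueness theorem of Section~\ref{sec:gleason} must be the one built from the functions $\alpha^* b_j$; chasing the normalizing constants through the statement of Lemma~\ref{lem:gleason-for-phi} then yields the claimed value. Substituting this, and using $X_jk^b_w=w_j^*k^b_w-b(w)^{*}b_j$ together with $(b_j\otimes k^b_0)k^b_w=k^b_w(0)\,b_j=(1-b(0)b(w)^{*})b_j$, a short algebraic simplification (the relevant scalar identity being $\alpha^*(1-\alpha^* b(0))^{-1}=(\alpha-b(0))^{-1}$) confirms $T_jk^b_w=(X_j+\alpha^*(1-\alpha^* b(0))^{-1}b_j\otimes k^b_0)k^b_w$ for all $w$, and hence the required identity.

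For the final assertion: by Theorem~\ref{thm:extension} the quasi-extreme state $\mu_\alpha$ has a unique extension $\nu_\alpha$ to $\mathcal{A}+\mathcal{A}^{*}$, and this extension is tight; by Proposition~\ref{prop:GNS-unitary} its GNS tuple ${\bf U}^\alpha$ is a row unitary, i.e.\ a Cuntz tuple. Quasi-extremity forces $Q_0[I]=P_0[I]=[I]$, hence $Q^2_0(\nu_\alpha)=Q^2(\nu_\alpha)$, so by the proposition on minimality of tight dilations ${\bf U}^\alpha$ (acting on all of $Q^2(\nu_\alpha)$) is a minimal isometric dilation of ${\bf S}^\alpha$; the Frazho--Bunce--Popescu uniqueness of minimal isometric dilations then shows it is the minimal isometric dilation of ${\bf S}^\alpha$ up to unitary equivalence.

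The main obstacle is the bookkeeping in the second paragraph. The delicate point is that the functions $b_j$ in the rank-one perturbation are defined from the \emph{canonical} (unique, extremal) solution of the Gleason problem for $b$ itself, whereas the construction via $\mathcal{V}_\alpha$ naturally produces functions attached to $\mu_\alpha$; identifying these — after tracking the unimodular factor $\alpha$ and the conjugations $b(0)\leftrightarrow b(0)^{*}$ that appear when one passes between $b$, $\alpha^* b$, $\mu_\alpha$, and $\mathcal{V}_\alpha$ — is exactly where the uniqueness of the contractive Gleason solution in the quasi-extreme case is indispensable, since without it there is no reason the two families of functions should coincide.
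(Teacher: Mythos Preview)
Your argument is correct and follows essentially the same route as the paper: verify the intertwining on the dense family $G_w^\alpha$ (equivalently on the $k^b_w$), using the elementary identity $L_j^*({\bf L}w^*)^n=w_j^*({\bf L}w^*)^{n-1}$ together with the identification of $\mathcal V_\alpha(S_j^{\alpha *}[I])$ coming from Lemma~\ref{lem:gleason-for-phi}, and then read off cyclicity and isometry from the known properties of ${\bf S}^\alpha$; the Cuntz-dilation statement is handled exactly as you do. The only difference worth noting is expository: you make explicit that Lemma~\ref{lem:gleason-for-phi} must be applied to $\alpha^*b$ (with AC state $\mu_\alpha$) and that the resulting Gleason data agrees with $\alpha^*b_j$ by the uniqueness theorem for the contractive Gleason solution, whereas the paper simply writes $S_j^{\alpha *}[I]=\alpha^*(1-\alpha^* b(0))^{-1}\mathcal V_\alpha^{-1}b_j$ ``from the definition of $b_j$ in Lemma~\ref{lem:gleason-for-phi}'' and leaves that identification implicit.
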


\begin{proof}
Since we already know ${\bf S}$ is cyclic and coisometric (Lemma~\ref{lem:gns-coisom} and Proposition~\ref{prop:GNS-cyclic}), everything follows once we prove the intertwining property; and in fact the intertwining holds even when $b$ is not quasi-extreme.

To prove the intertwining relation, recall from the proof of Theorem~\ref{thm:Vmu} that the NC kernel functions
\begin{equation}
  G_w^\alpha =(1-\alpha b(w)^*)[(1-{{\bf L}w^*})^{-1}]
\end{equation}
are dense in $P^2(\mu_\alpha)$, and $\mathcal V_\alpha$ takes $G_w^\alpha$ onto the reproducing kernel $k^b_w$ of $\mathcal{H}(b)$.  To compute the action of $S_j^{\alpha *}$ on $G_w^\alpha$, we first have for integers $n\geq 1$
\begin{align}
  S_j^{\alpha *}[({\bf L}w^*)^n] &= [L_j^*({\bf L}w^*)^n] \\
&= w_j^*  [({\bf L}w^*)^{n-1}] 
\end{align}
and, when $n=0$, from the definition of $b_j$ in Lemma~\ref{lem:gleason-for-phi} 
\begin{equation}
  S_j^{\alpha *} [I] = \abar (1-\abar b(0))^{-1}\mathcal V_\alpha^{-1} b_j.
\end{equation}
Summing over $n$, we obtain
\begin{align}
  S_j^{\alpha *} G_w^\alpha &= (1-\alpha b(w)^*)\sum_{n=0}^\infty S_j^{\alpha *} [({\bf L}w^*)^n] \\
&= (1-\alpha b(w)^*) \left( \abar (1-\abar b(0))^{-1}\mathcal V_\alpha^{-1} b_j + w_j^*  \sum_{k=0}^\infty [({\bf L}w^*)^n] \right)
\end{align}
and therefore
\begin{equation}\label{eqn:s-check}
  \mathcal V_\alpha S_j^{\alpha *} G_w^\alpha = \abar (1-\abar b(0))^{-1}(1-\alpha b(w)^*) b_j +w_j^*  k^b_w
\end{equation}
On the other hand, by the definition of $X_j$, 
\begin{align}
(X_j+  \abar(1-\abar b(0))^{-1} b_j\otimes k^b_0)\mathcal V_\alpha G_w^\alpha &= (X_j+  \abar(1-\abar b(0))^{-1} b_j\otimes k^b_0)k^b_w \\
&= w_j^* k^b_w -b_jb(w)^* + \abar (1-\abar b(0))^{-1} (1-b(0)b(w)^*) b_j \\
&= w_j^* k^b_w + \abar (1-\abar b(0))^{-1}(1-\alpha b(w)^*) b_j
\end{align}
which agrees with (\ref{eqn:s-check}).

Finally, the claims about the Cuntz tuple ${\bf U}^\alpha$ follow from the
fact that $\mu$ is quasi-extreme and Proposition~\ref{prop:GNS-unitary}.
\end{proof}
Let us recapitulate the relationship between the function $b$, the
state $\mu$ on $\mathcal S+\mathcal S^*$ representing $b$, and the
operator tuples ${\bf S}^{\alpha}$, ${\bf U}^\alpha$.  Starting with
$b$ one obtains the AC states ${\mu_\alpha}$ via the NC Herglotz
representation.  Since $b$ is quasi-extreme, each $\mu_\alpha$ is
quasi-extreme and determines a coisometric tuple ${\bf
  S}^\alpha$. This ${\bf S}^\alpha$ has a minimal row-unitary dilation
${\bf U}^\alpha$.  On the other hand, $\mu_\alpha$ has a unique
extension to a positive functional $\nu_\alpha$ on the full
Cuntz-Toeplitz operators system $\mathcal A+\mathcal A^*$.  Applying
the GNS construction to $\nu_\alpha$ gives ${\bf U}^\alpha$ again.  In
this sense we think of $\nu_\alpha$ as the ``spectral measure'' of the
row unitary ${\bf U}^\alpha$.  Moreover, a suitable rank-one
perturbation of ${\bf S}^{\alpha *}$ is unitarily equivalent, via the NC
Fantappie transform $\mathcal V_\alpha$, to the unique contractive
solution to the Gleason problem in $\mathcal H(b)$.

The only difference between this picture and the one-variable
situation is, of course, that there is no distinction between
$\mathcal S+\mathcal S^*$ and $\mathcal A+\mathcal A^*$; they are both
just (dense subspaces of) $C(\mathbb T)$, and ${\bf S}$ and ${\bf U}$ are both just the
unitary operator $M_\zeta$ acting on $P^2(\mu)=L^2(\mu)$.
  
A natural question which arises at this point is: which unitaries
${\bf U}$ can arise by this construction? In one variable the answer
is simple: every cyclic unitary operator. In the present setting, the
answer is somewhat more delicate, in that the row unitary ${\bf U}$
must not only be cyclic (thus determining a ``spectral measure''
$\nu$), but ${\bf U}$ must also be the minimal dilation of its
compression to the subspace $P^2(\mu)\subset Q^2(\nu)$.  This will be
explored further in a separate paper examining the characteristic
functions associated to rank-one perturbations of ${\bf S}$ and ${\bf
  U}$.

\section{Spectral results}\label{sec:spectra}
Finally, we examine the spectra of the solutions ${\bf X}$ to the
Gleason problem and the GNS tuples ${\bf S}$.  We begin with some
preliminaries on angular derivaties in the ball, in particular for
multipliers of $H^2_d$.  

Say that a point $\zeta\in\partial \mathbb B^d$ is a {\em C-point} for $b$ if
\begin{equation}\label{eqn:cpoint-def}
  \liminf_{z\to \zeta} \frac{1-|b(z)|^2}{1-|z|^2} = L <\infty.
\end{equation}
By \cite[Section 8.5]{Rud1}, $\zeta$ is a C-point for $b$ if and only
if $b$ and its directional derivative $D_\zeta b$ both have finite
limits as $z\to \zeta$ non-tangentially, with $\lim_{z\to
  \zeta}|b(z)|=1$ and $\lim_{z\to \zeta} D_\zeta b(z)>0$ (briefly, $b$
has a {\em finite angular derivative} at $\zeta$). However when the
function $b$ is a contractive multiplier of $H^2_d$, a somewhat
stronger theorem is available (see \cite{Jur-jc}). In particular there
is a connection between angular derivatives of $b$ and the $\mathcal
H(b)$ spaces which closely parallels the one-dimensional results of
Sarason \cite[Chapter VI]{Sar-book}.

We summarize the results needed from \cite{Jur-jc} in the following theorem:
\begin{thm}\label{thm:JC}
Let $b$ be a contractive multiplier of the Drury-Arveson space
$H^2_d$ and let $\zeta\in \partial \mathbb B^d$. The following are
equivalent:
\begin{itemize}
\item[i)] $\zeta$ is a C-point for $b$
\item[ii)] there exists
$\alpha\in\mathbb T$ such that the function
\begin{equation}
k^b_\zeta(z):= \frac{1-b(z)\alpha^*}{1-z\zeta^*}  
\end{equation}
belongs to $\mathcal H(b)$.
\end{itemize}
When these occur, $b$ has nontangential limit
$\alpha$ at $\zeta$, and additionally every $f\in \mathcal H(b)$ has a
finite nontangential limit at $\zeta$, equal to $\langle
f,k^b_\zeta\rangle_{\mathcal H(b)}$. Moreover we have $\|k^b_\zeta\|^2 =L$.
\end{thm}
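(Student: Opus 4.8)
The plan is to carry out, inside the reproducing kernel Hilbert space $\mathcal H(b)$, essentially the one-variable argument of Sarason \cite[Chapter~VI]{Sar-book}, importing from \cite[Section~8.5]{Rud1} and \cite{Jur-jc} only the genuinely ball-specific Julia--Carath\'eodory estimates. For (i)$\Rightarrow$(ii), choose $z_n\to\zeta$ realizing the $\liminf$, so $\|k^b_{z_n}\|_{\mathcal H(b)}^2 = k^b_{z_n}(z_n) = (1-|b(z_n)|^2)/(1-|z_n|^2)\to L$. The sequence $\{k^b_{z_n}\}$ is then bounded in $\mathcal H(b)$, so after passing to a subsequence it converges weakly to some $g$ with $\|g\|^2\le L$; passing to a further subsequence we may assume $b(z_n)\to\alpha$, and $|\alpha|=1$ since $1-|b(z_n)|^2 = (1-|z_n|^2)\,\|k^b_{z_n}\|^2\to 0$. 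Pairing $g$ against each kernel $k^b_w$, $w\in\mathbb B^d$, and using $\langle k^b_{z_n},k^b_w\rangle = k^b_{z_n}(w) = (1-b(w)b(z_n)^*)/(1-wz_n^*)\to (1-b(w)\alpha^*)/(1-w\zeta^*)$, we get $g(w)=k^b_\zeta(w)$ for every $w$; hence $k^b_\zeta\in\mathcal H(b)$ with $\|k^b_\zeta\|^2\le L$.

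For (ii)$\Rightarrow$(i), suppose $k^b_\zeta\in\mathcal H(b)$ for some $\alpha\in\mathbb T$. Restricting to the radius $z=r\zeta$, the reproducing property gives $\langle k^b_\zeta,k^b_{r\zeta}\rangle = k^b_\zeta(r\zeta) = (1-b(r\zeta)\alpha^*)/(1-r)$ and $\|k^b_{r\zeta}\|^2 = (1-|b(r\zeta)|^2)/(1-r^2)$. Cauchy--Schwarz together with $|1-b(r\zeta)\alpha^*|\ge 1-|b(r\zeta)|$ then yields $(1-|b(r\zeta)|)/(1-r)\le \|k^b_\zeta\|^2\,(1+|b(r\zeta)|)/(1+r)$, so $1-|b(r\zeta)|\lesssim 1-r\to 0$; hence $|b(r\zeta)|\to 1$, and since then $(1+|b(r\zeta)|)/(1+r)\to 1$ we obtain $\limsup_{r\to1}(1-|b(r\zeta)|^2)/(1-r^2)\le\|k^b_\zeta\|^2$. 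In particular $L\le\|k^b_\zeta\|^2<\infty$, so $\zeta$ is a C-point; and boundedness of $k^b_\zeta(r\zeta)$ forces $b(r\zeta)\to\alpha$, so $\alpha$ is the radial limit of $b$ at $\zeta$. Combining the two implications, whenever (i) and (ii) both hold we have $\|k^b_\zeta\|^2\le L$ and $L\le\|k^b_\zeta\|^2$, i.e. $\|k^b_\zeta\|^2=L$.

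Finally, for the non-tangential assertions --- which is where I expect the real difficulty to be --- one invokes the Julia--Carath\'eodory theorem in the ball (\cite[Section~8.5]{Rud1}, in the multiplier-sharpened form of \cite{Jur-jc}): once $\zeta$ is a C-point it provides both that $b$ has a finite non-tangential limit at $\zeta$, necessarily equal to its radial limit $\alpha$, and that $(1-|b(z)|^2)/(1-|z|^2)$ stays bounded (indeed tends to $L$) as $z\to\zeta$ through any admissible approach region. Granting this, for $f\in\mathcal H(b)$ we have $f(z)=\langle f,k^b_z\rangle$, and as $z\to\zeta$ non-tangentially $k^b_z(w)\to k^b_\zeta(w)$ pointwise while $\|k^b_z\|$ is bounded, so $k^b_z\rightharpoonup k^b_\zeta$ and $f(z)\to\langle f,k^b_\zeta\rangle$. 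The obstacle is precisely this boundary estimate: unlike the disk, one cannot reduce it to the Schwarz--Pick lemma by subordination, and controlling $(1-|b(z)|^2)/(1-|z|^2)$ away from the radius needs the Kor\'anyi-region analysis of \cite{Rud1} and \cite{Jur-jc}; the rest is the formal reproducing-kernel bookkeeping already present when $d=1$.
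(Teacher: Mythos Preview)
The paper does not actually prove this theorem; it is quoted verbatim as a summary of results from \cite{Jur-jc} (see the sentence immediately preceding the statement: ``We summarize the results needed from \cite{Jur-jc} in the following theorem''). So there is no proof in the paper to compare against.

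That said, your argument is a correct and standard reproducing-kernel proof, and is essentially the approach of \cite{Jur-jc} specialized to scalar-valued $b$. The weak-limit extraction for (i)$\Rightarrow$(ii) and the Cauchy--Schwarz computation for (ii)$\Rightarrow$(i) are exactly right, and the squeeze giving $\|k^b_\zeta\|^2=L$ is clean. Your honest acknowledgement that the nontangential step requires the Kor\'anyi-region Julia--Carath\'eodory estimates from \cite[Section~8.5]{Rud1} (boundedness of $(1-|b(z)|^2)/(1-|z|^2)$ in admissible approach regions, not just radially) is also correct; this is precisely the ball-specific input, and once granted, the conclusion $k^b_z\rightharpoonup k^b_\zeta$ and hence $f(z)\to\langle f,k^b_\zeta\rangle$ follows as you say. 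One small sharpening: in (i)$\Rightarrow$(ii) you should note that the $\alpha$ you obtain as a subsequential limit of $b(z_n)$ is in fact uniquely determined (it equals the nontangential limit of $b$ at $\zeta$ guaranteed by Julia--Carath\'eodory once $L<\infty$), so no genuine choice of subsequence is needed.
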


It what follows we will abuse the notation slightly and write $S_j^*$
for the rank-one perturbation of $X_j$ in (\ref{eqn:perturb}).
\begin{thm}\label{thm:eigenvalues}
  Let $b$ be quasi-extreme with $AC$ states $\{\mu_\alpha\}_{\alpha\in\mathbb T}$ and ${\bf
    S^\alpha}$ the Clark tuple for $\mu_\alpha$.  For
  fixed $\zeta\in\partial \mathbb B^d$, the eigenvalue problem
  \begin{equation}
    \label{eqn:eigenvalues of S}
    \sum_{j=1}^d \zeta_j^*S_j^\alpha h = h
  \end{equation}
has a solution in $\mathcal H(b)$ if and only if $b$ has finite angular derivative at
$\zeta$ and $b(\zeta)=\alpha$, in which case the eigenspace is
one-dimensional and spanned by $k^b_\zeta$. 
\end{thm}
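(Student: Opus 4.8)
The plan is to reduce the eigenvalue problem for $\sum_j \zeta_j^* S_j^\alpha$ to a statement about membership of the candidate function $k^b_\zeta$ in $\mathcal H(b)$, and then to invoke Theorem~\ref{thm:JC} to tie this to the finite angular derivative condition. First, by Theorem~\ref{thm:clark-ball} we know that $\mathcal V_\alpha$ intertwines $S_j^{\alpha *}$ with the rank-one perturbation $\widetilde X_j := X_j + \abar(1-\abar b(0))^{-1} b_j\otimes k^b_0$ acting on $\mathcal H(b)$. Since $\mathcal V_\alpha$ is unitary, the eigenvalue equation $\sum_j \zeta_j^* S_j^{\alpha} h = h$ is equivalent (after identifying $h$ with its image $f = \mathcal V_\alpha h \in \mathcal H(b)$, and taking adjoints appropriately — note the equation in \eqref{eqn:eigenvalues of S} involves $S_j^\alpha$, not its adjoint) to an eigenvalue equation for the $\widetilde X_j$'s on $\mathcal H(b)$. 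Actually the cleanest route is to observe that $\sum_j \zeta_j^* S_j^\alpha$ is the adjoint of $\sum_j \zeta_j (S_j^\alpha)^*$, so I would first analyze $\sum_j \zeta_j (S_j^\alpha)^* $, transport it via $\mathcal V_\alpha$ to $\sum_j \zeta_j \widetilde X_j$ on $\mathcal H(b)$, and study \emph{its} adjoint there.

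Next I would compute the action of $\sum_j \zeta_j \widetilde X_j$ on reproducing kernels. Using \eqref{eqn:canon-on-kernels}, $\sum_j \zeta_j (X_j k^b_w)(z) = (z \cdot \text{something})$; more precisely, summing the Gleason identity $\sum_j z_j (X_j k^b_w)(z) = k^b_w(z) - k^b_w(0)$ is not quite what I want — instead I need $\sum_j \zeta_j X_j$, which acts on $k^b_w$ by $\sum_j \zeta_j (w_j^* k^b_w - b_j b(w)^*) = (w\zeta^{\,*})^* \,$... wait, $\sum_j \zeta_j w_j^* = \langle \zeta, w\rangle$-type pairing; call it $\overline{\langle w,\zeta\rangle}$ with the convention in the paper. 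So $\sum_j \zeta_j (X_j k^b_w) = \overline{w\zeta^*}\, k^b_w - b(w)^* \sum_j \zeta_j b_j$, and the rank-one term contributes $\abar(1-\abar b(0))^{-1} (1 - b(0) b(w)^*)\sum_j \zeta_j b_j$ times $\overline{k^b_0(w)} = 1$... I would carry out this computation carefully, the upshot being a formula $\bigl(\sum_j \zeta_j \widetilde X_j\bigr) k^b_w = \overline{w\zeta^*}\, k^b_w + c(w) g$ for an explicit scalar function $c(w)$ and fixed vector $g = \sum_j \zeta_j b_j$, mirroring \eqref{eqn:s-check}. Taking adjoints, $\bigl(\sum_j \zeta_j \widetilde X_j\bigr)^* f(z) = \overline{z\zeta^*}\, f(z) - \langle f, g\rangle \,(\text{explicit function})$, analogous to \eqref{eqn:X*-formula}; then $h$ (identified in $\mathcal H(b)$) is an eigenvector with eigenvalue $1$ precisely when $(1 - \overline{z\zeta^*}) f(z) = \langle f,g\rangle\,(\cdots)$, which forces $f$ to be a scalar multiple of $\frac{1-\alpha^* b(z)}{1-z\zeta^*} = k^b_\zeta(z)$ — provided that function lies in $\mathcal H(b)$. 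One then checks directly that this $k^b_\zeta$ does satisfy the eigenvalue equation (the $b(\zeta)=\alpha$ condition emerging from matching the numerator $1 - \alpha^* b(z)$ against $1 - b(0) b(\zeta)^*$-type terms at the boundary).

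Finally, membership of $k^b_\zeta$ in $\mathcal H(b)$ is exactly condition (ii) of Theorem~\ref{thm:JC}, which is equivalent to $\zeta$ being a C-point for $b$, i.e.\ $b$ having a finite angular derivative at $\zeta$; and Theorem~\ref{thm:JC} also gives that the relevant unimodular constant is $\alpha = b(\zeta)$ (the nontangential limit), pinning down which $\mu_\alpha$ admits the eigenvector. One-dimensionality of the eigenspace follows from the rigidity of the equation $(1-\overline{z\zeta^*})f(z) = (\text{const})\cdot(\text{fixed function})$: any two solutions differ by a function vanishing appropriately, forcing proportionality. The main obstacle I anticipate is bookkeeping: getting the adjoints and the conjugation conventions (the paper writes $zw^* = \sum z_j w_j^*$, and $S_j^\alpha$ versus $S_j^{\alpha*}$) consistent so that the eigenvalue $1$ in \eqref{eqn:eigenvalues of S} corresponds correctly to the fixed points of $\sum_j \zeta_j \widetilde X_j$ under $\mathcal V_\alpha$, and then verifying that the scalar coefficients in the rank-one perturbation term conspire to produce exactly the numerator $1 - \alpha^* b(z)$ in $k^b_\zeta$ rather than some other combination. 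The identity \eqref{eqn:muCz} and the formula for $b_j$ from Lemma~\ref{lem:gleason-for-phi} should be the tools that make these coefficients match.
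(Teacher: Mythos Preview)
Your approach is correct and essentially the same as the paper's: transport the eigenvalue problem to $\mathcal H(b)$ via $\mathcal V_\alpha$ (so that $S_j^\alpha$ becomes the adjoint of the rank-one perturbation $\widetilde X_j$), use the explicit formula \eqref{eqn:X*-formula} together with \eqref{eqn:perturb} to rewrite $\sum_j \zeta_j^* S_j^\alpha h = h$ as $(1-z\zeta^*)h(z) = c\,(1-\alpha^* b(z))$, and then invoke Theorem~\ref{thm:JC}. The only difference is that the paper applies \eqref{eqn:X*-formula} directly instead of first computing $\sum_j \zeta_j \widetilde X_j$ on kernels and then dualizing, which avoids the conjugation bookkeeping you flag as the main obstacle.
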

\begin{proof}
  First assume (\ref{eqn:eigenvalues of S}) has a nonzero solution
  $h\in \mathcal H(b)$.  Write $\widetilde{b}(z)=\sum_{j=1}^d \zeta_j
  b_j(z)$. Then using (\ref{eqn:perturb}) and (\ref{eqn:X*-formula})
  to compute $S_j^\alpha$, we have
  \begin{align}
    h(z) & = \left(\sum_{j=1}^d \zeta_j^* S_j^\alpha h\right)(z)\\ &= z\zeta^*
    h(z) -\langle h, \widetilde{b}\rangle_{\mathcal H(b)} b(z) +\alpha\langle h,
    \widetilde{b}\rangle_{\mathcal H(b)} k^b_0
  \end{align}
and solving for $h$ we find
\begin{equation}
  \label{eq:1}
  h(z) = \frac{\alpha \langle h, \widetilde{b}\rangle}{1-b(0)^*\alpha}
  \frac{1-b(z)\alpha^*}{1-\langle z,\zeta\rangle} = ck^b_\zeta(z)
\end{equation}
for some nonzero $c$. Thus by Theorem~\ref{thm:JC}, $b$ has an angular
derivative at $\zeta$ with $b(\zeta)=\alpha$.  Conversely, suppose the angular derivative condition
holds at $\zeta$, with $b(\zeta)=\alpha$. Then by Theorem~\ref{thm:JC}
the function $k^b_\zeta$ lies in $\mathcal H(b)$. Note also that by
the reproducing property of $k^b_\zeta$ at $\zeta$, we have
\begin{equation}
  \label{eq:2}
  \langle k^b_\zeta, \widetilde b\rangle = \sum_{j=1}^d
  \zeta_j^*b_j(\zeta)^* = b(\zeta)^*-b(0)^* = \alpha^*-b(0)^*.
\end{equation}
With this in hand, repeating the calculation in the first part of the proof shows that
\begin{equation}
\sum_{j=1}^d \zeta_j^* S_j^\alpha k^b_\zeta = k^b_\zeta.    
\end{equation}

\end{proof}

Finally, we include a result on the essential Taylor spectrum of ${\bf
  X}$. For this result we do not need to assume $b$ is quasi-extreme,
and ${\bf X}$ can be any contractive solution to the Gleason problem
in $\mathcal H(b)$.  First let us note that while the operators $X_j$
do not commute, we see from Theorem~\ref{thm:canon-on-kernels} that
the commutators $[X_i, X_j]$ have finite rank. Thus if we let $\pi$
denote the quotient map to the Calkin algebra, then the $\pi(X_j)$
form a commuting row contraction, and it then makes sense to talk
about its Taylor spectrum.  It turns out that we do not need the
definition of the Taylor spectrum in the proof of the next theorem,
only the fact that the spectral mapping theorem holds for it (and even
this we need only for polynomial mappings; which means that
Theorem~\ref{thm:taylor-spectrum} is valid for the Harte spectrum as
well). That is, if $\sigma(T_1, \dots T_d)$ denotes the Taylor
spectrum of a commuting $d$-tuple of operators $T_1, \dots T_d$, then
for any analytic polynomial $p$ in $d$ variables we have
\begin{equation}\label{eqn:spectral-mapping}
  p(\sigma(T_1, \dots T_d)) =\sigma(p(T_1, \dots T_d)).
\end{equation}
\begin{thm}\label{thm:taylor-spectrum}
  Let $X$ be a contractive solution to the Gleason problem in $\mathcal H(b)$.  Then the Taylor spectrum of $\pi(X)$ contains the unit sphere $\partial \mathbb B^d$.
\end{thm}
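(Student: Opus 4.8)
The plan is to show that for any point $\zeta \in \partial \mathbb{B}^d$, the essential Taylor spectrum of $\pi(\mathbf{X})$ contains $\zeta$ by producing, modulo compact perturbation, an approximate eigenvector obstruction to invertibility of $(\zeta_1 - X_1, \dots, \zeta_d - X_d)$. Rather than working directly with the Koszul complex, I would exploit the spectral mapping property \eqref{eqn:spectral-mapping}: it suffices to exhibit, for each $\zeta \in \partial \mathbb{B}^d$, a single polynomial $p$ (depending on $\zeta$) such that $p(\zeta) \in \sigma(p(\pi(X_1), \dots, \pi(X_d)))$, i.e. $p(\pi(\mathbf{X})) - p(\zeta)$ is not invertible in the Calkin algebra. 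The natural choice is the linear functional $p_\zeta(w) = \sum_j \zeta_j^* w_j = \langle w, \zeta\rangle$, so that $p_\zeta(\mathbf{X}) = \sum_j \zeta_j^* X_j =: X_\zeta$, and $p_\zeta(\zeta) = |\zeta|^2 = 1$. Then the theorem reduces to showing that $X_\zeta - I$ is \emph{not} Fredholm on $\mathcal{H}(b)$ — more precisely, that $\pi(X_\zeta) - 1$ is not invertible in the Calkin algebra.

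The key computational input is the action of $\mathbf{X}$ on reproducing kernels from Theorem~\ref{thm:canon-on-kernels}: $X_j k^b_w = w_j^* k^b_w - b(w)^* b_j$. Contracting with $\zeta_j^*$ and writing $\widetilde b_\zeta := \sum_j \zeta_j b_j$, one gets
\begin{equation}
  X_\zeta k^b_w = \left(\sum_j \zeta_j^* w_j\right) k^b_w - b(w)^* \widetilde b_\zeta = \langle w, \zeta\rangle \, k^b_w - b(w)^*\, \widetilde b_\zeta.
\end{equation}
Hence $(X_\zeta - I) k^b_w = \bigl(\langle w,\zeta\rangle - 1\bigr) k^b_w - b(w)^* \widetilde b_\zeta$. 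Now take a sequence $w^{(n)} = r_n \zeta$ with $r_n \uparrow 1$ radially approaching $\zeta$. Then $\langle w^{(n)}, \zeta\rangle - 1 = r_n - 1 \to 0$, while the second term stays bounded in norm (it is $b(r_n\zeta)^*\widetilde b_\zeta$, with $|b(r_n\zeta)| \le 1$). I would then consider the normalized vectors $u_n := k^b_{w^{(n)}}/\|k^b_{w^{(n)}}\|$. The norm $\|k^b_{w^{(n)}}\|^2 = k^b(w^{(n)}, w^{(n)}) = \tfrac{1-|b(r_n\zeta)|^2}{1-r_n^2}$ either tends to $\infty$ (when $\zeta$ is \emph{not} a C-point) or stays bounded (C-point case), so I would split into these two cases.

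In the non–C-point case, $\|k^b_{w^{(n)}}\| \to \infty$, so $(X_\zeta - I)u_n = (r_n - 1)u_n - b(r_n\zeta)^*\widetilde b_\zeta/\|k^b_{w^{(n)}}\| \to 0$ in norm; since the $u_n$ are unit vectors that converge weakly to $0$ (reproducing kernels at points approaching the boundary tend weakly to zero in a space contractively contained in $H^2_d$), the operator $X_\zeta - I$ is not bounded below modulo compacts, hence $\pi(X_\zeta) - 1$ is not invertible in the Calkin algebra, and \eqref{eqn:spectral-mapping} gives $1 = p_\zeta(\zeta) \in \sigma(\pi(X_\zeta))$, forcing $\zeta \in \sigma(\pi(\mathbf{X}))$. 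In the C-point case, by Theorem~\ref{thm:JC} the function $k^b_\zeta \in \mathcal{H}(b)$ and (arguing as in the proof of Theorem~\ref{thm:eigenvalues} but with $\alpha = b(\zeta)$, using $\langle k^b_\zeta, \widetilde b_\zeta\rangle = b(\zeta)^* - b(0)^*$ from \eqref{eq:2}) one checks directly that $X_\zeta k^b_\zeta = k^b_\zeta$ — i.e. $X_\zeta - I$ has a genuine kernel — but this alone does not give non-Fredholmness, so here one instead uses that $k^b_{r_n\zeta} \to k^b_\zeta$ strongly while remaining an approximate eigenvector sequence in the following sense: passing to $(X_\zeta - I)(k^b_{r_n\zeta} - k^b_\zeta) \to 0$ won't immediately help, so the cleaner route is to observe that in the C-point case $X_\zeta^*$ has $k^b_\zeta$ as an eigenvector with eigenvalue $1$ as well (from $X_j^* k^b_\zeta = \zeta_j k^b_\zeta$, which follows from the formula \eqref{eqn:func-model-kernel} $k^b_z = (I - \mathbf{A}^* z^*)^{-1}k^b_0$ specialized appropriately), so $1$ is an honest eigenvalue of both $X_\zeta$ and $X_\zeta^*$; but a single eigenvalue is still not enough for non-Fredholmness, so in fact the correct argument in \emph{both} cases is simply that $X_\zeta - I$ fails to be bounded below — in the C-point case because it has nontrivial kernel spanned by $k^b_\zeta$, which combined with $k^b_{r_n\zeta} \to k^b_\zeta$ weakly...

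Let me restate the C-point case more carefully: there, $k^b_\zeta \in \ker(X_\zeta - I)$ is a \emph{fixed} nonzero vector, and to get non-invertibility in the Calkin algebra I would show $\ker(X_\zeta - I)$ is infinite-dimensional, or alternatively that $\mathrm{ran}(X_\zeta - I)$ is not closed, by exhibiting a sequence $v_n$ with $v_n \not\to 0$, $\mathrm{dist}(v_n, \ker(X_\zeta-I)) \to$ something nonzero, yet $(X_\zeta - I)v_n \to 0$ — and the radial kernels $k^b_{r_n\zeta}/\|k^b_{r_n\zeta}\|$, suitably corrected, still do this job since $(X_\zeta - I)k^b_{r_n\zeta} = (r_n-1)k^b_{r_n\zeta} - b(r_n\zeta)^*\widetilde b_\zeta$ and subtracting off the correct multiple of $k^b_\zeta$ kills the bounded term in the limit.

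\textbf{The main obstacle} I anticipate is the C-point case: having a single eigenvector $k^b_\zeta$ is \emph{not} enough to conclude that $\pi(X_\zeta) - 1$ is non-invertible in the Calkin algebra, since a compact perturbation can remove an isolated simple eigenvalue. The honest fix is to show that in \emph{every} case (C-point or not) the operator $X_\zeta - I$ is not bounded below modulo finite-rank operators, which one extracts from the kernel formula by writing $(X_\zeta - I)k^b_{r_n\zeta} = (r_n - 1)k^b_{r_n\zeta} + b(r_n\zeta)^*\widetilde b_\zeta$ and noting the right-hand side lives, up to an $o(1)$ error, in the fixed two-dimensional space $\mathrm{span}\{\widetilde b_\zeta, \text{limit of } k^b_{r_n\zeta}\}$ while the $k^b_{r_n\zeta}$ themselves (after an appropriate normalization that behaves well in both regimes, e.g. dividing by $\max(1, \|k^b_{r_n\zeta}\|)$) do not converge into any finite-dimensional subspace; this forces $\mathrm{ran}(\pi(X_\zeta) - 1)$ to be a proper (indeed non-closed-preimage) subspace, so $\pi(X_\zeta) - 1$ is not invertible. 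An alternative, perhaps cleaner, approach to circumvent this entirely is to replace the scalar functional $p_\zeta$ with the $\mathbb{C}^d$-valued affine map $w \mapsto \zeta - w$ and argue that the Koszul complex of $(\zeta_j - X_j)$ is not exact modulo compacts directly from the kernel formula — the point being that $k^b_w \to 0$ weakly as $w \to \zeta$ from inside the ball, so the compression of the Gleason identity $f(z) - f(0) = \sum z_j (X_j f)(z)$ evaluated along reproducing kernels exhibits the boundary sphere as an obstruction; but working out the Koszul-complex details is more than a sketch warrants, so I would commit to the bounded-below-modulo-finite-rank argument above.
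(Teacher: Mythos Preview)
Your non--C-point argument is essentially correct and is, in contrapositive form, precisely the content of the paper's Lemma~\ref{lem:closed-range-cpoint}: when $\|k^b_{r_n\zeta}\|\to\infty$, the normalized radial kernels form a weakly null singular sequence for $X_\zeta-I$, so $\zeta$ lies in the essential spectrum. (The weak nullity follows since $\langle k^b_w,u_n\rangle = \overline{k^b_w(r_n\zeta)}/\|k^b_{r_n\zeta}\|\to 0$ for each fixed interior $w$, and such kernels span.)

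The C-point case, however, is a genuine gap, and none of your proposed patches closes it. When $\zeta$ is a C-point the radial kernels $k^b_{r_n\zeta}$ converge \emph{in norm} to $k^b_\zeta\in\mathcal H(b)$, so the normalized sequence is not weakly null --- it converges to an honest kernel vector of $X_\zeta-I$. A one-dimensional kernel does not obstruct Fredholmness; a compact (even rank-one) perturbation can remove it. Your claim that ``the $k^b_{r_n\zeta}$ do not converge into any finite-dimensional subspace'' is false exactly in the C-point case, and the vague appeal to the Koszul complex does not supply the missing idea.

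The paper's resolution is fundamentally different in spirit: one does \emph{not} try to handle C-points individually. Instead, (i) the essential resolvent is open, so if it met $\partial\mathbb B^d$ it would contain a nonempty open $\Gamma\subset\partial\mathbb B^d$; (ii) by the closed-range lemma (equivalently, your non--C-point argument), every $\zeta\in\Gamma$ would then be a C-point; (iii) but by a theorem of Rudin on bounded holomorphic functions in the ball (a genuinely higher-dimensional phenomenon), no open subset of the sphere can consist entirely of C-points --- there is always a dense $G_\delta$ in $\Gamma$ where the radial values $\{b(r_j\zeta)\}$ are dense in the disk, forcing the angular derivative to blow up. This function-theoretic input is the real content of the theorem and is entirely absent from your proposal. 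Note that even if you tried to salvage your approach by saying ``non--C-points are dense, and the essential spectrum is closed,'' you would still need Rudin's theorem to establish that density.
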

In one variable, Sarason proves in \cite[Theorem
V-8]{Sar-book} that an open arc $I\subset \mathbb T$ lies in the
resolvent set of $X^*$ if and only if every function in $\mathcal
H(b)$ can be analytically continued across $I$. In higher dimensions,
our result says that this is still true, though in a vacuous way: the
spectrum of $\pi({\bf X})$ contains the entire sphere, and it will
turn out that there is no open set of $\partial \mathbb B^d$ across which all $f\in
\mathcal H(b)$ can be continued.

We begin with two lemmas; it is the second lemma that does most of the work. 
\begin{lem}\label{lem:resolvent-mapping}
  A point $\zeta\in\mathbb B^d$ belongs to the Taylor spectrum of
  $(T_1, \dots T_d)$ if and only if $(I-{\bf T}\zeta^*)$ is not invertible. 
\end{lem}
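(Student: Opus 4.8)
The plan is to derive the lemma from the polynomial spectral mapping theorem (\ref{eqn:spectral-mapping}). In the situation where it is applied, $\mathbf T=\pi(\mathbf X)$ for a contractive solution $\mathbf X$ of the Gleason problem, so the relevant spectrum is the essential Taylor spectrum; the only inputs I will need are (\ref{eqn:spectral-mapping}) and the fact that $\sigma(\mathbf T)$ is a compact subset of $\overline{\mathbb B^d}$, which holds because $\pi(\mathbf X)^*$ is a commuting row contraction in the Calkin algebra by (\ref{eqn:contractive-gleason-def}). First I would apply (\ref{eqn:spectral-mapping}) to the affine polynomial $p(t)=1-\sum_{j=1}^d\zeta_j^*t_j$; since $p(\mathbf T)=I-\mathbf T\zeta^*$ this gives
\[
\sigma\bigl(I-\mathbf T\zeta^*\bigr)=\Bigl\{\,1-\sum_{j=1}^d\zeta_j^*\lambda_j:\ (\lambda_1,\dots,\lambda_d)\in\sigma(\mathbf T)\,\Bigr\},
\]
so that $I-\mathbf T\zeta^*$ fails to be invertible precisely when $\sigma(\mathbf T)$ meets the affine hyperplane $\{t:\sum_j\zeta_j^*t_j=1\}$.

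When $\zeta\in\mathbb B^d$ this hyperplane is disjoint from $\overline{\mathbb B^d}$: for $|\lambda|\le 1$, Cauchy--Schwarz gives $\bigl|\sum_j\zeta_j^*\lambda_j\bigr|\le|\zeta|\,|\lambda|\le|\zeta|<1$. Hence $I-\mathbf T\zeta^*$ is invertible for \emph{every} $\zeta\in\mathbb B^d$ --- as one also sees directly from $\|\mathbf T\zeta^*\|\le|\zeta|<1$. Thus the right-hand condition in the lemma holds for no interior point, and what remains is the matching statement $\sigma(\mathbf T)\cap\mathbb B^d=\varnothing$; granting it, both sides of the asserted equivalence are vacuously false on $\mathbb B^d$, which is the lemma.

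To establish $\sigma(\mathbf T)\cap\mathbb B^d=\varnothing$ I would fix $\zeta\in\mathbb B^d$ and show that $(X_1-\zeta_1,\dots,X_d-\zeta_d)$ is a Fredholm tuple, i.e.\ that $\mathbf X-\zeta$ is Taylor-invertible modulo the compacts. Formula (\ref{eqn:canon-on-kernels}) of Theorem~\ref{thm:canon-on-kernels} presents each $X_j$ on the reproducing kernels as $k^b_w\mapsto\overline{w_j}\,k^b_w$ plus a rank-one term, which (because the coordinate backward shifts commute) is what makes the commutators $[X_i,X_j]$ finite rank. To get Fredholmness of $\mathbf X-\zeta$ at an interior point I would then use the functional-model identity $k^b_z=(I-\mathbf X^*z^*)^{-1}k^b_0$ of (\ref{eqn:func-model-kernel}) to realize $\mathcal H(b)$, with the tuple $\mathbf X$, inside $H^2_d\otimes\mathcal D$ for a suitable coefficient space $\mathcal D$, reducing the question to the Fredholm theory of the $d$-shift and its co-invariant subspaces; since the essential Taylor spectrum of $M_z$ on $H^2_d$ is $\partial\mathbb B^d$ (Arveson), one expects $\mathbf X-\zeta$ to be Fredholm for every $\zeta\in\mathbb B^d$.

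I expect this last reduction to be the main obstacle: one must control the \emph{whole} essential Koszul complex of $\mathbf X-\zeta$ at an interior point, not merely its two ends. The ends are handled by a Gramian estimate, since $\sum_j(X_j-\zeta_j)^*(X_j-\zeta_j)=\sum_jX_j^*X_j-\bigl(\mathbf X\zeta^*+(\mathbf X\zeta^*)^*\bigr)+|\zeta|^2I$ is bounded below modulo $k^b_0\otimes k^b_0$ and the Gleason defect $I-\sum_jX_j^*X_j-k^b_0\otimes k^b_0$ once $\|\mathbf X\zeta^*\|\le|\zeta|$ is fed in; but that defect need not be compact for a general contractive solution, and the coefficient space $\mathcal D$ above may be infinite-dimensional. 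In the extremal case, however, $b$ is quasi-extreme, $\sum_jX_j^*X_j=I-k^b_0\otimes k^b_0$ by (\ref{eqn:extremal-gleason-def}), the defect vanishes, and the Gramian estimate alone closes the argument.
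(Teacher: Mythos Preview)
Your opening move---applying the spectral mapping theorem to $p(t)=1-\sum_j\overline{\zeta_j}\,t_j$---is exactly the paper's proof. The trouble is that you then take ``$\zeta\in\mathbb B^d$'' literally and spend the rest of the proposal arguing that both sides of the equivalence are vacuously false for interior $\zeta$. This is a typo in the statement: look at how the lemma is actually invoked in the proof of Theorem~\ref{thm:taylor-spectrum}, where $\zeta_0\in\partial\mathbb B^d$, and at Lemma~\ref{lem:closed-range-cpoint}, whose proof writes $z=r\zeta$ with $r<1$. The intended hypothesis is $\zeta\in\partial\mathbb B^d$ (together with the implicit containment $\sigma(\mathbf T)\subset\overline{\mathbb B^d}$, which you already observed).

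With that reading, your first display already finishes the argument once one adds the equality case of Cauchy--Schwarz. If $|\zeta|=1$ and $\zeta\in\sigma(\mathbf T)$, then $1-\zeta\zeta^*=0\in\sigma(I-\mathbf T\zeta^*)$. Conversely, if $1-\lambda\zeta^*=0$ for some $\lambda\in\sigma(\mathbf T)\subset\overline{\mathbb B^d}$, then $|\lambda\zeta^*|=1=|\lambda|\,|\zeta|$ forces equality in Cauchy--Schwarz, hence $\lambda=\zeta$. The paper suppresses this last step and simply calls the result ``immediate'' from spectral mapping. Your detour into proving $\sigma(\pi(\mathbf X))\cap\mathbb B^d=\varnothing$ is aiming at a different and harder statement that the lemma does not need; and as you yourself concede, the Gramian estimate in your sketch controls only one end of the Koszul complex, not exactness in the middle, so that argument would remain incomplete even in the extremal case.
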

\begin{proof}
  This follows immediately from the spectral mapping property (\ref{eqn:spectral-mapping}) applied to $T$ and the polynomial $p(z)=1-z\zeta^*$.  
\end{proof}

\begin{lem}\label{lem:closed-range-cpoint}
  Let ${\bf X}$ be any contractive solution to the Gleason problem in
  $\mathcal H(b)$ and let $\zeta\in\mathbb B^d$.  If $I-\zeta{\bf X}^*$ has closed range, then $\zeta$ is a C-point for $b$. 
\end{lem}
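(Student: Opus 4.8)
The plan is to argue by contradiction: suppose $I-\zeta{\bf X}^*$ has closed range but $\zeta\in\partial\mathbb B^d$ is not a $C$-point for $b$. The two inputs are the realization formula \eqref{eqn:func-model-kernel}, which for any contractive solution ${\bf X}$ of the Gleason problem gives $(I-r\,\zeta{\bf X}^*)\,k^b_{r\zeta}=k^b_0$ for $0\le r<1$, together with the formula \eqref{eqn:X*-formula} for the action of the $X_j^*$.

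First I would use \eqref{eqn:X*-formula} to show that $\ker(I-\zeta{\bf X}^*)$ is at most one-dimensional. If $v$ lies in this kernel, then $\sum_j\zeta_jX_j^*v=v$; substituting $X_j^*v(z)=z_jv(z)-\langle v,b_j\rangle_{\mathcal H(b)}\,b(z)$ yields
\[
  v(z)\Bigl(\textstyle\sum_j\zeta_jz_j-1\Bigr)=\Bigl(\textstyle\sum_j\zeta_j\langle v,b_j\rangle_{\mathcal H(b)}\Bigr)b(z).
\]
Since $1-\sum_j\zeta_jz_j$ is zero-free on $\mathbb B^d$, every such $v$ is a scalar multiple of the single holomorphic function $b(z)/\bigl(1-\sum_j\zeta_jz_j\bigr)$, so indeed $\dim\ker(I-\zeta{\bf X}^*)\le1$. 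Under the closed-range hypothesis this makes $T:=I-\zeta{\bf X}^*$ an upper semi-Fredholm operator.

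Next I would produce a weakly null approximate null vector for $T$. Because the radial $\liminf$ is no smaller than the unrestricted one, failure of the $C$-point condition forces $\|k^b_{r\zeta}\|^2=\frac{1-|b(r\zeta)|^2}{1-r^2}\to\infty$ as $r\uparrow1$. Set $u_r=k^b_{r\zeta}/\|k^b_{r\zeta}\|$. From $(I-r\,\zeta{\bf X}^*)k^b_{r\zeta}=k^b_0$, together with $I-r\,\zeta{\bf X}^*\to T$ in norm and $\|k^b_0\|/\|k^b_{r\zeta}\|\to0$, one gets $\|Tu_r\|\to0$; and $u_r\rightharpoonup0$ weakly, since for each fixed $w\in\mathbb B^d$ the numbers $k^b_{r\zeta}(w)$ stay bounded (by $2/(1-|w|)$) while $\|k^b_{r\zeta}\|\to\infty$, and the $k^b_w$ are total in $\mathcal H(b)$. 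Since $T$ is upper semi-Fredholm, $\operatorname{dist}(u_r,\ker T)\le C\|Tu_r\|\to0$, so the orthogonal projections $v_r$ of $u_r$ onto the finite-dimensional space $\ker T$ satisfy $\|v_r\|\to1$ while $v_r\rightharpoonup0$; as $\ker T$ is finite-dimensional (indeed at most a line), weak and norm convergence coincide on it, so $v_r\to0$ in norm, contradicting $\|v_r\|\to1$. Hence $\zeta$ must be a $C$-point for $b$.

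The hard part is this last step: extracting a genuine contradiction from the merely topological hypothesis that $I-\zeta{\bf X}^*$ has closed range. It works only because the Gleason structure confines $\ker(I-\zeta{\bf X}^*)$ to an at most one-dimensional space --- that is the nonobvious ingredient --- after which the semi-Fredholm distance estimate and the weak nullity of the normalized reproducing kernels finish the job. The blow-up $\|k^b_{r\zeta}\|\to\infty$ and the algebraic identification of the kernel are routine, and the whole argument uses only that ${\bf X}$ is \emph{some} contractive solution to the Gleason problem, not quasi-extremeness of $b$.
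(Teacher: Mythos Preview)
Your argument is correct and takes a genuinely different route from the paper's. The paper works on the \emph{adjoint} side: it shows that every $f\in\ker(I-\zeta^*{\bf X})$ satisfies $f(0)=0$, by iterating the Gleason identity to obtain $f(r\zeta)=(1-r)^{-1}f(0)$ and invoking the growth bound $|f(z)|=o((1-|z|)^{-1})$ valid for $f\in H^2_d$. Since $k^b_0$ is then orthogonal to this kernel and the range of $I-\zeta{\bf X}^*$ is closed, one may write $k^b_0=(I-\zeta{\bf X}^*)h$, and the resolvent identity $(I-rT)^{-1}(I-T)=I-(1-r)(I-rT)^{-1}T$ with $T=\zeta{\bf X}^*$ yields a uniform bound on $\|k^b_{r\zeta}\|$ directly.

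You instead bound $\ker(I-\zeta{\bf X}^*)$ itself, using the explicit formula \eqref{eqn:X*-formula} for $X_j^*$ (which the paper does not call on at this point), and then finish with a soft semi-Fredholm argument: the normalized kernels $u_r$ are weakly null approximate null vectors, which is impossible when the null space is at most a line. Your route avoids the $H^2_d$ growth estimate entirely and packages the operator theory cleanly; the paper's route is more constructive and actually produces the limiting bound on $\|k^b_{r\zeta}\|$ rather than arguing by contradiction. Both arguments use only that ${\bf X}$ is some contractive Gleason solution, not quasi-extremeness.
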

\begin{proof}
  Notice that the quantity in the definition of C-point
  (\ref{eqn:cpoint-def}) is nothing but $\|k^b_z\|^2$.  Now from the
  expression for the reproducing kernel in terms of ${\bf X}$, we have
  \begin{equation}\label{eqn:kernel-resolvent}
    k^b_z = (I-z^*{\bf X}^*)^{-1}k^b_0.
  \end{equation}
First we show that if $I-\zeta{\bf X}^*$ has closed range, then its
range contains $k^b_0$.  For this it suffices to show that $k^b_0$ is
always orthogonal to the kernel of $I-\zeta^* {\bf X}$, or what is the
same, that if $f\in\ker(I-\zeta^*{\bf X})$, then $f(0)=0$.  To see this, for such $f$ we have
\begin{equation}\label{eqn:f-ker-first}
  f(z) = \sum_{j=1}^d \zeta_j^* (X_jf)(z)
\end{equation}
so in particular
\begin{equation}
  f(0)=\sum_{j=1}^d \zeta_j^* (X_jf)(0).
\end{equation}
Now apply $\zeta_k^*X_k$ to (\ref{eqn:f-ker-first}), sum over $k$, and evaluate at $z=0$.  We get
\begin{equation}
  f(0)=\sum_{k=1}^d \zeta_k^* X_kf)(0) = \sum_{j,k=1}^d \zeta_k^*\zeta_j^* (X_kX_jf)(0).
\end{equation}
Continuing in this manner, we see that for each integer $m\geq 0$ we have
\begin{equation}
  f(0) =\sum_{|\nn|=m} \zeta^{\nn *} (X^{(\nn)}f)(0).
\end{equation}
Using the Taylor expansion for $f$ in terms of the $X$'s, we conclude that for this $\zeta$ and all $0\leq r<1$,
\begin{equation}\label{eqn:growth-of-f}
  f(r\zeta) =\sum_{m=0}^\infty r^m \sum_{|\nn|=m} \zeta^{\nn *} (X^{(\nn)}f)(0) =(1-r)^{-1} f(0).
\end{equation}
But $f$ belongs to $\mathcal H(b)$ and hence also to $H^2_d$, so it must satisfy the estimate
\begin{equation}
  |f(z)| = o((1-|z|)^{-1}) \quad \text{as }|z|\to 1.
\end{equation}
This is only possible in (\ref{eqn:growth-of-f}) if $f(0)=0$.

So, assuming $(I-\zeta^*X)$ has closed range, we conclude that there exists a function $h\in\mathcal H(b)$ so that $k^b_0=(I-\zeta{\bf X}^*)h$. Substitute this into the expression (\ref{eqn:kernel-resolvent}), and let $z=r\zeta$ for $r<1$.  Then
\begin{equation}\label{eqn:closed-range-lemma-cpoint}
  k^b_z =(I-r\zeta{\bf X}^*)^{-1} (I-\zeta{\bf X}^*)h.
\end{equation}
Now if $T$ is any contractive operator, one easily checks that
\begin{equation}
  (I-rT)^{-1}(I-T) = I-(1-r)(I-rT)^{-1}T,
\end{equation}
and that $\|(I-rT)^{-1} \|=O((1-r)^{-1})$.  Applying this to $T=\zeta{\bf X}^*$, we see from (\ref{eqn:closed-range-lemma-cpoint}) that $\|k^b_z\|$ stays bounded as $z\to \zeta$ along a radius, and hence $\zeta$ is a C-point for $b$.
\end{proof}
The last ingredient we need is the following result on the boundary behavior of bounded analytic functions in the ball, due to Rudin \cite[Theorem 1.2]{Rud2}.
\begin{thm}\label{thm:rudin}
  Suppose that 
  \begin{itemize}
  \item $\Gamma$ is a nonempty open set in $\partial \mathbb B^d$, 
  \item $r_j$ increases to $1$ as $j\to \infty$, 
  \item $f$ is a nonconstant holomorphic function bounded by $1$ in $\mathbb B^d$, and $\lim_{r\to 1}|f(r\zeta)|=1$ for a.e. $\zeta\in\Gamma$.
  \end{itemize}
Then $\Gamma$ has a dense $G_\delta$ subset $H$ such that the set
\begin{equation}
  \{f(r_j\zeta):j=1,2,3\dots\}
\end{equation}
is dense in the unit disk for every $\zeta\in H$.  
\end{thm}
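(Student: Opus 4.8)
The plan is to prove this by a Baire-category argument, in which the only substantive ingredient is a density statement about certain open subsets of $\Gamma$; that density statement is essentially the cited theorem of Rudin, and below I indicate the ingredients behind it.

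First I would make the $G_\delta$ structure explicit. Fix a countable dense set $\{w_k\}_{k\ge1}\subseteq\mathbb{D}$, a sequence $\epsilon_m\downarrow 0$, and for $k,m,l\in\mathbb{N}$ set
\begin{equation}
  V_{k,m,l} := \bigcup_{j\ge l}\bigl\{\,\zeta\in\Gamma : |f(r_j\zeta)-w_k|<\epsilon_m\,\bigr\}.
\end{equation}
For each fixed $j$ the map $\zeta\mapsto f(r_j\zeta)$ is continuous on $\partial\mathbb{B}^d$ (since $r_j<1$, so $r_j\zeta\in\mathbb{B}^d$, where $f$ is holomorphic, hence continuous), so each $V_{k,m,l}$ is open. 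Put $H:=\bigcap_{k,m,l}V_{k,m,l}$, a $G_\delta$ subset of $\Gamma$. Then $\zeta\in H$ exactly when every $w_k$ is a cluster point of the sequence $\bigl(f(r_j\zeta)\bigr)_{j\ge1}$, which---since $\{w_k\}$ is dense in $\mathbb D$---is the same as saying that $\{f(r_j\zeta):j\ge1\}$ is dense in the unit disk. As $\Gamma$ is an open subset of the complete metric space $\partial\mathbb{B}^d$, hence a Baire space, $H$ will be dense in $\Gamma$ once we know each $V_{k,m,l}$ is dense in $\Gamma$. This reduction is routine; all the difficulty is in the density.

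So fix a nonempty open $\Gamma'\subseteq\Gamma$, a point $w\in\mathbb{D}$, and $\epsilon>0$, $l\in\mathbb{N}$; the task is to find $j\ge l$ and $\zeta\in\Gamma'$ with $|f(r_j\zeta)-w|<\epsilon$. Composing $f$ with the automorphism $\varphi_w(\lambda)=(\lambda-w)/(1-\overline{w}\lambda)$ of $\mathbb{D}$ replaces $f$ by $g:=\varphi_w\circ f$, which is again holomorphic on $\mathbb{B}^d$ and bounded by $1$, is still nonconstant, and still satisfies $\lim_{r\to1}|g(r\zeta)|=1$ for a.e.\ $\zeta\in\Gamma$ (because $\varphi_w$ maps $\partial\mathbb{D}$ into itself and is uniformly continuous on $\overline{\mathbb{D}}$); and since $\varphi_w(D(w,\epsilon))$ is an open neighbourhood of $0$, it reduces matters to showing
\begin{equation}
  \inf\bigl\{\,|g(r_j\zeta)| : j\ge l,\ \zeta\in\Gamma'\,\bigr\}=0,
\end{equation}
i.e.\ that the transplanted function $g$ attains arbitrarily small modulus on the part of the spheres $|z|=r_j$ ($j\ge l$) lying over $\Gamma'$. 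This is the content of \cite[Theorem 1.2]{Rud2}. The idea is that the hypothesis $|g^*|=1$ a.e.\ on the \emph{open} set $\Gamma$ makes $g$ behave like an inner function over $\Gamma$, and in particular $g$ admits no holomorphic continuation across any point of $\Gamma$ (a continuation, together with $|g|\equiv 1$ on an open piece of $\partial\mathbb{B}^d$, would force $g$ constant by the maximum principle, contrary to hypothesis). One then invokes the boundary-value theory of bounded holomorphic functions on the ball---Nevanlinna/inner--outer factorization, the Lindel\"of-type theorems on admissible (Kor\'anyi) limits, and a Plessner-type dichotomy---to conclude that the admissible cluster set of $g$ is all of $\overline{\mathbb{D}}$ at every point of a dense $G_\delta$ subset of $\Gamma'$, and then, using the continuity of $g$ in the interior together with the accumulation of the $r_j$ at $1$, extracts a value $g(r_j\zeta)$ of modulus $<\epsilon$ for some admissible $j\ge l$ and some $\zeta\in\Gamma'$ near such a point. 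I expect this last step to be the main obstacle: the sequence $r_j$ is prescribed in advance and may be extremely sparse, so one cannot simply evaluate along a convenient sequence of radii; the argument must use that $\zeta$ is chosen only after the $r_j$ are fixed, together with quantitative control of the oscillation of $g$ on the individual spheres $|z|=r_j$ near the generic boundary points furnished by the cluster-set statement.
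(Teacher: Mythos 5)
The result you are asked to prove is one the paper itself does not prove: it is quoted verbatim from Rudin and used as a black box (the paper's ``proof'' is the citation \cite[Theorem 1.2]{Rud2}). Your reduction is correct and is in fact the architecture of Rudin's own argument: the sets $V_{k,m,l}$ are open, $H=\bigcap V_{k,m,l}$ is exactly the set of $\zeta$ for which $\{f(r_j\zeta)\}$ is dense in $\mathbb D$, an open subset of $\partial\mathbb B^d$ is a Baire space, and the M\"obius transplant $g=\varphi_w\circ f$ correctly converts ``$f(r_j\zeta)$ enters $D(w_k,\epsilon_m)$'' into ``$|g(r_j\zeta)|$ is small'' while preserving all three hypotheses. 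Up to that point everything is sound.

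The gap is that the one nontrivial assertion --- that for every nonempty open $\Gamma'\subseteq\Gamma$ and every $l$ one can find $j\geq l$ and $\zeta\in\Gamma'$ with $|g(r_j\zeta)|<\epsilon$ --- is never established. Writing ``this is the content of \cite[Theorem 1.2]{Rud2}'' at that point is circular: that density statement, fed back through your Baire-category frame, \emph{is} the theorem being proved, so you cannot invoke the theorem to supply it. The sketch that follows (inner--outer factorization, Lindel\"of, a Plessner dichotomy, then ``extract a value of modulus $<\epsilon$'') does not close the argument, and you say so yourself: the sequence $(r_j)$ is prescribed in advance and may be arbitrarily sparse, so knowing that the full radial or admissible cluster set of $g$ is $\overline{\mathbb D}$ at residually many $\zeta$ does not by itself produce a small value \emph{on one of the designated spheres} $|z|=r_j$ with $j\geq l$. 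That is precisely the step where Rudin has to work (his argument runs through the observation that if $|g(r_j\zeta)|\geq\delta$ for all $j\geq l$ and all $\zeta\in\Gamma'$, then the negative plurisubharmonic function $\log|g|$, whose boundary values vanish a.e.\ on $\Gamma$, is forced by a harmonic-majorant/localization argument to be bounded below near $\Gamma'$, which contradicts nonconstancy via the maximum principle), and none of that appears in your proposal. As written, the proposal is a correct reduction plus an acknowledged hole where the theorem actually lives; either supply that density argument or, as the paper does, cite Rudin for the whole statement rather than for its key lemma.
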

In particular, under the conditions of this theorem we see that 
\begin{equation}\label{eqn:derivs-blow-up}
  \limsup_{r\to 1}|(D_\zeta f)(r\zeta)|=+\infty \quad \text{for every }\zeta\in H.
\end{equation}
\begin{proof}[Proof of Theorem~\ref{thm:taylor-spectrum}]
  We suppose $\zeta_0\in\partial\mathbb B^d$ does not lie in the joint
  spectrum of $\pi(X)$ and derive a contradiction. If this were the
  case, then by Lemma~\ref{lem:resolvent-mapping} the element
  $I-\zeta_0^*{\bf X}$ would be invertible modulo compacts, as would $I-\zeta{\bf X}^*_0$, and hence there would exist an open set $\Gamma\subset\partial\mathbb B^d$ containing $\zeta_0$ for which $I-\zeta{\bf X}^*$ was invertible modulo compacts for every $\zeta\in\Gamma$.  In particular, each of the operators $I-\zeta{\bf X}^*$ would be Fredholm and hence have closed range. Thus by Lemma~\ref{lem:closed-range-cpoint}, each $\zeta\in\Gamma$ would be a C-point for $b$, and thus $b$ and $\Gamma$ would satisfy the hypotheses of Theorem~\ref{thm:rudin} for any sequence $r_j\to 1$, but also $\lim_{r\to 1} (D_\zeta f)(r\zeta)$ would exist and be finite for each $\zeta\in\Gamma$. This obviously contradicts (\ref{eqn:derivs-blow-up}).
\end{proof}

\bibliographystyle{plain} 
\bibliography{working-ncac} 

\end{document}